\newtheorem{theorem}{Theorem}[section]
\newtheorem{lemma}[theorem]{Lemma}
\newtheorem{corollary}[theorem]{Corollary}
\newtheorem{proposition}[theorem]{Proposition}
\theoremstyle{definition}
\theoremstyle{problem}
\theoremstyle{remark}
\numberwithin{equation}{section}
\newcommand{\Aut}{\mbox{\rm Aut}}
\newcommand{\orb}{\mbox{\rm orbit}}
\newcommand{\calK}{\mathcal{K}}
\begin{document}
\title{Orbit equivalence of Cantor minimal systems }
\author{Su Gao}
\address{School of Mathematical Sciences and LPMC, Nankai University, Tianjin 300071, P.R. China}
\email{sgao@nankai.edu.cn}
\thanks{The first author acknowledges the partial support of his research by the National Natural Science Foundation of China (NSFC) grants 12271263 and 12250710128.}
\author{Ruiwen Li}
\address{School of Mathematical Sciences and LPMC, Nankai University, Tianjin 300071, P.R. China}
\email{rwli@mail.nankai.edu.cn}
\thanks{The second author acknowledges the partial support of his research by the National Natural Science Foundation of China (NSFC) grant 124B2001.}
\author{Yiming Sun}
\address{School of Mathematical Sciences and LPMC, Nankai University, Tianjin 300071, P.R. China}
\email{ymsun@mail.nankai.edu.cn}
\begin{abstract} In this paper we study the descriptive complexity of the topological orbit equvalence relation for some Borel classes of Cantor minimal systems. Specifically, we study the Borel class of all Cantor minimal systems with only finitely many ergodic measures, and show that the orbit equivalence for this class is Borel bireducible with the equivalence relation $=^+$. We prove the same for the subclass of regular $\{0,1\}$-Toeplitz subshifts or that of the uniquely ergodic minimal subshifts. We also study the orbit equivalence for the Borel class of minimal subshifts of finite topological rank. Denote by $R_n$ the orbit equivalence for minimal subshifts of topological rank $n\geq 2$. We prove that for any $n\geq 2$, $R_n$ is virtually countable, i.e., Borel reducible to a  countable Borel equivalence relation. Moreover, $R_2$ is virtually amenable. On the other hand, $R_n$ is not smooth when $n\ge 2$, is not virtually hyperfinite when $n\ge 4$,  and is not virtually treeable when $n\ge 5$. For any $n\geq 2$, our contructions yield uniquely ergodic minimal subshifts of topological rank exactly $n$.
\end{abstract}

\maketitle

\section{Introduction}

In this paper we study the complexity of the topological orbit equivalence relation on some Borel classes of Cantor minimal systems from the point of view of descriptive set theory. 

In descriptive set theory, equivalence relations on standard Borel spaces are compared to each other via the notion of Borel reducibility, thus giving a sense of relative complexity of these equivalence relations. More specifically, if $E, F$ are equivalence relations on standard Borel spaces $X, Y$, respectively, then we say that $E$ is {\em Borel reducible} to $F$, denoted $E\leq_B F$, if there is a Borel map $f\colon X\to Y$ such that for any $x_1, x_2\in X$, 
$$ x_1Ex_2\iff f(x_1)Ff(x_2). $$
We say that $E$ is {\em Borel bireducible} with $F$, denoted $E\sim_B F$, if both $E\leq_B F$ and $F\leq_B E$. Intuitively, if $E\leq_B F$, then we regard the complexity of $E$ to be no more than that of $F$, and if $E\sim_B F$, then $E$ and $F$ are thought of having the same complexity. Given an equivalence relation $E$, the objective of the descriptive set theory is to find a well-understood equivalence relation $F$ and establish that $E\sim_B F$; this would completely determine the complexity of $E$. Short of doing this, the next best thing is to find some upper and lower bounds for the complexity of $E$, namely to find some well-understood equivalence relations $F_1$ and $F_2$ and establish $F_1\leq_B E \leq_B F_2$; this would also give much information about the complexity of $E$. For an account of the descriptive set theory of equivalence relations, as well as the definitions of many well-understood equivalence relations, known as {\em benchmark} equivalence relations, the reader is referred to \cite{GaoBook}.

The equivalence relations we study in this paper are the orbit equivalence for various classes of Cantor minimal systems. A {\em Cantor system} is a pair $(X, \varphi)$, where $X$ is a Cantor set and $\varphi\colon X\to X$ is an autohomeomorphism on $X$. For an autohomeomorphism $\varphi\colon X\to X$ and $x\in X$, the {\em orbit} of $x$ under $\varphi$, denoted $\orb_\varphi(x)$, is the set $\{\varphi^k(x)\colon k\in\mathbb{Z}\}$. $\varphi$ is {\em minimal} if for any $x\in X$, $\orb_\varphi(x)$ is dense in $X$. If $\varphi$ is minimal, the Cantor system $(X, \varphi)$ is called a {\em Cantor minimal system}. Given two Cantor minimal systems $(X, \varphi)$ and $(Y, \psi)$, we say that $(X, \varphi)$ and $(Y, \psi)$ are {\em orbit equivalent} if there is a homeomorphism $f\colon X\to Y$ such that for all $x\in X$, 
$$ f(\orb_\varphi(x))=\orb_\psi(f(x)). $$

Fundamental work on the orbit equivalence of all Cantor minimal systems has been done by Giordano, Putnam and Skau \cite{GPS}. In particular, they relate the orbit equivalence to the invariant probability measures of the Cantor minimal systems in the following celebrated theorem.

\begin{theorem} [Giordano--Putnam--Skau {\cite[Theorem 2.2]{GPS}}]\label{thm:GPS} Let $(X, \varphi)$ and $(Y, \psi)$ be Cantor minimal systems. Then the following are equivalent:
\begin{enumerate}
\item[\rm (1)] $(X, \varphi)$ and $(Y, \psi)$ are orbit equivalent.
\item[\rm (2)] The dimension groups $K^0(X, \varphi)/\mbox{Inf}(K^0(X, \varphi))$ and $K^0(Y,\psi)/\mbox{Inf}(K^0(Y,\psi))$ are isomorphic as dimension groups with order units.
\item[\rm (3)] There is a homeomorphism $f\colon X\to Y$ carrying the $\varphi$-invariant probability measures on $X$ onto the $\psi$-invariant probability measures on $Y$.
\end{enumerate}
\end{theorem}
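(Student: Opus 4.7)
The plan is to establish the three equivalences cyclically, $(1)\Rightarrow(3)\Rightarrow(2)\Rightarrow(1)$, with the last implication carrying essentially all of the depth. The main tools throughout will be the full-group characterization of invariant measures, the Choquet-simplex representation of the dimension group modulo infinitesimals, and the Bratteli--Vershik model of Cantor minimal systems, which together form the bridge between the dynamical, measure-theoretic, and $K$-theoretic data.

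For $(1)\Rightarrow(3)$, I would argue that an orbit-equivalence homeomorphism $f\colon X\to Y$ sends $\varphi$-invariant probability measures to $\psi$-invariant probability measures via push-forward. The clean way to see this is to invoke the classical fact that, on a Cantor minimal system, a Borel probability measure is $\varphi$-invariant if and only if it is invariant under the entire full group $[\varphi]$ of homeomorphisms preserving $\varphi$-orbits. Orbit equivalence induces an isomorphism $[\varphi]\cong[\psi]$ by conjugation, so push-forward yields a bijection between the invariant probability measure simplices. For $(3)\Rightarrow(2)$, I would use the description of $K^0(X,\varphi)/\mbox{Inf}(K^0(X,\varphi))$ as a unital ordered subgroup of $\mbox{Aff}(M_\varphi(X))$, the continuous affine functions on the Choquet simplex of $\varphi$-invariant probability measures, realized by the evaluation map $[g]\mapsto\bigl(\mu\mapsto\int g\,d\mu\bigr)$ for $g\in C(X,\mathbb{Z})$; a measure-preserving homeomorphism then induces an affine homeomorphism of the simplices, which by pull-back yields the required isomorphism of dimension groups with order units.

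The serious work lies in $(2)\Rightarrow(1)$. My approach would be: (a) represent both $(X,\varphi)$ and $(Y,\psi)$ as Bratteli--Vershik systems on properly ordered Bratteli diagrams $B_1,B_2$; (b) use the given isomorphism of dimension groups modulo infinitesimals to produce, via telescoping, splitting, and reordering of edges, a common combinatorial refinement of $B_1$ and $B_2$ whose associated Vershik maps agree except possibly on a single pair of orbits; (c) extract the orbit equivalence by matching the resulting Kakutani--Rokhlin towers level by level. The principal obstacle I expect is step (b): orbit equivalence (as opposed to strong orbit equivalence) permits two points of discontinuity in the associated orbit cocycle, which is precisely the algebraic content of quotienting out $\mbox{Inf}(K^0)$. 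One must therefore track the infinitesimal subgroup carefully through every combinatorial move and show that allowing the Vershik map to be modified on a single orbit corresponds exactly to killing infinitesimals in $K^0$; this is where the Bratteli-diagram bookkeeping becomes heaviest and where any attempt to simplify the GPS argument tends to break down.
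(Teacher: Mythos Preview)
The paper does not contain a proof of this theorem. Theorem~\ref{thm:GPS} is stated in the Introduction as a cited result from Giordano, Putnam and Skau \cite[Theorem~2.2]{GPS}, and is used as a black box throughout the paper (notably in Lemmas~\ref{lem:dim} and~\ref{lem:OEkey}). There is therefore nothing in the paper to compare your proposal against.

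For what it is worth, your outline tracks the architecture of the original GPS argument quite faithfully: the easy implications $(1)\Rightarrow(3)\Rightarrow(2)$ via full groups and the affine representation of $K^0/\mathrm{Inf}$, followed by the hard direction $(2)\Rightarrow(1)$ through Bratteli--Vershik models and careful manipulation of diagrams modulo infinitesimals. Your identification of step~(b) as the crux, and of the infinitesimal subgroup as encoding the extra point of discontinuity that distinguishes orbit equivalence from strong orbit equivalence, is accurate. But since the present paper simply imports the result, none of this is needed here.
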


The class of all Cantor minimal systems is a standard Borel space (see Section~\ref{sec:2} for details). Melleray \cite{Melleray} has completely determined the complexity of the orbit equivalence on these classes with the following result.

\begin{theorem} [Melleray {\cite[Theorem 5.9]{Melleray}}]\label{thm:Melleray} The following equivalence relations are Borel bireducible with each other:
\begin{enumerate}
\item[\rm (i)] The orbit equivalence of all Cantor minimal systems.
\item[\rm (ii)] The orbit equivalence of all Toeplitz subshifts.
\item[\rm (iii)] The orbit equivalence of all $\{0,1\}$-Toeplitz subshifts.
\item[\rm (iv)] The universal orbit equivalence relation $E_{S_\infty}$ induced by an action of $S_\infty$, where $S_\infty$ is the group of all permutations of $\mathbb{N}$.
\end{enumerate}
\end{theorem}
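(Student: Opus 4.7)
The plan is to close a cycle of Borel reductions among (i)--(iv). The inclusions of classes give the trivial reductions (iii) $\leq_B$ (ii) $\leq_B$ (i) at once, provided one verifies that the classes of Toeplitz and of $\{0,1\}$-Toeplitz subshifts form Borel subsets of the standard Borel space of Cantor minimal systems (a routine Borel coding argument). It therefore suffices to establish the two nontrivial reductions (i) $\leq_B$ (iv) and (iv) $\leq_B$ (iii).

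For (i) $\leq_B$ (iv), the plan is to invoke clause (2) of Theorem \ref{thm:GPS}: orbit equivalence of $(X,\varphi)$ and $(Y,\psi)$ is equivalent to isomorphism, as ordered abelian groups with distinguished order unit, of the quotient dimension groups $K^0(X,\varphi)/\mathrm{Inf}(K^0(X,\varphi))$ and $K^0(Y,\psi)/\mathrm{Inf}(K^0(Y,\psi))$. These are countable structures in a fixed countable signature, and isomorphism on any standard Borel space of countable structures is, by a general fact, Borel reducible to $E_{S_\infty}$. What remains is to exhibit a Borel map from the space of Cantor minimal systems to a standard Borel space of countable ordered groups with order unit which sends $(X,\varphi)$ to (a canonical copy of) its quotient dimension group. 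This I would obtain by passing through a Borel choice of Bratteli--Vershik representation, computing $K^0$ as the inductive limit of free abelian groups determined by the incidence matrices, and then quotienting by $\mathrm{Inf}$; everything in sight is effective from the Bratteli data, so the result is a Borel assignment.

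For (iv) $\leq_B$ (iii), the plan uses clause (3) of Theorem \ref{thm:GPS}, which identifies orbit equivalence with the existence of a homeomorphism between the sets of invariant probability measures, i.e., affine homeomorphism of the corresponding Choquet simplices. The strategy is then twofold: (a) reduce $E_{S_\infty}$ to affine homeomorphism of metrizable Choquet simplices in a Borel way, and (b) realize each Choquet simplex, uniformly Borel in its presentation, as the simplex of invariant measures of a $\{0,1\}$-Toeplitz subshift, using Downarowicz's theorem that every metrizable Choquet simplex so arises. Composing (a) with (b) and reading off the orbit equivalence class via GPS yields the desired Borel reduction. For (a) one would encode a countable structure $M$ as a dimension group with order unit in a Borel way so that $M \cong M'$ iff the groups are isomorphic (hence, via the state space functor, the associated Choquet simplices are affinely homeomorphic), exploiting the universality of $E_{S_\infty}$ for isomorphism of countable algebraic structures.

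The principal obstacle is uniformity in step (b): Downarowicz's construction proceeds by an inductive choice of Kakutani--Rokhlin towers (or equivalently Bratteli diagrams) whose Vershik map is a $\{0,1\}$-Toeplitz subshift with prescribed invariant-measure simplex, and at each stage many parameters (block lengths, column distributions, Toeplitz periods) must be chosen. To make the whole assignment Borel in the input simplex, I would fix a canonical enumeration of approximations to the simplex (e.g.\ via a dense sequence of its extreme-boundary measures, or via its generating compact convex set in $\ell^2$) and perform Downarowicz's recursion with explicit numerical choices driven by this enumeration, producing a single Borel function from presentations of simplices to $\{0,1\}$-subshifts in $\{0,1\}^{\mathbb Z}$. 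Verifying that the output is indeed Toeplitz, minimal, and has the intended simplex of invariant measures is then the technical heart of the argument, after which GPS closes the loop.
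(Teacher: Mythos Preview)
The paper does not itself prove Theorem~\ref{thm:Melleray}; it is quoted as a result of Melleray, and the only remark offered is that the reduction (i) $\leq_B$ (iv) follows from clause (2) of Theorem~\ref{thm:GPS}, since isomorphism of countable ordered abelian groups with order unit is classified by an $S_\infty$-action. Your plan for (i) $\leq_B$ (iv) is in line with this.

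Your plan for (iv) $\leq_B$ (iii), however, has a genuine gap. You propose to pass from countable structures to dimension groups, then via the state-space functor to Choquet simplices, and finally to invoke Downarowicz's realization of an arbitrary Choquet simplex as the invariant-measure simplex of a $\{0,1\}$-Toeplitz subshift. But the orbit-equivalence invariant is \emph{not} the affine-homeomorphism type of the simplex $M_\varphi$: by Theorem~\ref{thm:GPS} it is the dimension group $K^0/\mathrm{Inf}$ with its positive cone and order unit (equivalently, in Melleray's language, the \emph{dynamical} simplex, which records in addition the clopen value data). Two uniquely ergodic systems, for instance, have identical (one-point) Choquet simplices yet need not be orbit equivalent---exactly the phenomenon exploited in the paper's own Theorem~\ref{thm:Toe}, where different groups $\Gamma_\sigma$ distinguish uniquely ergodic regular Toeplitz subshifts. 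Hence even if Downarowicz's construction realizes your prescribed Choquet simplex, you have no control over the resulting $K^0/\mathrm{Inf}$, and the backward implication of your purported reduction fails. The correct step (b) must be a Borel-uniform realization theorem for simple dimension groups with order unit and trivial infinitesimals (equivalently, dynamical simplices) as $K^0/\mathrm{Inf}$ of $\{0,1\}$-Toeplitz subshifts, not merely for Choquet simplices; this is what Melleray actually carries out.
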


 In this paper we consider the class of all Cantor minimal systems with finite topological rank. This is a Borel subclass of all Cantor minimal systems, hence is a standard Borel space itself. We will show that for any postive integer $K$, a Cantor minimal system of topological rank $K$ has at most $K$ many ergodic invariant probability measures. In view of this, we will also consider the Borel class of all Cantor minimal systems with only finitely many ergodic invariant probability measures. We completely determine the complexity of the orbit equivalence on this class with the following result.

\begin{theorem} \label{thm:main1} The following equivalence relations are Borel bireducible with each other:
\begin{enumerate}
\item[\rm (i)] The orbit equivalence of all Cantor  minimal systems with finitely many ergodic invariant probability measures.
\item[\rm (ii)] The orbit equivalence of all uniquely ergodic Cantor minimal systems.
\item[\rm (iii)] The orbit equivalence of all uniquely ergodic minimal subshifts.
\item[\rm (iv)] The orbit equivalence of all regular Toeplitz subshifts.
\item[\rm (v)] The orbit equivalence of all regular $\{0,1\}$-Toeplitz subshifts.
\item[\rm (vi)] The equivalence relation $=^+$ on $(2^{\mathbb{N}})^{\mathbb{N}}$ defined as follows: for any $(x_n), (y_n)\in (2^{\mathbb{N}})^{\mathbb{N}}$, 
$$ (x_n)=^+(y_n)\iff \left\{x_n\in 2^{\mathbb{N}}\colon n\in \mathbb{N}\right\}=\left\{y_n\in 2^{\mathbb{N}}\colon n\in \mathbb{N}\right\}. $$
\end{enumerate}
\end{theorem}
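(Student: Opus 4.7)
The plan is to establish a cycle of Borel reductions
$$ (\mathrm{vi}) \leq_B (\mathrm{v}) \leq_B (\mathrm{iv}) \leq_B (\mathrm{iii}) \leq_B (\mathrm{ii}) \leq_B (\mathrm{i}) \leq_B (\mathrm{vi}). $$
The four middle reductions are free: each listed class is a Borel subclass of the next (regular $\{0,1\}$-Toeplitz $\subseteq$ regular Toeplitz $\subseteq$ uniquely ergodic minimal subshifts $\subseteq$ uniquely ergodic Cantor minimal systems $\subseteq$ those with finitely many ergodic measures), so the identity is already a Borel reduction once one checks that membership in each smaller class is Borel and that the coding of a subshift as a Cantor system is Borel. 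All of this is standard.

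For the upper bound $(\mathrm{i})\leq_B (\mathrm{vi})$, I would invoke Theorem~\ref{thm:GPS}(3). If $(X,\varphi)$ has ergodic invariant probability measures $\mu_1,\dots,\mu_k$, the set of all $\varphi$-invariant probability measures is the simplex spanned by them, so the condition in (3) amounts to the existence of a homeomorphism $f\colon X\to Y$ carrying $\{\mu_1,\dots,\mu_k\}$ onto the analogous set for $(Y,\psi)$. Classical results on ``good'' measures on Cantor sets (in the spirit of Akin, applied to the ergodic measures of a Cantor minimal system) then characterize this in terms of the joint clopen value set
$$ V(X,\varphi)=\bigl\{(\mu_1(U),\dots,\mu_k(U)) : U\subseteq X \text{ clopen}\bigr\}\subseteq[0,1]^k, $$
considered up to the permutation action of $S_k$ on coordinates. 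Thus the orbit equivalence class of $(X,\varphi)$ is determined by the pair $(k,V(X,\varphi)/S_k)$, which is a countable subset of a Polish space. I would Borel-encode each element of this set as an element of $2^{\mathbb{N}}$ (tagging by $k$ at the beginning to separate the different strata), yielding a Borel map into $(2^{\mathbb{N}})^{\mathbb{N}}$ that reduces orbit equivalence to $=^+$. Producing this map in a Borel-uniform way requires Borel selections of an enumeration of clopens of $X$, of the ergodic measures, and of their values on clopens, all of which are available for a Borel parametrization of Cantor minimal systems (Section~\ref{sec:2}).

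For the lower bound $(\mathrm{vi})\leq_B (\mathrm{v})$, which is the substantive step, the strategy is a Downarowicz-style construction of regular $\{0,1\}$-Toeplitz subshifts with prescribed clopen value sets. Given a sequence $(x_n)\in (2^{\mathbb{N}})^{\mathbb{N}}$, I would build a regular $\{0,1\}$-Toeplitz sequence $y_{(x_n)}$, typically via an inductive filling procedure along a period structure $p_0\mid p_1\mid p_2\mid\cdots$, in such a way that (a) the filling is regular, so the resulting subshift $T_{(x_n)}$ is uniquely ergodic and a regular $\{0,1\}$-Toeplitz subshift, and (b) the clopen value set of its unique invariant measure, after a Borel-decodable normalization, recovers precisely the countable \emph{set} $\{x_n:n\in\mathbb{N}\}$ (not the sequence). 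Criterion (b) is the point: I would reserve specific ``slots'' inside the period-$p_n$ skeleton to imprint $x_n$ as a rational combination of dyadic measures of cylinder sets; the set of values thus produced should generate a countable subset of $[0,1]$ that is interpretable as $\{x_n\}$, while any redundancies (enumerations of the same set in different orders) are absorbed because the invariant is the set of clopen measure values.

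The main obstacle is assembling (a), (b), and (c)~regularity uniformly and Borel-ly in $(x_n)$. Key difficulties are: ensuring the construction remains $\{0,1\}$-valued while encoding arbitrary $x_n\in 2^{\mathbb{N}}$ (this forces a careful allocation of periods to spread each $x_n$ across infinitely many levels); ensuring that distinct countable sets $\{x_n\}\neq\{y_n\}$ yield distinct invariants, which requires the encoding of each $x_n$ to be recognizable within the clopen value set of the unique invariant measure without being confused with combinations of other $x_m$'s; and finally verifying that orbit equivalent outputs correspond to equal input sets, which reduces to Theorem~\ref{thm:GPS}(3) combined with the good-measure characterization used in the upper bound. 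Once these are in place, composing the reductions closes the cycle and yields the Borel bireducibility of (i)--(vi).
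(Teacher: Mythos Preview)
Your cycle is right, and your upper bound is close to the paper's, though two steps are glossed. There is no ready-made multi-measure Akin theorem; the paper instead shows (Lemmas~\ref{lem:dim} and~\ref{lem:OEkey}, via Theorem~\ref{thm:GPS}(2)) that the complete invariant is the \emph{group} $\Gamma_\varphi=\{(\int f\,d\mu_1,\dots,\int f\,d\mu_k):f\in C(X,\mathbb Z)\}$ up to a coordinate permutation in $\mathrm{Sym}(k)$, which your clopen value set generates. And ``countable set modulo a single $\phi\in\mathrm{Sym}(k)$'' is not literally $=^+$; the paper finishes that step by invoking the Hjorth--Kechris--Louveau theorem on ${\bf\Pi}^0_3$ orbit relations of $S_\infty$-type groups.

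The substantive gap is in the lower bound. For a uniquely ergodic system the invariant is not an arbitrary countable subset of $[0,1]$ but a countable sub\emph{group} $\Gamma_\sigma\leq\mathbb R$ containing $1$. Whatever values you imprint, the invariant you actually produce is the group they generate together with whatever your period structure forces (typically all of $\mathbb Q$), and a set of reals is generally not recoverable from the group it generates. The paper's key device is to first Borel-reduce $=^+$ to its restriction $=^+_A$, where $A\subseteq\mathbb R$ is an uncountable Borel set with every finite subset of $A\cup\{1\}$ $\mathbb Q$-linearly independent; it then builds (via a constant-length, proper, primitive, recognizable generating sequence) a regular $\{0,1\}$-Toeplitz subshift whose $\Gamma_\sigma$ is exactly the $\mathbb Q$-span of $\{1\}\cup\{a_n:n\in\mathbb N\}$, and linear independence guarantees this span determines $\{a_n\}$. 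Your plan to encode arbitrary $x_n\in 2^{\mathbb N}$ directly has no such recovery mechanism, and without an independence step of this kind the reduction cannot be injective on $=^+$-classes.
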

The Borel reducibility from (ii) to (vi) is essentially proved in \cite[Corollary 1]{GPS}.

\begin{figure}[!htbp]

\begin{tikzpicture}[scale=0.045]

\fill[gray!22] (360,2) to (360,196) to (244,196) to (244, 174) to (324,174) to (324, 2);
\fill[gray!22] (252,196) to (252, 139) to (260, 139) to (260, 196) to (252,196);
\fill[gray!22] (282,196) to (282, 99) to (290, 99) to (290, 196) to (282,196);
\node at (345, 123) [text width=1.5cm] {\small Toeplitz subshifts};

\draw (322,0) to (400,0) to (400, 204) to (196,204) to (196,46) to (322,46) to (322,0);
\node at (255,210) {finite topological rank};
\node at (365,214) [text width=3cm] {not of finite topological rank};
\draw[dotted] (320,-5) to (320, 220);
\node at (140,125) [rotate=-90] {finitely many ergodic measures};
\node at (178,180) [text width=1.8cm] {\small uniquely ergodic};
\node at (180, 150) [text width=2cm] {\small 2 ergodic measures};
\node at (180, 110) [text width=2cm] {\small $K$ ergodic measures};
\node at (195, 25) {infinitely many ergodic measures};
\draw[dotted] (138, 44) to (405,44);
\draw[dotted] (148, 162) to (405,162);
\draw[dotted] (148, 137) to (405,137);
\draw[dotted] (148, 122) to (405,122);
\draw[dotted] (148, 97) to (405, 97);

\draw (202,198) to (202,164) to (240,164) to (240, 198) to (202,198);
\node at  (221, 181) {\small odometers};
\draw (202, 160) to (202, 139) to (260, 139) to (260, 198) to (242, 198) to (242, 160) to (202, 160);
\node at (238, 149) [text width=3cm] {\tiny minimal subshifts of topological rank 2};
\draw (202, 120) to (202, 99) to (290, 99) to (290, 198) to (272, 198) to (272, 120) to (202, 120);
\node at (238, 109) [text width=3cm] {\tiny minimal subshifts of topological rank $K$};

\draw (200,200) to (200,50) to (318,50) to (318, 200) to (200, 200);
\draw (198,202) to (198,48) to (398,48) to (398, 202) to (198,202);
\draw[dashed] (244, 196) to (244, 174) to (360, 174) to (360, 196) to (244,196);
\node at (320,185) [text width=3cm] {\tiny regular Toeplitz subshifts};
\draw[dashed] (324, 42) to (324, 2) to (360, 2) to (360,42) to (324, 42);

\end{tikzpicture}
\caption{Classes of Cantor minimal systsems considered in this paper.\label{fig:1}}
\end{figure}
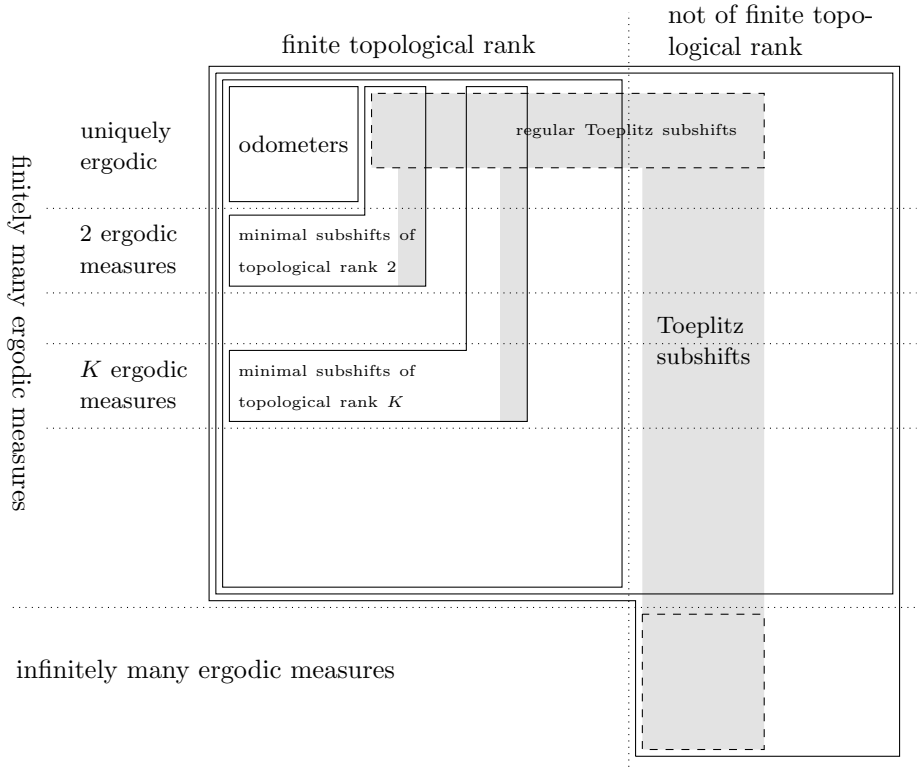

Regarding Cantor minimal systems with finite topological rank, Downarowicz and Maass \cite{DM} have shown that they are topologically conjugate to either odometers (in case of topological rank 1) or minimal subshifts. For the odometers, it is easy to see that the orbit equivalence coincides with the topological conjugacy, and both are Borel bireducible with the equality relation $=$ on $2^{\mathbb{N}}$. Such equivalence relations are called {\em smooth}. 
For minimal subshifts of  finite topological rank, we prove the following result.

\begin{theorem} \label{thm:main2} For any integer $K\ge 2$, the orbit equivalence of minimal subshifts of rank $K$ is virtually countable, i.e., Borel reducible to a countable equivalence relation.
\end{theorem}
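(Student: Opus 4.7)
The plan is to use Theorem~\ref{thm:GPS} to replace orbit equivalence of Cantor minimal systems by isomorphism of their quotient dimension groups with order unit, and then exploit the bounded-rank nature of these groups for minimal subshifts of topological rank $K$ to obtain a Borel reduction to a countable Borel equivalence relation.

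First, I would show that every minimal subshift $X$ of topological rank $K$ admits, in a uniformly Borel way, a Bratteli--Vershik representation whose every level $V_n$ satisfies $|V_n|\leq K$. The existence of such a representation is built into the notion of topological rank (together with the Downarowicz--Maass conjugacy), and promoting it to a Borel selection is the technical core of the argument. From such a diagram, $K^0(X,\varphi)$ appears as a direct limit of the form $\mathbb{Z}^{n_0}\to\mathbb{Z}^{n_1}\to\cdots$ with $n_k\leq K$, and is therefore torsion-free of rank at most $K$; the same bound passes to the quotient $G(X):=K^0(X,\varphi)/\mbox{Inf}(K^0(X,\varphi))$. The diagram also supplies a positive cone and a distinguished order unit $u(X)\in G(X)$. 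This yields a Borel map $X\mapsto (G(X),u(X))$ into the standard Borel space $\mathcal{G}_K$ of ordered torsion-free abelian groups of rank at most $K$ equipped with an order unit.

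Next, I would parameterize $\mathcal{G}_K$ by encoding each group as a subgroup of $\mathbb{Q}^K$ together with its positive cone and unit element, and observe that the isomorphism relation $\cong_K$ on $\mathcal{G}_K$ is countable Borel. The key point is that any isomorphism between rank-$\leq K$ torsion-free abelian groups extends to an element of the countable group $GL_K(\mathbb{Q})$, so each code has only countably many isomorphic partners, and requiring the positive cone and unit to be preserved only refines this. By Theorem~\ref{thm:GPS}, two rank-$K$ subshifts $X$ and $Y$ are orbit equivalent if and only if $(G(X),u(X))\cong_K (G(Y),u(Y))$, so the map constructed above is a Borel reduction of $R_K$ to $\cong_K$.

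The principal obstacle is the Borel selection in the first step: for each rank-$K$ subshift one must simultaneously choose a Bratteli--Vershik representation of width $\leq K$ and extract the ordered-group-with-unit invariant, all uniformly in a Borel way. The construction of such representations comes from finite-rank structure theory, but pinning down a concrete Borel procedure (and showing that different Borel choices give the same orbit-equivalence invariant) will take real work. Once this is done, the countability of the target equivalence relation reduces to the countability of $GL_K(\mathbb{Q})$, an essentially algebraic fact, and the conclusion follows from Theorem~\ref{thm:GPS}.
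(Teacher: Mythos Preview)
Your proposal is correct and follows the same core strategy as the paper: use Theorem~\ref{thm:GPS} to replace orbit equivalence by isomorphism of $K^0/\mbox{Inf}$, observe that for a subshift of finite topological rank this quotient has bounded $\mathbb{Q}$-rank, and conclude that isomorphism is governed by an action of the countable group $\mbox{\rm GL}(\cdot,\mathbb{Q})$.

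The implementations differ, however, in a way that matters for the Borel bookkeeping. You propose to Borel-select a bounded-width Bratteli--Vershik diagram and read off $K^0/\mbox{Inf}$ as an abstract ordered group with unit, encoded inside $\mathbb{Q}^K$ together with its cone. The paper instead uses the concrete realization of $K^0/\mbox{Inf}$ as the group $\Gamma_\varphi\subset\mathbb{R}^n$ of vectors of integrals against the $n$ ergodic measures (Lemma~\ref{lem:dim}); here the ordering is automatically the one inherited from $(\mathbb{R}^+)^n$ and the unit is $1^n$, so nothing beyond $\Gamma_\varphi$ itself needs to be recorded, and the assignment $\varphi\mapsto\{\mu_1,\dots,\mu_n\}$ is already Borel by Corollary~\ref{cor:KEM}. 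This sidesteps exactly the diagram-selection problem you flag as the principal obstacle: the $\mathcal{S}$-adic/Bratteli--Vershik structure is invoked only in Lemma~\ref{lem:NKL} to prove the rank bound on $\Gamma_\varphi$, where mere existence suffices and no Borel choice is required. The paper then stratifies by the number of ergodic measures and by the $\mathbb{Q}$-rank $L$ of $\Gamma_\varphi$, reducing each stratum to an action of $\mbox{\rm GL}(L,\mathbb{Q})\times\mbox{\rm Sym}(n)$; this finer decomposition is what later yields the amenability upper bound for topological rank~$2$. Your route should work as well once the selection is carried out, but the measure-theoretic realization is cleaner and delivers more.
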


We also obtain the following result using a recent theorem of Poulin \cite[Corollary 1.3]{Poulin}.

\begin{theorem}\label{thm:main3}  For any integer $K\ge2$, the orbit equivalence of minimal subshifts of topological rank $K$ is not smooth. Moreover, when $K\geq 4$, it is not virtually hyperfinite, and when $K\geq 5$, it is not virtually treeable.
\end{theorem}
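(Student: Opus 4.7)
The plan is to establish each of the three negative complexity assertions by Borel-reducing a benchmark equivalence relation of the appropriate known complexity into $R_K$. By the Giordano--Putnam--Skau theorem (Theorem~\ref{thm:GPS}), such a reduction amounts to constructing, uniformly in a Borel parameter, minimal subshifts of topological rank exactly $K$ whose dimension groups modulo infinitesimals realize a prescribed family of ordered unital subgroups of $\mathbb{R}^{\ell}$ for some $\ell\leq K$, the $\mathbb{R}^{\ell}$ arising from evaluation at the $\ell$ ergodic invariant measures (of which there are at most $K$ when the topological rank is $K$). Note that, since Theorem~\ref{thm:main2} already shows $R_K$ to be virtually countable, the non-smoothness below cannot be witnessed by $=^+$ as in Theorem~\ref{thm:main1}; instead one needs a non-smooth \emph{countable} Borel equivalence relation such as $E_0$.

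For non-smoothness when $K\geq 2$, I would work inside the uniquely ergodic rank-$K$ systems. Their dimension groups take the form $\mathbb{Z}\cdot 1+\mathbb{Z}\alpha_1+\cdots+\mathbb{Z}\alpha_{K-1}\subseteq\mathbb{R}$, and orbit equivalence is precisely equality of such subgroups, i.e., the orbit equivalence of the natural $GL_K(\mathbb{Z})$-action on generating tuples $(\alpha_1,\dots,\alpha_{K-1})$. This action admits $E_0$ as a Borel subequivalence relation by standard arguments (for instance, encoding tail-equivalent binary sequences into Diophantine data). A Borel realization of these subgroups as uniquely ergodic minimal subshifts of topological rank \emph{exactly} $K$, paralleling the rank-tight constructions used to prove the uniquely-ergodic portion of Theorem~\ref{thm:main1}, then produces the desired reduction $E_0\leq_B R_K$.

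For non-virtual-hyperfiniteness when $K\geq 4$ and non-virtual-treeability when $K\geq 5$, I would appeal to Poulin's \cite[Corollary~1.3]{Poulin}, which supplies a countable Borel equivalence relation on a space of subgroups of $\mathbb{R}^K$ failing to be virtually hyperfinite at the threshold $K=4$ and failing to be virtually treeable at $K=5$; this is what fixes the numerical thresholds in the statement. Composing Poulin's reduction with a Borel realization of these subgroups as dimension groups of minimal subshifts of topological rank exactly $K$---now using systems with up to $K$ ergodic invariant probability measures, so that the target simplex of traces is $(K{-}1)$-dimensional---delivers both reductions.

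The principal technical obstacle in all three cases is the same Borel realization lemma: given a Borel parameter for a countable ordered unital subgroup $G\subset\mathbb{R}^{\ell}$ ($\ell\leq K$) meeting the hypotheses imposed by topological rank $K$ (simplicity, Riesz interpolation, appropriate state-space dimension), produce Borel-uniformly a minimal subshift whose dimension group modulo infinitesimals is $G$ and whose topological rank is exactly $K$. The upper bound on the rank will follow from an explicit Bratteli--Vershik presentation; pinning the rank down from below---ruling out strictly smaller ranks---is the delicate point and should reuse the rank-tightness estimate already deployed for the uniquely ergodic constructions of exact topological rank $n$ in Theorem~\ref{thm:main1}, now adapted to dimension groups with up to $K$ extreme traces.
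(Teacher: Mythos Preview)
Your non-smoothness argument is essentially the paper's: encode a $GL$-action on tuples of irrationals into orbit equivalence of uniquely ergodic rank-$K$ subshifts via their one-dimensional trace groups $\Gamma_\sigma\subset\mathbb{R}$. The paper packages this as an equivalence relation $F_N$ on $\mathbb{R}^{N-1}$ (two tuples equivalent iff some $M\in GL(N,\mathbb{Q})$ sends $(x_1,\dots,x_{N-1},1)$ to $(y_1,\dots,y_{N-1},1)$) and builds, Borel-in-parameter, regular Toeplitz subshifts of topological rank exactly $N$ with $\Gamma_\sigma$ equal to the $\mathbb{Q}$-span of $\{1,x_1,\dots,x_{N-1}\}$, yielding $F_N\leq_B R_{N,1}\leq_B R_N$. (The paper works over $\mathbb{Q}$ rather than your $\mathbb{Z}$ because the construction forces $\mathbb{Q}\subseteq\Gamma_\sigma$; this is inessential.)

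Where you diverge is in proposing to pass to multi-ergodic systems for the non-hyperfinite and non-treeable assertions. The paper does \emph{not} do this: it stays in the uniquely ergodic class throughout and uses the \emph{same} reduction $F_N\leq_B R_N$ for all three claims. The key observation you are missing is that Poulin's result concerns the $GL(N-1,\mathbb{Q})$-action on $\mathbb{R}^{N-1}$, which already sits as a subequivalence relation of $F_N$; the $(N-1)$-dimensional data lives in the $\mathbb{Q}$-linear-dependence structure among generators of a single real subgroup $\Gamma_\sigma\subset\mathbb{R}$, not in a higher-dimensional trace simplex. Since subrelations of hyperfinite (resp.\ treeable) countable Borel equivalence relations are again hyperfinite (resp.\ treeable), Poulin's thresholds transfer directly to $F_N$ and hence to $R_N$. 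Your multi-ergodic route is not obviously wrong, but it introduces two unaddressed obstacles: a realization lemma for systems of topological rank exactly $K$ with $K$ ergodic measures and prescribed $\Gamma_\varphi\subset\mathbb{R}^K$ (the paper only builds the uniquely ergodic case), and a matching of Poulin's $GL$-orbits with the permutation-equivalence of such $\Gamma_\varphi$ that Lemma~\ref{lem:OEkey} says governs orbit equivalence in the multi-ergodic setting. The paper's uniquely-ergodic-throughout approach sidesteps both.
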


These results give some upper and lower bounds for the orbit equivalence relations in question.  In Figure~\ref{fig:1} we summarize the classes of Cantor minimal systems that are considered in this paper. In Figure~\ref{fig:2} we summarize some of the complexity results.

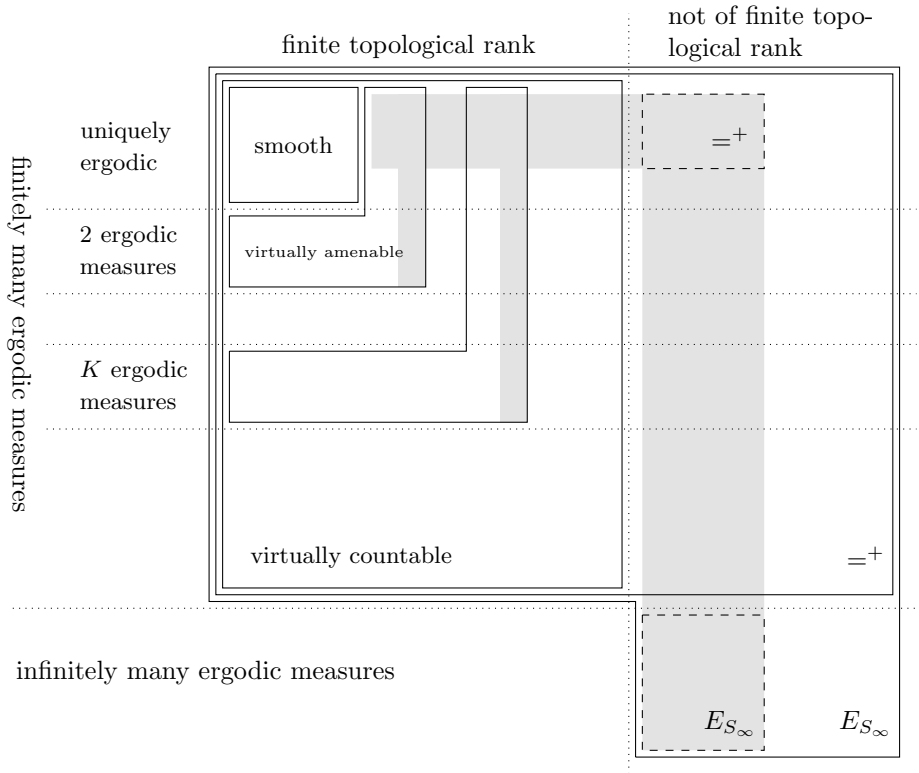
\begin{figure}[!htbp]
\begin{tikzpicture}[scale=0.045]

\fill[gray!22] (360,2) to (360,196) to (244,196) to (244, 174) to (324,174) to (324, 2);
\fill[gray!22] (252,196) to (252, 139) to (260, 139) to (260, 196) to (252,196);
\fill[gray!22] (282,196) to (282, 99) to (290, 99) to (290, 196) to (282,196);

\draw (322,0) to (400,0) to (400, 204) to (196,204) to (196,46) to (322,46) to (322,0);
\node at (255,210) {finite topological rank};
\node at (365,214) [text width=3cm] {not of finite topological rank};
\draw[dotted] (320,-5) to (320, 220);
\node at (140,125) [rotate=-90] {finitely many ergodic measures};
\node at (178,180) [text width=1.8cm] {\small uniquely ergodic};
\node at (180, 150) [text width=2cm] {\small 2 ergodic measures};
\node at (180, 110) [text width=2cm] {\small $K$ ergodic measures};
\node at (195, 25) {infinitely many ergodic measures};
\draw[dotted] (138, 44) to (405,44);
\draw[dotted] (148, 162) to (405,162);
\draw[dotted] (148, 137) to (405,137);
\draw[dotted] (148, 122) to (405,122);
\draw[dotted] (148, 97) to (405, 97);

\draw (202,198) to (202,164) to (240,164) to (240, 198) to (202,198);
\node at (221,181) {\small smooth};
\draw (202, 160) to (202, 139) to (260, 139) to (260, 198) to (242, 198) to (242, 160) to (202, 160);
\node at (230,149) {\tiny virtually amenable};
\draw (202, 120) to (202, 99) to (290, 99) to (290, 198) to (272, 198) to (272, 120) to (202, 120);
\node at (238, 59) {\small virtually countable};

\draw (200,200) to (200,50) to (318,50) to (318, 200) to (200, 200);
\draw (198,202) to (198,48) to (398,48) to (398, 202) to (198,202);
\draw[dashed] (324, 196) to (324, 174) to (360, 174) to (360, 196) to (324,196);

\node at (350, 183) {$=^+$};
\draw[dashed] (324, 42) to (324, 2) to (360, 2) to (360,42) to (324, 42);

\node at (350,10) {$E_{S_\infty}$};
\node at (390,10) {$E_{S_\infty}$};
\node at (390, 59) {$=^+$};

\end{tikzpicture}
\caption{Known results on the complexity of the orbit equivalence.\label{fig:2}}
\end{figure}

In our proofs we come across some constructions of minimal subshifts of topological rank exactly $K$ for any integer $K\geq 2$. This seems to be new and we state it explicitly.

\begin{theorem} For each integer $K\geq 2$ there exist uniquely ergodic minimal subshifts of topological rank exactly $K$.
\end{theorem}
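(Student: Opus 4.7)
The plan is to produce, for each $K\ge2$, a single primitive substitution on $K$ letters whose associated subshift supplies the desired example. Fix the alphabet $A=\{1,\ldots,K\}$ and choose the generalized Fibonacci substitution $\sigma\colon A^\ast\to A^\ast$ defined by $\sigma(i)=1\,(i{+}1)$ for $i<K$ and $\sigma(K)=1$. A direct computation shows that its $K\times K$ incidence matrix $M_\sigma$ has entries in $\{0,1\}$, is primitive (every letter appears in $\sigma^K(1)$), and satisfies $\det M_\sigma=\pm1$, so $M_\sigma$ is invertible over $\mathbb{Q}$. Standard substitutive dynamics then gives that $X_\sigma$ is an aperiodic minimal subshift which is uniquely ergodic by Perron--Frobenius, and its stationary Bratteli--Vershik representation has exactly $K$ vertices per level, yielding the upper bound $\toprank(X_\sigma)\le K$.

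The matching lower bound will come from a dimension group computation refining the inequality ``topological rank bounds the number of ergodic measures'' already used in this paper. For any Cantor minimal system of topological rank $r$, there is a Bratteli--Vershik representation of width eventually at most $r$, so its dimension group $K^0(X,\varphi)$ is a direct limit of copies of $\mathbb{Z}^r$ and thus has torsion-free rank at most $r$; this is precisely the observation giving the bound on ergodic measures stated in the introduction. For $X_\sigma$, the stationary Bratteli diagram identifies the dimension group with $\varinjlim(\mathbb{Z}^K, M_\sigma^T)$, and invertibility of $M_\sigma$ over $\mathbb{Q}$ yields
\[
\varinjlim(\mathbb{Z}^K, M_\sigma^T)\otimes\mathbb{Q}\;\cong\;\mathbb{Q}^K,
\]
so the torsion-free rank equals exactly $K$. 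Since torsion-free rank of $K^0$ is a topological-conjugacy invariant, this forces $\toprank(X_\sigma)\ge K$, and combined with the previous bound, $\toprank(X_\sigma)=K$.

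The main obstacle I anticipate is cleanly formalizing the implication ``$\toprank(X)\le r$ implies torsion-free rank of $K^0(X,\varphi)\le r$'' in exactly the form above. Once this is extracted from the structure theory of finite-rank Cantor minimal systems in the spirit of Downarowicz--Maass, the rest is a direct verification for the concrete substitution $\sigma$; the required numerics (primitivity, $\det M_\sigma\ne 0$, irrationality of the Perron eigenvalue and hence aperiodicity of the fixed point) are routine to check, and unique ergodicity follows by the standard Perron--Frobenius argument for primitive substitutions.
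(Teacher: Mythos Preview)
Your approach via the $K$-bonacci substitution is correct in outline and genuinely different from the paper's. The paper obtains its examples as a byproduct of the Borel reduction $F_N\le_B R_N$: it builds regular (hence uniquely ergodic) Toeplitz subshifts from a constant-length proper primitive recognizable generating sequence depending on parameters $(x_1,\dots,x_{N-1})$, then chooses the parameters $\mathbb{Q}$-linearly independent from $1$ so that $\Gamma_\sigma$ has $\mathbb{Q}$-rank $N$; the paper's Lemma~\ref{lem:NKL} (namely $\mathcal{R}_{N,K,L}=\varnothing$ for $L>N$) then forces topological rank $\ge N$. Your single explicit substitution is simpler and more concrete, and the lower-bound mechanism is essentially the same (the $\mathbb{Q}$-rank of the dimension group bounds the topological rank from below). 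What the paper's construction buys is flexibility---a whole Borel family of rank-$N$ Toeplitz subshifts parametrized by $\mathbb{R}^{N-1}$, which is what drives the reduction---and the extra information that the examples are Toeplitz.

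Two technical points deserve care. First, the $K$-bonacci substitution is not right-proper (the words $\sigma(i)$ end in pairwise distinct letters), so the stationary ordered Bratteli diagram on $K$ vertices does not directly carry a Vershik map conjugate to $X_\sigma$, and the identification $K^0(X_\sigma)\cong\varinjlim(\mathbb{Z}^K,M_\sigma^T)$ is not immediate from Durand--Host--Skau without first properizing. The upper bound $\toprank\le K$ still follows from the recognizable $\mathcal{S}$-adic representation (the result of Donoso--Durand--Maass--Petite cited in the paper), and for the lower bound you can bypass $K^0$ and argue directly through $\Gamma_\sigma$ as the paper does: the letter frequencies are proportional to $\lambda^{1-i}$ for the Perron root $\lambda$, and irreducibility of $x^K-x^{K-1}-\cdots-1$ over $\mathbb{Q}$ (a classical fact) makes them $\mathbb{Q}$-linearly independent, whence $\Gamma_\sigma$ has $\mathbb{Q}$-rank $K$. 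Second, your aside that the rank bound on $K^0$ is ``precisely the observation giving the bound on ergodic measures'' conflates two distinct statements: Lemma~\ref{lem:MSN} (ergodic measures $\le$ rank) and Lemma~\ref{lem:NKL} ($\mathbb{Q}$-rank of $\Gamma_\varphi\le$ rank) are separate consequences of a finite-rank representation, and it is the second you actually need here.
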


The rest of the paper is organized as follows. In Section~\ref{sec:2} we present some preliminary definitions and theorems. In Section~\ref{sec:3} we prove Theorem~\ref{thm:main1}. In Section~\ref{sec:4} we prove Theorems~\ref{thm:main2} and \ref{thm:main3}.

{\sc Acknowledgments.} We would like to thank Julien Melleray and Bo Peng for useful discussions on the topic of this paper. This research was done when the second author visited McGill University. The second author would like to thank McGill University and Professor Marcin Sabok for their hospitality and support.

\section{Preliminaries \label{sec:2}}

\subsection{Descriptive set theory of equivalence relations}
In this subsection we recall some basic notions and results in the descriptive set theory of equivalence relations. For other undefined notions and more details, the reader can consult \cite{KechrisBook} and \cite{GaoBook}. 

A topological space $X$ is {\em Polish} if $X$ is separable and completely metrizable. A {\em $G_\delta$ subset} of a topological space is the intersection of countably many open sets. In a Polish space $X$, a subspace $Y$ is Polish if and only if $Y$ is a $G_\delta$ subset of $X$. A subset $Y$ of $X$ is {\em comeager} or {\em generic} if $Y$ contains a dense $G_\delta$ subset of $X$; a subset of $X$ is {\em meager} if its complement is comeager. 

If $X$ is a Polish space and $\mathcal{B}$ is the $\sigma$-algebra generated by the open sets of $X$,  then sets in $\mathcal{B}$ are called {\em Borel sets}. More abstractly, a {\em Borel space} is a pair $(X, \mathcal{B})$ where $X$ is a set and $\mathcal{B}$ is a $\sigma$-algebra of subsets of $X$; sets in $\mathcal{B}$ are again called {\em Borel sets}. A {\em standard Borel space} is a Borel space $(X, \mathcal{B})$ where there exists a Polish topology $\tau$ on $X$ such that $\mathcal{B}$ is the $\sigma$-algebra generated by $\tau$. We will use the following fact (see \cite[Corollary 13.4]{KechrisBook}): If $(X, \mathcal{B})$ is a standard Borel space and $Y\in \mathcal{B}$, then $(Y, \mathcal{B}\upharpoonright Y)$ is again a standard Borel space, where 
$$ \mathcal{B}\upharpoonright Y=\{ B\cap Y\colon B\in\mathcal{B}\}. $$
For notational simplicity, we write $X$ for a standard Borel space $(X, \mathcal{B})$ if there is no danger of confusion.

If $X, Y$ are Polish spaces and $f\colon X\to Y$ is a map, then we say that $f$ is {\em continuous} if for any open subset $U$ of $Y$, $f^{-1}(U)$ is an open subset of $X$. If $X, Y$ are Polish spaces or standard Borel spaces and $f\colon X\to Y$ is a map, then we say that $f$ is {\em Borel} (or {\em Borel measurable}) if for any Borel subset $B$ of $Y$, $f^{-1}(B)$ is a Borel subset of $X$. If $X$ is a Polish space or a standard Borel space, a subset $A$ of $X$ is {\em analytic} if there is a standard Borel space $Z$ and a Borel map $g\colon Z\to X$ such that $A=g(Z)$; $A$ is {\em coanalytic} if $X\setminus A$ is analytic. 

An equivalence relation $E$ on $X$ is said to be {\em Borel} (or {\em analytic}) if $E$ is a Borel (or  analytic, respectively) subset of $X\times X$. As we already recalled in the Introduction, if $E, F$ are equivalence relations on Polish or standard Borel spaces $X, Y$, respectively, then $E$ is {\em Borel reducible} to $F$, denoted $E\leq_B F$, if there is a Borel map $f\colon X\to Y$ such that for any $x_1, x_2\in X$, we have
$$ x_1Ex_2\iff f(x_1)Ff(x_2). $$
We say that $E$ is {\em Borel bireducible} with $F$, denoted $E\sim_B F$, if both $E\leq_B F$ and $F\leq_B E$. We say that $E$ is {\em strictly Borel reducible} to $F$, denoted $E<_B F$, if $E\leq_BF$ but $F\not\leq_B E$.

A topological group $G$ is {\em Polish} if its topology is Polish. If $G$ is a Polish group and $X$ is a Polish space, then an action $a\colon G\times X\to X$ of $G$ on $X$ is {\em continuous} if $a$ is a continuous map from the Polish space $G\times X$ to the Polish space $X$. If $G$ is a Polish space and $X$ is a Polish space or a standard Borel space, then an action $a\colon G\times X \to X$ is {\em Borel} if $a$ is a Borel map from $G\times X$ to $X$. For notational simplicity, we use $G\curvearrowright_a X$ or $G\curvearrowright X$ to denote the action $a$, and use $g\cdot x$ to denote $a(g, x)$.

If $G\curvearrowright X$ is a continuous or Borel action of a Polish group $G$ on a Polish or standard Borel space $X$, then for any $x\in X$, the {\em orbit} of $x$ is the set $G\cdot x=\orb_G(x)=\{g\cdot x\colon g\in G\}$. The {\em orbit equivalence relation} induced by the action $G\curvearrowright X$ is defined as
$$ xE^X_Gy\iff G\cdot x=G\cdot y\iff \orb_G(x)=\orb_G(y)\iff \exists g\in G\ (g\cdot x=y). $$
In general, the equivalence relation $E^X_G$ is analytic and not necessarily Borel.

It is a theorem of Becker and Kechris (see, e.g. \cite[Theorem 3.3.4]{GaoBook}) that for any Polish group $G$ there exists a Borel action $G\curvearrowright X$, where $X$ is a standard Borel space, such that for any Borel action $G\curvearrowright Y$, where $Y$ is a standard Borel space, we have $E^Y_G\leq_B E^X_G$. The orbit equivalence relation $E^X_G$ is called a {\em universal} $G$-orbit equivalence relation, and is denoted $E^\infty_G$, or simply $E_G$. If $H$ is a closed subgroup of the Polish group $G$, then $H$ is also Polish and $E_H\leq_B E_G$. 

A Polish group that is the most relevant to this paper is $S_\infty$, the group of all permutations of $\mathbb{N}$. Equipped with the subspace topology of $\mathbb{N}^\mathbb{N}$, $S_\infty$ is a Polish group. The universal $S_\infty$-orbit equivalence relation $E_{S_\infty}$ is one of the most well-studied benchmark equivalence relation in the descriptive set theory of equivalence relations. It is known to be analytic and not Borel, and Borel bireducible with many natural equivalence relations such as the isomorphism relation for all countable graphs, groups, fields, etc. (these are results due to Friedman and Stanley \cite{FS}; also see \cite[Chapter 13]{GaoBook}). In particular, the isomorphism of any Borel class of countable structures is Borel reducible to $E_{S_\infty}$. Thus, one direction of Melleray's Theorem~\ref{thm:Melleray} (the Borel reduction to (v)) follows immediately from Theorem~\ref{thm:GPS} (2). 

The equivalence relation $=^+$ defined in Theorem~\ref{thm:main1} (viii) can easily seen to be an $S_\infty$-orbit equivalence relation. However, it is Borel, hence is strictly Borel reducible to $E_{S_\infty}$. 

If the Polish group $G$ is a discrete countable group, then $E_G$ is always a Borel equivalence relation. Moreover, all of the orbits of $E_G$ is countable. Such equivalence relations are called {\em countable}; namely, a Borel equivalence relation $E$ is {\em countable} if every $E$-equivalence class is countable. Among all countable Borel equivalence relations, $E_{\mathbb{F}_2}$ is the universal one, i.e. $E\leq_B E_{\mathbb{F}_2}$ for any countable Borel equivalence relation, where $\mathbb{F}_2$ is the free group with two generators equipped with the discrete topology. This is a theorem of Jackson, Kechris and Louveau \cite{JKL} (also see \cite[Theorem 7.3.8]{GaoBook}). It follows from results of Hjorth, Kechris and Louveau \cite{HKL} that $E_{\mathbb{F}_2}<_B\ =^+$. Thus $E<_B\ =^+$ for any countable Borel equivalence relation. 

Borel reducibility among countable Borel equivalence relations is very complicated. In this paper we only mention the following basic concepts. First, the simplest, as well as the least complex, Borel equivalence relation with uncountably many equivalence classes is the equality relation on $2^\mathbb{N}$, which we denote by $=$. An equivalence relation is {\em smooth} if it is Borel reducible to $=$. Thus smooth equivalence relations are the least complex in the Borel reducibility hierarchy among Borel equivalence relations with uncountably many equivalence classes. Among nonsmooth Borel equivalence relations, the least complex one is the equivalence relation $E_0$ on $2^\mathbb{N}$ defined as
$$ xE_0 y\iff \exists n\ \forall m>n\ x(m)=y(m). $$
This is a celebrated theorem of Harrington, Kechris and Louveau \cite{HaKL}. $E_0$ is clearly a countable Borel equivalence relation. We call a countable Borel equivalence relation {\em hyperfinite} if it is Borel reducible to $E_0$.

We have the following reducibility relations among the equivalence relations considered in this section:
$$ =\ \ <_B\ E_0\ <_B\ E_{\mathbb{F}_2}\ <_B\ =^+\ <_B\ E_{S_\infty}. $$

Next we review the notion of amenability. A countable group $G$ is {\em amenable} if there is a left-invariant finitely additive probability measure on $G$.  One can also define a notion of amenability for countable Borel equivalence relations as follows. Suppose that $\mu$ is a Borel measure on a standard Borel space $X$ and $E$ is a countable Borel equivalence relation on $X$. We say that $E$
 is {\em $\mu$-amenable} if there is a $\mu$-measurable assignment $x\mapsto \nu_x$ such that:
\begin{itemize}
\item each $\nu_x$ is a finitely additive probability measure on $X$ satisfying that $\nu_x([x]_E)=1$, where $[x]_E$ is the $E$-equivalence class of $x$;
\item if $xEy$ then $\nu_x=\nu_y$. 
\end{itemize}
Here the $\mu$-meaurability of the assignment $x\mapsto \nu_x$ means that for any Borel subset $B$ of $X\times X$, the map
$$ x\mapsto \nu_x(\{ y\in X\colon (x,y)\in B\}) $$
is $\mu$-measurable. Now, a countable Borel equivalence relation $E$ on $X$ is {\em amenable} if it is $\mu$-amenable with respect to every
 Borel measure $\mu$ on $X$. If $G$ is an amenable countable group and $G\curvearrowright X$ is a Borel action, then $E^X_G$ is an amenable, countable Borel equivalence relation. It is known that $E_{\mathbb{F}_2}$ is not amenable (see, e.g., \cite[Proposition 7.4.4 and Theorem 7.4.8]{GaoBook}). Thus, if $E$ is a nonsmooth, amenable countable Borel equivalence relation, then $E_0\leq_B E<_B E_{\mathbb{F}_2}$. 
 
 Finally, a countable Borel equivalence relation $E$ on a standard Borel space $X$ is {\em treeable} if there is a Borel assignment $x\mapsto T_x$ such that 
 \begin{itemize}
 \item each $T_x$ is an acyclic graph (tree) with $[x]_E$ as the vertex set;
 \item if $xEy$ then $T_x=T_y$.
 \end{itemize}
 Here the Borelness of the assignment $x\mapsto T_x$ means that the edge relation $\{(x,y,z)\in X^3\colon \{y,z\}\in T_x\}$ is Borel. Hyperfinite equivalence relations are treeable, and $E_{\mathbb{F}_2}$ is not treeable (\cite[Corollary 3.28]{JKL}). A recent result of Naryshkin and Vaccaro \cite{NV} states that if an orbit equivalence relation $E$ is induced by a Borel action of a countable amenable group, then $E$ is hyperfinite if and only if $E$ is treeable. 

For a Bore equivalence relation $E$ which is not necessarily countable, and for a property $\mathsf{P}\in \{\mbox{countable}, \mbox{treeable}, \mbox{amenable}, \mbox{hyperfinite}\}$, we say that $E$ is {\em virtually $\mathsf{P}$} if $E$ is Borel reducible to a countable Borel equivalence relation with property $\mathsf{P}$. This is a new terminology which was previously called essentially $\mathsf{P}$ in the literature. Following \cite[Section 4.1]{KechrisBookNew}, we now say that $E$ is {\em essentially $\mathsf{P}$} if $E$ is Borel bireducible with a countable Borel equivalence relation with property $\mathsf{P}$. It is known that essentially countable and virtually countable are different concepts (\cite{Hj}). However, by the Harrington--Kechris--Louveau dichotomy theorem (see, e.g., \cite[Theorem 6.3.1]{GaoBook}), a Borel equivalence relation is virtually hyperfinite if and only if it is essentially hyperfinite.

\subsection{Standard Borel spaces of Cantor minimal systems}
In this subsection we establish some notation and basic results about the space of all Cantor minimal systems. Our treatment follows \cite{GL} unless otherwise indicated.

A {\em Cantor set} is a $0$-dimensional compact Polish space without isolated points. It is well-known that all Cantor sets are homeomorphic to $\mathcal{C}=2^{\mathbb{Z}}=\{0,1\}^\mathbb{Z}$. Let $\Aut(\mathcal{C})$ be the group of all autohomeomorphisms of $\mathcal{C}$ where the group operation is composition. Equipped with the supnorm metric, $\Aut(\mathcal{C})$ becomes a Polish group. Let $M(\mathcal{C})$ be the set of all minimal homeomorphisms in $\Aut(\mathcal{C})$. Then $M(\mathcal{C})$ is a $G_\delta$ subset of $\Aut(\mathcal{C})$, and hence is a Polish space itself. We view $M(\mathcal{C})$ as the space of all Cantor minimal systems. The orbit equivalence of Cantor minimal systems is thus an analytic equivalence relation on $M(\mathcal{C})$. It follows from Melleray's Theorem~\ref{thm:Melleray} that this equivalence relation is not Borel.

The set of all odometers is a dense $G_\delta$ subset of $M(\mathcal{C})$ (see, e.g., \cite[Proposition 3.6]{GL} or \cite[Lemma 4.1]{Melleray}). More generally, for each positive integer $K$, the set of all Cantor minimal systems of topological rank at most $K$ is also a $G_\delta$ subset of $M(\mathcal{C})$ (see, e.g., \cite[Corollary 3.4]{GL}). 
Thus, for any positive integer $K$, the set of all Cantor minimal systems of topological rank $K$ is a Borel subset of $M(\mathcal{C})$, and hence is a standard Borel space. It follows that the set of all Cantor minimal systems of finite topological rank is a standard Borel space as well.

By a theorem of Downarowicz and Maass \cite{DM}, there are only two kinds of Cantor minimal systems of finite topological rank; when it has topological rank $1$, it is an odometer, and when it has topological rank $K>1$, it is topologically conjugate to a minimal subshift. We recall the definition of subshifts. Let $\mathsf{A}$ be a finite set (which we call the {\em alphabet}). The space $\mathsf{A}^\mathbb{Z}$ is a Cantor set. Consider the {\em shift action} $\mathbb{Z}\curvearrowright \mathsf{A}^\mathbb{Z}$: for any $g\in \mathbb{Z}$ and $x\in \mathsf{A}^\mathbb{Z}$,
$$ \varphi_g(x)(n)=(g\cdot x)(n)=x(n+g) $$
for any $n\in\mathbb{Z}$. Then $(\mathsf{A}^\mathbb{Z}, \varphi_1)$ is a Cantor system. Given a Cantor system $(X, \varphi)$, a subset $A$ of $X$ is {\em $\varphi$-invariant} if $\varphi(A)=A$. A {\em subshift} $X$ is a closed $\varphi_1$-invariant subset of $\mathsf{A}^\mathbb{Z}$. If $X$ is a subshift, then $(X, \varphi_1\upharpoonright X)$ is a Cantor system. 

Since the set of all odometers is a comeager subset of $M(\mathcal{C})$, the set of all minimal subshifts is thus a meager subset of $M(\mathcal{C})$. Following Pavlov and Schmieding \cite{PS}, one can consider a suitable Polish space of subshifts which include all minimal subshifts (we do not give the details here since we do not use them in this paper). In this Polish space, the set of all minimal subshifts is a comeager Borel subset (see \cite[Theorem 1.3]{PS}), hence it is possible to both consider it as a standard Borel space and speak of genericity among minimal subshifts. According to \cite[Theorem 1.3]{PS}, the set of all regular Toeplitz subshifts of topological rank $2$ is a comeager set of minimal subshifts. Each of the concepts in this statement corresponds to a Borel set. Moreover, it is shown in \cite[Theorem 1.3]{PS} that a generic minimal subshift is orbit equivalent to the universal odometer. This is in contrast with our results (Theorems~\ref{thm:main2} and \ref{thm:main3}) that for any $K>1$, the orbit equivalence of minimal subshifts of topological rank $K$ is not smooth.

Given a dynamical systems $(X, \varphi)$,  a Borel measure $\mu$ on $X$  is {\em $\varphi$-invariant} if $\mu(B) =\mu(\varphi^{-1}(B))$ for all Borel subsets $B$ of $X$. $\mu$ is {\em ergodic} if for every $\varphi$-invariant Borel set $B $, either $\mu(B)=0$ or $\mu(X\setminus B)=0$.  The  ergodicity of $\mu$ is equivalent to the following: if $\mu=a\mu_1+(1-a)\mu_2$  for two measures and some $0<a<1$, then $\mu=\mu_1=\mu_2$.

Given a Cantor space $X$, we denote by $\mbox{Prob}(X)$ the set of Borel probability measures on  $X$.   Note  that a Borel probability measure on $X$ is uniquely determined by its values on clopen sets, and that any finitely additive probability measure on $\mbox{Clopen}(X)$ extends to an element of  $\mbox{Prob}(X)$. Let $C(X)$ denotes the space of all real-valued continuous functions on $X$. Let $C(X)^*$ be the dual space of $C(X)$ equipped  with the weak*  topology. We can regard  $\mbox{Prob}(X)$ as a subspace of $C(X)^*$, where the inherited topology on $\mbox{Prob}(X)$ is a natural compact topology induced by the maps $\mu \mapsto \mu(U)$,  where $U$ ranges over all clopen subsets of $X$.  Define a metric on $\mbox{Prob}(X)$ as 
$$d(\mu, \nu)=\sup_{f\in C(X)}\left| \int_X  f\,d\mu- \int_X  f\, d\nu\right|$$ for any $\mu, \nu \in \mbox{Prob}(X)$.  This is a  compatible metric for the weak* topology, which makes $\mbox{Prob}(X)$ a compact Polish space.

Given a Cantor system $(X,\varphi)$, the set of all $\varphi$-invariant Borel probability measures on $X$ is nonempty, and we denote it by $\mbox{M}_\varphi$. Note that $\mbox{M}_\varphi$ is compact subset of $\mbox{Prob}(X)$. Let $\mbox{ME}_\varphi$ be the set of all ergodic measures on $(X,\varphi)$. Suppose that $(X,\varphi)$ is  a Cantor minimal system with finitely many ergodic measures. Then $\mbox{EM}_\varphi$ is also a compact subset of $\mbox{Prob}(X)$.  Let $\mu_1,\dots, \mu_n$ be  an enumeration of all of its ergodic measures. Then 
for every $\varphi$-invariant measure $\mu\in \mbox{Prob}(X)$,  there are positive  numbers $a_1,\dots, a_n$ such that $a_1+\cdots +a_n=1$ and 
$$\mu=\sum_{i=1}^na_i\mu_i.$$

Given a compact Polish space $Z$, we denote by $\calK(Z)$ the space  of  all compact   subsets of $Z$ equipped with the Vietoris topology.  Then $\calK(Z)$ is again a compact Polish space (see, e.g., \cite[Theorem 4.26]{KechrisBook}).

In the next section we will consider the orbit equivalence of Cantor minimal systems with finitely many ergodic measures. In the following we show that it is a Borel class and that the map that assigns the set of ergodic measures to each such system is a Borel map. 


\begin{proposition}
For any positive integer $K\ge1$, the class of Cantor systems with exactly $K$ ergodic measures is a Borel subset of $\Aut(\mathcal{C})$, and the function from this class to $\calK({\rm Prob}(\mathcal{C}))$ that sends a Cantor system to the set of its ergodic measures is Borel.
\end{proposition}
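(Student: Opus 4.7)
The plan is to establish both assertions through the following steps. First, I would show that the map $\varphi\mapsto\mbox{M}_\varphi$ is Borel from $\Aut(\mathcal{C})$ into $\calK(\mbox{Prob}(\mathcal{C}))$. For this, observe that the graph $G=\{(\varphi,\mu)\colon \mu\in\mbox{M}_\varphi\}$ is closed, since $\mu\in\mbox{M}_\varphi$ iff $\mu(\varphi^{-1}(U))=\mu(U)$ for every clopen $U\subseteq\mathcal{C}$ (a countable family), and each such condition is jointly continuous because uniform convergence $\varphi_n\to\varphi$ in $\Aut(\mathcal{C})$ forces $\varphi_n^{-1}(U)=\varphi^{-1}(U)$ for all but finitely many $n$. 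Since $\mbox{Prob}(\mathcal{C})$ is compact, the closedness of $G$ implies that the section map $\varphi\mapsto\mbox{M}_\varphi$ is upper semi-continuous into the Vietoris hyperspace, and in particular Borel.

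Next, I would use Krein--Milman to convert the cardinality of $\mbox{EM}_\varphi$ into a closed condition on $\mbox{M}_\varphi$. For any compact convex $L\subseteq\mbox{Prob}(\mathcal{C})$ one has
$$|\mbox{ex}(L)|\leq K \iff L=\mbox{conv}(\{\nu_1,\ldots,\nu_K\}) \text{ for some } \nu_1,\ldots,\nu_K\in\mbox{Prob}(\mathcal{C}),$$
the forward direction using Krein--Milman together with the fact that the convex hull of a finite set in the metrizable locally convex space $\mbox{Prob}(\mathcal{C})$ is already closed. The map $(\nu_1,\ldots,\nu_K)\mapsto\mbox{conv}(\{\nu_1,\ldots,\nu_K\})$ is continuous from $\mbox{Prob}(\mathcal{C})^K$ into $\calK(\mbox{Prob}(\mathcal{C}))$, so its image
$$A_K=\{L\in\calK(\mbox{Prob}(\mathcal{C}))\colon |\mbox{ex}(L)|\leq K\}$$
is compact and hence closed. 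Therefore $\{\varphi\colon |\mbox{EM}_\varphi|\leq K\}$ is the Borel preimage of $A_K$ under $\varphi\mapsto\mbox{M}_\varphi$, and
$$\{\varphi\colon |\mbox{EM}_\varphi|=K\}=\{|\mbox{EM}_\varphi|\leq K\}\setminus\{|\mbox{EM}_\varphi|\leq K-1\}$$
is Borel.

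For the Borelness of $\varphi\mapsto\mbox{EM}_\varphi$, I would first verify that $E=\{(\varphi,\mu)\colon \mu\in\mbox{EM}_\varphi\}$ is Borel by writing the non-ergodic part as
$$\bigcup_{n,m\geq 1}\pi\bigl(\{(\varphi,\mu,\nu_1,\nu_2,a)\colon \nu_1,\nu_2\in\mbox{M}_\varphi,\ d(\nu_1,\nu_2)\geq\tfrac{1}{m},\ a\in[\tfrac{1}{n},1-\tfrac{1}{n}],\ \mu=a\nu_1+(1-a)\nu_2\}\bigr),$$
where $\pi$ projects onto $(\varphi,\mu)$. Each bracketed set is closed (using the closedness of $G$ from the first step), and projection along the compact variables $(\nu_1,\nu_2,a)$ preserves closedness, so the union is $F_\sigma$ and $E$ is Borel. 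Restricting $E$ to the Borel set $\{\varphi\colon |\mbox{EM}_\varphi|=K\}$ yields a Borel set whose sections have cardinality exactly $K$, so the Lusin--Novikov uniformization theorem provides Borel functions $f_1,\ldots,f_K$ with $\mbox{EM}_\varphi=\{f_1(\varphi),\ldots,f_K(\varphi)\}$. Composing with the continuous map $(y_1,\ldots,y_K)\mapsto\{y_1,\ldots,y_K\}$ into $\calK(\mbox{Prob}(\mathcal{C}))$ completes the proof.

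The main obstacle is the Krein--Milman reformulation in the second paragraph, which converts what looks like an existential quantifier over a non-compact parameter (the choice of extreme points) into a closed condition on $\mbox{M}_\varphi$ inside the compact Vietoris hyperspace. Without this reduction one would be left with an analytic projection rather than a Borel set. The remaining manipulations (projection of closed sets along compact fibers, and the standard appeal to Lusin--Novikov) are routine.
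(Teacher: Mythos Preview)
Your argument is correct and takes a genuinely different route from the paper's proof. The paper establishes that $\{\varphi:|\mbox{ME}_\varphi|\leq K\}$ is Borel by giving two explicit first-order characterizations (one with an existential quantifier over $\mbox{M}_\varphi$, one universal), showing the set is simultaneously analytic and coanalytic, and then invoking Suslin's theorem; for the Borelness of $\varphi\mapsto\mbox{ME}_\varphi$ it writes down a Borel description of the graph using the Kuratowski--Ryll-Nardzewski selectors $S_m$. Your approach instead exploits convex geometry: via Krein--Milman you recognize $\{\varphi:|\mbox{ME}_\varphi|\leq K\}$ as the preimage under $\varphi\mapsto\mbox{M}_\varphi$ of the \emph{closed} set $A_K\subseteq\calK(\mbox{Prob}(\mathcal{C}))$ of $K$-point convex hulls, bypassing Suslin entirely; and for the second assertion you show directly that the ergodic relation $E$ is Borel (its complement in the closed graph $G$ being $F_\sigma$ by projection along compact fibers) and then apply Lusin--Novikov. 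Your approach yields sharper topological information (in particular, $\{|\mbox{ME}_\varphi|\leq K\}$ is actually closed in $\Aut(\mathcal{C})$ once one knows $\varphi\mapsto\mbox{M}_\varphi$ is upper semicontinuous), while the paper's approach stays closer to pure definability and avoids invoking Krein--Milman and Lusin--Novikov. Both are valid; yours is arguably cleaner.
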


\begin{proof} Fix a positive integer $K\geq 1$. 
Without loss of generality we may assume $X=\mathcal{C}$.  By \cite[Lemma 4.2]{Melleray}, the map from $\rm{Aut(\mathcal{C})}$ to $\calK(\rm{Prob}(\mathcal{C}))$ that sends $\varphi$ to $ \mbox{\rm M}_\varphi$ is a Borel map.   By the Kuratowski--Ryll-Nardzewski Selection Theorem (see, e.g., \cite[Theorem 12.13]{KechrisBook}), there is a sequence of Borel functions 
$$S_m\colon \calK({\rm Prob}(\mathcal{C}))\to {\rm Prob}(\mathcal{C}) $$
for $m\in\mathbb{N}$ such that for any $F\in \calK(\mbox{\rm Prob}(\mathcal{C})$, $\{S_m(F)\colon m\in\mathbb{N}\}$ is dense in $F$. 

We claim that  $|\mbox{ME}_\varphi|\leq K $  if and only if 
\begin{quote}
there exist $n\leq K$ and $x_1,\cdots, x_n\in \mbox{\rm M}_\varphi$ such that for every positive rational $\epsilon>0$ and every $m\in\mathbb{N}$, there are nonnegative rationals $a_1,\dots, a_n$ such that $a_1+\cdots +a_n=1$ and 
$$d\left(\sum_{i=1}^na_ix_i, S_m(\mbox{\rm M}_\varphi)\right)<\epsilon.$$   
\end{quote}
For the direction of the necessity we only need to take $x_1,\cdots, x_n$ as the ergodic measures of $(X, \varphi)$.  Thus it suffices to show the direction of the sufficiency. Toward a contradiction, assume the displayed condition holds but there are pairwise distinct $y_1,\dots , y_{n+1} \in \rm{ME}_\varphi$. By our hypothesis, each $y_k$, $k=1,\dots, n+1$, can be expressed as a  linear  combination of $x_1, \dots, x_n$ and the coefficients are nonnegative reals that sum to 1. 
By ergodicity, for each $1\leq k\leq n+1$, there is $1\leq i\leq n$ such that $y_k=x_i$. This implies that for some $1\leq j, k\leq n+1$, $j\neq k$, we have $y_j=y_k$, a contradiction. This finishes the proof of the claim. By the claim, the class of Cantor systems with at most $K$ ergodic measures is an analytic subset of $\Aut(\mathcal{C})$.

On the other hand, we claim that $|\mbox{ME}_\varphi|\leq K $  if and only if 
\begin{quote} for any $x_1,\cdots, x_K, x_{K+1}\in \rm M_\varphi$, there exists $1\leq n \leq K+1$ such that for every positive rational $\epsilon>0$, there are nonnegative  rationals $a_1,\cdots, a_{K+1}$ such that $a_n=0$,  $a_1+\cdots+a_{K+1}=1$ and $$d\left(\sum_{i=1}^{K+1}a_ix_i,x_n\right)< \epsilon. $$ 
\end{quote}
The proof of this claim  is similiar as the above.  By this claim  we know that the class of Cantor systems with at most $K$ ergodic measures is a coanalytic subset of $\Aut(\mathcal{C})$. 

Combining the two claims and applying Suslin's Theorem (see, e.g., \cite[Theorem 14.11]{KechrisBook}), we get that the class of Cantor systems with at most $K$ ergodic measures is a Borel subset of $\Aut(\mathcal{C})$. Let $M_K$ denote the subset of $\Aut(\mathcal{C})$ consisting of all Cantor systems with exactly $K$ ergodic measures. Then $M_K$ is also Borel.

Let $\Theta\colon M_K\to \calK({\rm Prob}(\mathcal{C}))$ be the map that sends a Cantor system $(X, \varphi)$ in $M_K$ to the set of its ergodic measures $\mbox{\rm ME}_\varphi$. Then $\Theta$ is Borel if and only if its graph $\{(\varphi, \mbox{\rm ME}_\varphi)\colon \varphi\in M_K\}$ is Borel. Now note that for $\varphi\in M_K$ and $F\in \calK(\mbox{\rm Prob}(\mathcal{C})$, $F=\Theta(\varphi)=\mbox{ME}_\varphi$ if and only if $|F|=|\{S_m(F)\colon m\in\mathbb{N}\}|=K$, $F\subseteq \mbox{\rm M}_\varphi$, and if we enumerate the elements of $F$ as $x_1,\cdots, x_K\in \rm{M}_\varphi$, then for every positive rational $\epsilon>0$ and $m\in\mathbb{N}$, there are positive rationals $a_1,\dots, a_K$ such that $a_1+\cdots +a_K=1$ and 
$$d\left(\sum_{i=1}^Ka_ix_i, S_m(\mbox{\rm M}_\varphi)\right)<\epsilon.$$ 
This characterization is Borel, since for instance $F\subseteq \mbox{\rm M}_\varphi$ is Borel by \cite[Exercise 4.29 (ii)]{KechrisBook}. 
\end{proof}

\begin{corollary} \label{cor:KEM}
For any positive integer $K\ge1$, the class of Cantor minimal systems with exactly $K$ ergodic measures is a Borel subset of $M(\mathcal{C})$, and the function from this class to $\calK({\rm Prob}(\mathcal{C}))$ that sends a Cantor minimal system to the set of its ergodic measures is Borel.
\end{corollary}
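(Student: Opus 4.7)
The corollary is essentially an immediate restriction of the preceding proposition, so my plan is to show that the two conclusions transfer to the minimal setting simply by intersecting with a Borel subset.

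First, I would recall from the preliminaries that $M(\mathcal{C})$, the set of minimal autohomeomorphisms, is a $G_\delta$ (hence Borel) subset of $\Aut(\mathcal{C})$, so by Corollary 13.4 of \cite{KechrisBook} it is itself a standard Borel space with the induced Borel structure, and a subset of $M(\mathcal{C})$ is Borel in $M(\mathcal{C})$ if and only if it is Borel in $\Aut(\mathcal{C})$. By the preceding proposition, the set $M_K\subseteq \Aut(\mathcal{C})$ of autohomeomorphisms with exactly $K$ ergodic invariant probability measures is Borel. Therefore the class of Cantor \emph{minimal} systems with exactly $K$ ergodic measures is $M_K\cap M(\mathcal{C})$, an intersection of two Borel sets, hence Borel in both $\Aut(\mathcal{C})$ and $M(\mathcal{C})$.

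Next, for the second assertion, I would appeal directly to the proposition: the map $\Theta\colon M_K\to \calK(\mathrm{Prob}(\mathcal{C}))$ sending $\varphi$ to $\mathrm{ME}_\varphi$ is Borel. The restriction $\Theta\!\upharpoonright\!(M_K\cap M(\mathcal{C}))$ is then automatically Borel, since the preimage of any Borel subset of $\calK(\mathrm{Prob}(\mathcal{C}))$ under the restriction is the intersection of its preimage under $\Theta$ (which is Borel in $M_K$) with the Borel set $M(\mathcal{C})$.

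There is no real obstacle here: the whole argument is a one-line consequence of the proposition plus the fact that $M(\mathcal{C})$ is Borel. The only thing to double-check is the compatibility of the Borel structures when passing to the subspace $M(\mathcal{C})$, which is standard and requires nothing beyond \cite[Corollary 13.4]{KechrisBook}.
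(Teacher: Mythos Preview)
Your proposal is correct and matches the paper's approach: the paper states the corollary with no proof, treating it as an immediate consequence of the preceding proposition together with the fact that $M(\mathcal{C})$ is a Borel (indeed $G_\delta$) subset of $\Aut(\mathcal{C})$. Your explicit spelling-out of the intersection-and-restriction argument is exactly the intended one-line justification.
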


\section{Cantor minimal systems with finitely many ergodic measures\label{sec:3}}

In this section we prove Theorem~\ref{thm:main1}.  The classes of Cantor minimal systems considered in Theorem~\ref{thm:main1} are in decreasing order in the sense of inclusion from (i) to (v). Thus it suffices to prove that the orbit equivalence for the dlass in (i) is Borel reducible to $=^+$ and that $=^+$ is Borel reducible to the orbit equivalence for the class in (v).

We will recall the notions and results about Cantor minimal systems as needed.

\subsection{The upper bound}
We first recall some definitions and theorems from \cite{GPS}. 


A countable abelian group $G$ together with a subset $G^+$ is called an {\em ordered group} if  $G^++G^+\subseteq G^+$, $G^+-G^+=G$ and $G^+\cap (-G^+)=\varnothing$. Here $G^+$ is called the {\em positive cone}. We shall write $a \le b$ if $b - a \in G^+$. If $(G, G^+)$ is an ordered group and $H$ is a subgroup of $G$, then we set $H^+=H\cap G^+$. An element $u \in G^+$ is  an  {\em order unit} for $(G,G^+)$ if  for every $a \in G$,  $a \le  nu$ for some $n \in \mathbb N$.  We say that the ordered group $(G,G^+)$ is {\em unperforated} if whenever $a\in G$ and $na\in G^+$ for some $n \in \mathbb N$, we have $a\in G^+$.  

An unperforated ordered group $(G,G^+)$ is a {\em dimension group} if given $a_1$,  $a_2$,  $b_1$,  $b_2 \in G$ with $a_i \le b_j$ for all $i, j =1, 2$, there exists a $c\in G$ with $a_i\le c \le b_j$ for all $i, j =1, 2$.   Let $(G_1, G_1^+, u_1)$ and $(G_2, G_2^+, u_2)$  be  two dimension groups  with distinguished order units.  We say that they are {\em isomorphic}, and write $(G_1, G_1^+, u_1)\cong (G_2, G_2^+, u_2)$, if there exists a group isomorphism $f\colon G_1 \to G_2$  such that  $f(G_1^+)=G_2^+$ and $f(u_1)=u_2$. 

If $(G, G^+)$ is an ordered group, an {\em order ideal} is a subgroup $H$ of $G$ such that $H^+-H^+=H$ and whenever $0\leq a\leq b$ and $b\in H$, we have $a\in H$. A dimension group $(G,G^+)$ is {\em simple}   if $G$  does not contain any nontrivial order ideal. If $G$ is a simple dimension group
it is easily seen that any $a \in G^+\setminus \{0\}$ is an order unit.  Let $G$ be a simple dimension group and let $u \in G^+\setminus \{0\}$.  We say that $a \in G$ is {\em infinitesimal} if for all positive integers $p, q$, we have $-pu\leq qa\leq pu$. Informally, we also write this condition as $ -\epsilon u \le a \le \epsilon u$ for all $0<\epsilon \in \mathbb Q$.  The collection of infinitesimal elements of  $G$ form a subgroup,  {\em the infinitesimal subgroup} of $G$, which we denote by  $\mbox{Inf} (G)$. 

The quotient group $G /\mbox{Inf} (G)$ has a natural induced ordering. Let $a\mapsto \hat{a}$ denote the quotient map from $G$ onto $G/\mbox{Inf}(G)$. Then we can set $\hat{a} > 0$ iff $a > 0$. It is easily seen that $G /\mbox{Inf} (G)$ is a simple dimension group with no infinitesimal elements except 0. If $G$ has distinguished order unit $u$ then $G /\mbox{Inf} (G)$  inherits the distinguished order unit $\hat{u}$.

Let $(X, \varphi)$  be a Cantor minimal system.  Denote by $C(X, \mathbb Z)$ the  set of  the continuous maps from $X$ to $\mathbb Z$. So $C(X, \mathbb Z)$ is a countable abelian group under addition.  Let  $B_ \varphi$  denote the {\em coboundary subgroup} $\{f-f\circ  \varphi^{-1}\colon f \in C(X, \mathbb Z)\}$. Define  $K^0(X, \varphi)$ as the quotient group $C(X, \mathbb Z)/B_ \varphi$ and  let
$$K^0(X, \varphi)^+=\left\{ [f]\colon f  \ge 0, f \in C(X, \mathbb Z)\right\},$$
where $f\mapsto [f]$ denotes the  quotient map. Let $\mathbf 1$ denote the element of $K^0(X, \varphi)^+$  that the constant function $1$ maps to. Then
$K^0(X, \varphi)$ with the positive cone $K^0(X, \varphi)^+$ is a simple,  torsion-free dimension group with  distinguished order unit $\hat{\mathbf 1}$.  




We will use the following result, which was explicitly stated as a part of \cite[Theorem 1.13]{GPS} but can be traced back to \cite[Theorem 5.5]{HPS} and \cite[Corollary 4.2]{EffrosBook}.

\begin{theorem}\label{thm:GPS2} Let $(X,\varphi)$ be a Cantor minimal system. Then
$$\mbox{\rm Inf}\left(K^0(X, \varphi)\right)=\left\{[f]\colon  f\in C(X,\mathbb Z), \int f\,d \mu =0\mbox{ for all } \varphi \mbox{-invariant measures }\mu\right\}.$$
\end{theorem}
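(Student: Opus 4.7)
The plan is to route the argument through the standard bijective correspondence between $\varphi$-invariant probability measures on $X$ and normalized states on the ordered group $(K^0(X,\varphi), K^0(X,\varphi)^+, \mathbf{1})$, and then invoke the general dimension-group principle that, for a simple dimension group with order unit, the infinitesimal elements are exactly those annihilated by every state.

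First, for each $\varphi$-invariant probability measure $\mu$, I would define $\tau_\mu \colon K^0(X,\varphi) \to \mathbb{R}$ by $\tau_\mu([f]) = \int_X f \, d\mu$. The formula descends to the quotient $C(X,\mathbb{Z})/B_\varphi$ precisely because $\mu$ is $\varphi$-invariant: if $f = g - g \circ \varphi^{-1}$ then $\int f\, d\mu = 0$. The resulting map is positive and sends $\mathbf{1}$ to $1$, hence is a normalized state. This already yields the easy inclusion of the theorem: if $[f] \in \mbox{Inf}(K^0(X,\varphi))$ then $-p\mathbf{1} \le q[f] \le p\mathbf{1}$ for all $p,q \in \mathbb{N}$, and applying $\tau_\mu$ gives $|q \int f\, d\mu| \le p$ for all such $p,q$, forcing $\int f\, d\mu = 0$.

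For the reverse inclusion I would combine two ingredients. First, an existence theorem: every normalized state on $(K^0(X,\varphi), K^0(X,\varphi)^+, \mathbf{1})$ is of the form $\tau_\mu$ for a unique $\varphi$-invariant probability measure $\mu$, essentially \cite[Theorem 5.5]{HPS}. This is proved by extending a state linearly to a positive functional on $C(X,\mathbb{R})$ and applying the Riesz--Markov representation theorem, with $\varphi$-invariance of the resulting measure inherited from the vanishing of the state on the coboundary subgroup $B_\varphi$. Second, the dimension-group characterization \cite[Corollary 4.2]{EffrosBook}: in a simple dimension group with order unit, $a \in \mbox{Inf}(G)$ if and only if $\tau(a) = 0$ for every normalized state $\tau$. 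Granting both, if $\int f\, d\mu = 0$ for every invariant $\mu$, then every normalized state vanishes on $[f]$, whence $[f] \in \mbox{Inf}(K^0(X,\varphi))$.

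The main obstacle is the nonobvious direction of the dimension-group characterization, which asserts that if $\tau(g) > 0$ for every normalized state $\tau$ then $g$ is an order unit; applied to the elements $p\mathbf{1} \pm q[f]$ (each of whose state values equals $p > 0$ whenever all states annihilate $[f]$) this converts the vanishing of states into the infinitesimality bounds $-p\mathbf{1} \le q[f] \le p\mathbf{1}$. This last step rests on the order-dense embedding of a simple dimension group with order unit into $C(S(G,u), \mathbb{R})$ via evaluation at states, combined with a Hahn--Banach separation argument on the compact convex state space.
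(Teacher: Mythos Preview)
Your proposal is correct and follows precisely the route the paper itself indicates: the paper does not supply its own proof of this theorem but cites it as a known result, explicitly tracing it to \cite[Theorem 5.5]{HPS} (the bijection between invariant measures and states on $K^0$) and \cite[Corollary 4.2]{EffrosBook} (infinitesimals in a simple dimension group with order unit are exactly the elements killed by every state). Your argument unpacks exactly these two ingredients and combines them in the standard way, so there is nothing to add or correct.
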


Suppose that $(X,\varphi)$ is  a Cantor minimal system with finitely many ergodic measures. Let $\mu_1,\dots, \mu_n$ be a nonrepetitive enumeration of its ergodic measures. Define $\Gamma_\varphi$ as 
$$\Gamma_\varphi= \left\{\left(\int f\,d\mu_1,\dots , \int f\,d\mu_n\right)\colon  f \in C(X,\mathbb Z)\right\}.$$
It is clear that $\Gamma_\varphi$ is a subgroup of $\mathbb R^n$ and 
$$\bigl(\Gamma_\varphi, \Gamma_\varphi\cap\bigl( \mathbb R^+\bigr)^n, 1^n \bigr)$$ is a dimension group with distinguished order unit, where $1^n=(1,\dots, 1)\in \mathbb{R}^n$.

We first note the following direct corollary of Theorem~\ref{thm:GPS}.

\begin{lemma}\label{lem:dim} Let $(X, \varphi)$ be a Cantor minimal system with finitely many ergodic measures $\mu_1,\dots, \mu_n$. Then the following dimension groups are isomorphic:
$$ \Bigl(K^0(X, \varphi)/\mbox{\rm Inf}\left(K^0(X,\varphi)\right), K^0(X, \varphi)^+/\mbox{\rm Inf}\left(K^0(X, \varphi)\right), \hat{\mathbf 1}\Bigr)\cong \bigl(\Gamma_\varphi, \Gamma_\varphi\cap (\mathbb{R}^+)^n, 1^n\bigr). $$
\end{lemma}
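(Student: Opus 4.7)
The natural candidate for the isomorphism is the evaluation map
$$\Psi\colon K^0(X,\varphi)\longrightarrow \Gamma_\varphi, \qquad [f]\longmapsto \left(\int_X f\,d\mu_1, \dots, \int_X f\,d\mu_n\right).$$
First I would verify that $\Psi$ is a well-defined surjective group homomorphism. Well-definedness comes from the $\varphi$-invariance of each $\mu_i$, which forces the coboundary group $B_\varphi$ into the kernel of the underlying map $C(X,\mathbb{Z})\to\mathbb{R}^n$, and surjectivity is immediate from the definition of $\Gamma_\varphi$.

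Next I would identify $\ker\Psi$ using Theorem~\ref{thm:GPS2}. Since $(X,\varphi)$ has only finitely many ergodic measures, every $\varphi$-invariant probability measure on $X$ is a convex combination of $\mu_1,\dots,\mu_n$, so the condition $\int f\,d\mu=0$ for every $\varphi$-invariant probability measure $\mu$ is equivalent to $\int f\,d\mu_i=0$ for each $i=1,\dots,n$. Combined with Theorem~\ref{thm:GPS2}, this gives $\ker\Psi=\mbox{\rm Inf}(K^0(X,\varphi))$, and hence an induced group isomorphism
$$\widetilde{\Psi}\colon K^0(X,\varphi)/\mbox{\rm Inf}(K^0(X,\varphi))\xrightarrow{\ \cong\ }\Gamma_\varphi.$$
The distinguished order unit is trivially preserved, since $\widetilde{\Psi}(\hat{\mathbf 1})=(1,\dots,1)=1^n$.

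The main obstacle is to verify that $\widetilde{\Psi}$ is an \emph{order} isomorphism rather than just a group isomorphism. One inclusion of positive cones is straightforward: if $[f]$ is represented by some $f\geq 0$, then each $\int f\,d\mu_i\geq 0$, so $\widetilde{\Psi}$ sends $K^0(X,\varphi)^+/\mbox{\rm Inf}(K^0(X,\varphi))$ into $\Gamma_\varphi\cap(\mathbb{R}^+)^n$. For the reverse inclusion I would appeal to the representation theorem for simple dimension groups (\cite[Corollary 4.2]{EffrosBook}, together with \cite[Theorem 5.5]{HPS}): the quotient $K^0(X,\varphi)/\mbox{\rm Inf}(K^0(X,\varphi))$ is a simple dimension group with no infinitesimals whose state space is canonically identified with $\mbox{\rm M}_\varphi$ via the pairing $([f],\mu)\mapsto\int f\,d\mu$, and the representation theorem yields an order embedding into the continuous affine functions on $\mbox{\rm M}_\varphi$ whose positive cone is determined by positivity on the state space. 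Since $\mbox{\rm M}_\varphi$ is the simplex with vertices $\mu_1,\dots,\mu_n$, affine functions on $\mbox{\rm M}_\varphi$ correspond bijectively to vectors in $\mathbb{R}^n$ via evaluation at the vertices, and positivity on the simplex reduces to positivity at each vertex. This matches the positive cone $\Gamma_\varphi\cap(\mathbb{R}^+)^n$, completing the verification.
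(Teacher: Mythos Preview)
Your proposal is correct and follows essentially the same approach as the paper: both define the same evaluation map, use Theorem~\ref{thm:GPS2} to identify the kernel with $\mbox{\rm Inf}(K^0(X,\varphi))$, and then argue the positive cones match. The only cosmetic difference is that for the order part the paper directly invokes \cite[Theorem~1.13]{GPS} to characterize $K^0(X,\varphi)^+$ as $\{[f]:\int f\,d\mu>0\text{ for all }\mu\in M_\varphi\}\cup\mbox{\rm Inf}(K^0(X,\varphi))$, whereas you recover this from the state-space representation theorem in \cite{EffrosBook} and \cite{HPS}---which, as the paper itself notes, is exactly what \cite[Theorem~1.13]{GPS} is built on.
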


\begin{proof}
 Using Theorem~\ref{thm:GPS2}, one readily checks that the map
$$ [f]+\mbox{\rm Inf}\left(K^0(X, \varphi)\right)\mapsto \left(\int f\,d\mu_1,\dots , \int f\,d\mu_n\right) $$
is well defined and injective. 
By \cite[Theorem 1.13]{GPS},
\begin{align*}
K^0(X,\varphi)^+ &
=\left\{[f]\in K^0(X,\varphi)\colon \int f\,d\mu>0 \mbox{ for all }  \mu \in M_{\varphi}\right\}\cup\mbox{Inf}\left(K^0(X,\varphi)\right)\\
& =\left\{[f]\in K^0(X,\varphi)\colon \int f\,d\mu_i> 0 \mbox{ for all }  1\leq i\leq n \right\}\cup\mbox{Inf}\left(K^0(X,\varphi)\right).
\end{align*}
It is then easily seen that the map is a group isomorphism that sends the positive cone onto the positive cone and $\hat{\mathbf 1}$ to $1^n$. 
\end{proof}

If $G$ is  a subgroup of $\mathbb R^n$ and $\phi\in \mbox{\rm Sym}(n)$, we define
$$ \phi(G)=\left\{(r_{\phi^{-1}(1)}, \dots, r_{\phi^{-1}(n)})\colon (r_1, \dots, r_n)\in G\right\}. $$
Then $\phi(G)$ is still a subgroup of $\mathbb{R}^n$. The following is a characterization of orbit equivalence for Cantor minimal systems with finitely many ergodic measures.

\begin{lemma}\label{lem:OEkey}
Let  $(X,\varphi)$ and $(Y,\psi)$ be two Cantor minimal systems with finitely many ergodic measures. Then  $(X,\varphi)$ and $(Y,\psi)$ are orbit equivalent if and only if $(X,\varphi)$ and $(Y,\psi)$ have the same number of  ergodic measures, say $n$,  and there exists $\phi \in \mbox{\rm Sym}(n)$ such that $\Gamma_{\varphi}=\phi(\Gamma_{\psi})$.
\end{lemma}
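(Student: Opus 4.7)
The plan is to combine Theorem~\ref{thm:GPS} with Lemma~\ref{lem:dim}, translating the Giordano--Putnam--Skau characterization of orbit equivalence into a concrete statement about the subgroups $\Gamma_\varphi$ and $\Gamma_\psi$ of $\mathbb{R}^n$.

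For the forward direction, I would suppose $(X,\varphi)$ and $(Y,\psi)$ are orbit equivalent and invoke Theorem~\ref{thm:GPS}(3) to obtain a homeomorphism $f\colon X\to Y$ whose push-forward $\mu\mapsto f_*\mu$ maps $\mbox{M}_\varphi$ bijectively onto $\mbox{M}_\psi$. Since $f_*$ is affine on $\mbox{Prob}(X)$ and both $\mbox{M}_\varphi$ and $\mbox{M}_\psi$ are convex, the restriction is an affine homeomorphism of simplices, hence carries extreme points to extreme points; that is, it restricts further to a bijection $\mbox{ME}_\varphi\to\mbox{ME}_\psi$. In particular the two systems have the same number $n$ of ergodic measures. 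Fixing enumerations $\mbox{ME}_\varphi=\{\mu_1,\dots,\mu_n\}$ and $\mbox{ME}_\psi=\{\nu_1,\dots,\nu_n\}$ of distinct measures, I obtain a permutation $\phi\in\mbox{Sym}(n)$ defined by $f_*\mu_i=\nu_{\phi(i)}$. For every $g\in C(Y,\mathbb{Z})$, the change-of-variables formula gives $\int_X g\circ f\,d\mu_i=\int_Y g\,d\nu_{\phi(i)}$; since $g\mapsto g\circ f$ is a bijection $C(Y,\mathbb{Z})\to C(X,\mathbb{Z})$, unpacking the definition of $\phi(\Gamma_\psi)$ yields the required identity $\Gamma_\varphi=\tau(\Gamma_\psi)$ with $\tau=\phi^{-1}$.

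For the reverse direction, assume the two systems have the same number $n$ of ergodic measures and that $\Gamma_\varphi=\phi(\Gamma_\psi)$ for some $\phi\in\mbox{Sym}(n)$. The coordinate-permutation map on $\mathbb{R}^n$ associated with $\phi$ is a group automorphism of $\mathbb{R}^n$ that preserves the positive cone $(\mathbb{R}^+)^n$ (since this cone is invariant under coordinate permutations) and fixes the distinguished order unit $1^n$. Restricted to $\Gamma_\psi$, it therefore gives an isomorphism of dimension groups with order units from $(\Gamma_\psi,\Gamma_\psi\cap(\mathbb{R}^+)^n,1^n)$ onto $(\Gamma_\varphi,\Gamma_\varphi\cap(\mathbb{R}^+)^n,1^n)$. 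By Lemma~\ref{lem:dim}, this transports to an isomorphism of $K^0(X,\varphi)/\mbox{Inf}(K^0(X,\varphi))$ and $K^0(Y,\psi)/\mbox{Inf}(K^0(Y,\psi))$ as dimension groups with order units, and Theorem~\ref{thm:GPS}(2) then delivers orbit equivalence.

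The only point requiring some care is the coordinate bookkeeping in the forward direction: one must verify that the permutation of ergodic measures induced by $f_*$ translates into precisely the coordinate-permutation identity $\Gamma_\varphi=\phi(\Gamma_\psi)$ under the specific convention adopted by the paper. This is a direct but easy-to-miscount calculation, and is the only place where the precise definition of $\phi(G)$ plays a role; everything else in the argument is a mechanical application of the cited theorems.
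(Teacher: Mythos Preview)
Your proposal is correct and follows the same approach as the paper: the forward direction uses Theorem~\ref{thm:GPS}(3) to obtain a homeomorphism carrying invariant (hence ergodic) measures to ergodic measures and reads off the permutation, and the converse uses Lemma~\ref{lem:dim} together with Theorem~\ref{thm:GPS}(2). You supply more detail than the paper (the affine/extreme-point argument and the explicit change-of-variables computation), and your caution about the bookkeeping with $\phi$ versus $\phi^{-1}$ is well placed, but the strategy is identical.
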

\begin{proof} Suppose $(X, \varphi)$ and $(Y, \psi)$ are orbit equivalent. By Theorem~\ref{thm:GPS} (3), there is a homeomorphism $f\colon X\to Y$ carrying the $\varphi$-invariant measures on $X$ onto the $\psi$-invariant measures on $Y$. In particular, $f$ carries the ergodic measures for $\varphi$ onto the ergodic measures for $\psi$. Let $n$ be the number of ergodic measures for $\varphi$. Then $f$ induces a $\phi\in \mbox{\rm Sym}(n)$ so that $\Gamma_\varphi=\phi(\Gamma_\psi)$.

Conversely, suppose there is $\phi\in\mbox{\rm Sym}(n)$ such that $\Gamma_\varphi=\phi(\Gamma_\psi)$. Then in particular the groups $\Gamma_\varphi$ and $\Gamma_\psi$ are isomorphic as dimension groups. By Lemma~\ref{lem:dim} and Theorem~\ref{thm:GPS} (2), we have that $(X, \varphi)$ and $(Y, \psi)$ are orbit equivalent.
\end{proof}

The above lemma demonstrates that the orbit equivalence for Cantor minimal systems is a Borel equivalence relation. In the following, we determine its complexity in the Borel reducibility hierarchy.

\begin{theorem}  The orbit equivalence for Cantor minimal systems with finitely many ergodic measures is Borel reducible to $=^+$.
\end{theorem}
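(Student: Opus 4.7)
The plan is to use Lemma~\ref{lem:OEkey} as a complete characterization: two such systems are orbit equivalent iff they share the same number $n$ of ergodic measures and their $\Gamma$-groups lie in the same $\mbox{\rm Sym}(n)$-orbit inside $\mathbb{R}^n$. Accordingly, I will construct a Borel map $F$ sending each $\varphi$ to a sequence $(y_j^\varphi)_j\in(2^{\mathbb{N}})^{\mathbb{N}}$ whose \emph{range} is a complete invariant for this equivalence, thereby reducing the orbit equivalence to $=^+$.

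First, I would Borel-extract the data. By Corollary~\ref{cor:KEM} together with the Kuratowski--Ryll-Nardzewski selection theorem, we obtain $n_\varphi$ and a Borel enumeration $\mu_1^\varphi,\dots,\mu_{n_\varphi}^\varphi$ of the ergodic measures. Fixing once and for all a Borel enumeration $(f_k)_{k\in\mathbb{N}}$ of the countable group $C(\mathcal{C},\mathbb{Z})$ and setting $\gamma_k^\varphi=(\int f_k\,d\mu_1^\varphi,\dots,\int f_k\,d\mu_{n_\varphi}^\varphi)\in\mathbb{R}^{n_\varphi}$ yields a Borel enumeration of $\Gamma_\varphi$. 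Fix also a Borel injection $\mathrm{enc}\colon\bigcup_n\{n\}\times(\mathbb{R}^n)^{<\omega}\to 2^{\mathbb{N}}$ (injective on the disjoint union, so that $n$ is recoverable from the code) and a Borel enumeration $(\phi_j,\vec k_j)_{j\in\mathbb{N}}$ of $\bigcup_n\mbox{\rm Sym}(n)\times\mathbb{N}^{<\omega}$. Define
\[ F(\varphi)_j=\mathrm{enc}\bigl(n_\varphi,\phi_j(\gamma^\varphi_{k_{j,1}}),\dots,\phi_j(\gamma^\varphi_{k_{j,m_j}})\bigr) \]
when $\phi_j\in\mbox{\rm Sym}(n_\varphi)$, and a default value $\mathrm{enc}(n_\varphi,\emptyset)$ otherwise. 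The range of $F(\varphi)$ is then
\[ R(\varphi)=\bigcup_{\phi\in\mbox{\rm Sym}(n_\varphi)}\left\{\mathrm{enc}(n_\varphi,\vec\eta):\vec\eta\in\phi(\Gamma_\varphi)^{<\omega}\right\}. \]

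The direction $(X,\varphi)\sim_{\mathrm{OE}}(Y,\psi)\Rightarrow R(\varphi)=R(\psi)$ is immediate from Lemma~\ref{lem:OEkey}: the condition $\Gamma_\varphi=\phi_0(\Gamma_\psi)$ forces the two orbits $\{\phi(\Gamma_\varphi):\phi\in\mbox{\rm Sym}(n)\}$ and $\{\phi(\Gamma_\psi):\phi\in\mbox{\rm Sym}(n)\}$ to coincide set-theoretically. The main obstacle is the converse: assuming $R(\varphi)=R(\psi)$, one reads off $n_\varphi=n_\psi=n$ from the $n$-slot of $\mathrm{enc}$, but one must then recover the full $\mbox{\rm Sym}(n)$-orbit of $\Gamma_\varphi$ from $R(\varphi)$ alone. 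I plan to handle this in two moves. The first is a pigeonhole argument using finiteness of $\mbox{\rm Sym}(n)$: for each $\phi\in\mbox{\rm Sym}(n)$, enumerate $\phi(\Gamma_\varphi)$ as an increasing union of finite sets $F_1\subseteq F_2\subseteq\cdots$; each $F_k$ sits in some $\phi'_{i(k)}(\Gamma_\psi)$, and since $i(k)$ takes only finitely many values, one single $\phi'$ serves cofinally many $k$, yielding $\phi(\Gamma_\varphi)\subseteq\phi'(\Gamma_\psi)$; applied symmetrically, $\phi'(\Gamma_\psi)\subseteq\phi''(\Gamma_\varphi)$ for some $\phi''\in\mbox{\rm Sym}(n)$. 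The second is a finite-order rigidity argument: applying $\phi^{-1}$ to $\phi(\Gamma_\varphi)\subseteq\phi''(\Gamma_\varphi)$ and letting $\sigma=\phi^{-1}\phi''$, we obtain $\Gamma_\varphi\subseteq\sigma(\Gamma_\varphi)$; iterating and using $\sigma^{|\sigma|}=\mathrm{id}$ produces the cyclic chain $\Gamma_\varphi\subseteq\sigma(\Gamma_\varphi)\subseteq\cdots\subseteq\sigma^{|\sigma|}(\Gamma_\varphi)=\Gamma_\varphi$, forcing equality throughout. Hence $\phi(\Gamma_\varphi)=\phi''(\Gamma_\varphi)=\phi'(\Gamma_\psi)$, so $\Gamma_\psi\in\mbox{\rm Sym}(n)\cdot\Gamma_\varphi$, and orbit equivalence follows from Lemma~\ref{lem:OEkey}.
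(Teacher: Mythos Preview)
Your proposal is correct, and the reduction you build works. The pigeonhole-plus-finite-order argument in the converse direction is clean: from $R(\varphi)=R(\psi)$ you extract $\phi(\Gamma_\varphi)\subseteq\phi'(\Gamma_\psi)\subseteq\phi''(\Gamma_\varphi)$, and the cyclic-chain trick $\Gamma_\varphi\subseteq\sigma(\Gamma_\varphi)\subseteq\cdots\subseteq\sigma^{|\sigma|}(\Gamma_\varphi)=\Gamma_\varphi$ collapses the inclusions to equalities, which is exactly what is needed to invoke Lemma~\ref{lem:OEkey}.

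Your route is genuinely different from the paper's. The paper also starts from Lemma~\ref{lem:OEkey} and the Borel map $\varphi\mapsto(\gamma_k^\varphi)_k$, but then stops short of a direct reduction to $=^+$: it observes that the intermediate equivalence relation $E$ on $(\mathbb{R}^n)^{\mathbb{N}}$ (equality of ranges up to a coordinate permutation by $\mbox{\rm Sym}(n)$) is $\mathbf{\Pi}^0_3$ and is induced by a continuous action of $S_\infty\times\mbox{\rm Sym}(n)$, and then appeals to the Hjorth--Kechris--Louveau theorem \cite[Theorem~2]{HKL} to conclude $E\leq_B{=^+}$. What you gain is an elementary, self-contained argument that avoids this heavy black box entirely; the cost is the extra combinatorics of encoding finite tuples and the pigeonhole/rigidity step, which the HKL machinery absorbs. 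The paper's approach, by contrast, makes transparent \emph{why} the reduction exists at the level of the Borel hierarchy (a $\mathbf{\Pi}^0_3$ relation from an $S_\infty$-type action is always $\leq_B{=^+}$), and would generalize painlessly if the finite group $\mbox{\rm Sym}(n)$ were replaced by something larger.
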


\begin{proof} Fix a positive integer $n$ and let $M_n$ be the class of Cantor minimal systems with exactly $n$ many ergodic measures. By Corollary~\ref{cor:KEM}, $M_n$ is a Borel subset of $M(\mathcal{C})$, and hence a standard Borel space. It suffices to show that the orbit equivalence on $M_n$ is Borel reducible to $=^+$. 

Enumerate all continuous functions from $\mathcal{C}$ to $\mathbb{Z}$ as $(f_i)_{i\in\mathbb{N}}$. For $\varphi\in M_n$, enumerate its ergodic measures as $\mu^\varphi_1,\cdots,\mu^\varphi_n$, which can be accomplished in a Borel way by the Kuratowski--Ryll-Nardzewski Selection Theorem. Define $\Phi\colon M_n\to (\mathbb{R}^n)^\mathbb{N}$ by letting
$$ \Phi(\varphi)=\left(\int f_i\,d\mu^\varphi_1,\cdots,\int f_i\,d\mu^\varphi_n\right)_{i\in\mathbb{N}}. $$
Then $\Phi$ is a Borel function.

Define an equivalence relation $E$ on $(\mathbb{R}^n)^\mathbb{N}$ by
$$ (x_{i,1},\cdots,x_{i,n})_{i\in\mathbb{N}} \mbox{ and } (y_{i,1},\cdots,y_{i,n})_{i\in\mathbb{N}} \mbox{ are $E$-equivalent} $$
if and only if there is $\phi\in \mbox{\rm Sym}(n)$ such that 
$$\left\{ (x_{i,1},\cdots,x_{i,n})\colon i\in\mathbb{N}\right\}=\{(y_{i,\phi(1)},\cdots,y_{i,\phi(n)})\colon i\in\mathbb{N}\}. $$

By Lemma~\ref{lem:OEkey}, $\Phi$ witnesses that the orbit equivalence on $M_n$ is Borel reducible to $E$.  Now note that $E$ is ${\bf\Pi}^0_3$ and it is induced by a continuous action of the group $S_\infty\times S(n)$ on $(\mathbb{R}^n)^\mathbb{N}$. By a theorem of Hjorth, Kechris and Louveau \cite[Theorem 2]{HKL} (also see \cite[Theorem 12.5.5]{GaoBook}), $E$ is Borel reducible to $=^+$.
\end{proof}

This theorem gives the upper bound for the complexity of the orbit equivalence for Cantor minimal systems with finitely many ergodic measures.

\subsection{The lower bound} We consider a special kind of Cantor minimal systems, namely Toeplitz subshifts. 

We first recall some basic definitions and results about Toeplitz subshifts following \cite{Williams}. 

Let $\mathsf{A}$ be a finite set. A subshift $X\subseteq \mathsf{A}^\mathbb{Z}$ is {\em Toeplitz} if $X$ is the closure of $\orb_{\varphi_1}(x)$ for some $x\in \mathsf{A}^\mathbb{Z}$, where $x$ satisfies that for any $k\in\mathbb{Z}$ there is $p\in \mathbb{N}$ such that $x(k)=x(k+np)$ for all $n\in\mathbb{Z}$. Such an $x$ is called a {\em Toeplitz sequence}; and we say that $X$ is {\em generated} by $x$. In the context of subshifts we usually denote the shift map $\varphi_1$ as $\sigma$. When we want to emphasize the alphabet, we also call $x$ an {\em $\mathsf{A}$-Toeplitz sequence} and $X$ an {\em $\mathsf{A}$-Toeplitz subshift}. 

Let $x$ be an $\mathsf{A}$-Toeplitz sequence. For $p\in \mathbb{N}$, let
$$ \mbox{\rm Per}_p(x)=\left\{k\in \mathbb{Z}\colon x(k)=x(k+np) \mbox{ for all $n\in \mathbb{Z}$}\right\}. $$
Fix a new symbol $\Box\not\in \mathsf{A}$. Define the {\em $p$-skeleton} of $x$ as an element $x'\in (A\cup\{\Box\})^\mathbb{Z}$ where 
$$ x'(k)=\left\{\begin{array}{ll} x(k), & \mbox{ if $k\in \mbox{\rm Per}_p(x)$, }\\ \Box, & \mbox{ otherwise.}\end{array}\right. $$
Then the $p$-skeleton of $x$ is periodic with period $p$. We call $p$ an {\em essential period} of $x$ if the $p$-skeleton of $x$ is not periodic with any smaller period. Let $P(x)$ denote the set of all essential periods of $x$. For $p, q\in P(x)$, let $p\sqsubseteq q$ if $p$ is a factor of $q$. Then $(P(x), \sqsubseteq)$ is a directed set (see \cite[Proposition 2.1]{Williams}). For each $p\in P(x)$, define
$$ \delta_p(x)=\displaystyle\frac{1}{p}|\{k\in \mathbb{Z}\colon 0\leq k<p, k\in \mbox{\rm Per}_p(x)\}|. $$
Then it is easily seen that $p\sqsubseteq q$ implies $\delta_p(x)\leq \delta_q(x)$. It follows that the limit of $\delta_p(x)$ for $p\in P(x)$ exists (as a limit of a net), and we denote it as $\delta(x)$. $x$ is called {\em regular} if $\delta(x)=1$. We also call the Toeplitz subshift generated by $x$ {\em regular} if $x$ is regular. It is a theorem of Jacobs and Keane (see \cite[Theorem 2.6]{Williams}) that a regular Toeplitz subshift is uniquely ergodic.

It is routine to see that the class of all regular $\{0,1\}$-Toeplitz subshifts is a Borel subset of $M(\mathcal{C})$. It is, of course, also a Borel subset of $M_1$, the class of all uniquely ergodic Cantor minimal systems.

To construct Toeplitz subshifts we will use their $\mathcal{S}$-adic representations, which are studied by Arbul\'{u}, Durand and Espinoza \cite{ADE}. In the following we recall the basic notions of this theory with an adaptation that suits our purpose here. Our presentation is therefore also partially based on \cite{GLPS}. 

Elements of the alphabet $\mathsf{A}$ are called {\em letters}. Let $\mathsf{A}^*$ denote the set of all finite $\mathsf{A}$-sequences (or {\em words}). For each word $u\in \mathsf{A}^*$, let $|u|$ denote the length of $u$. The unique word of length $0$ is the {\em empty word}, which we denote by $\varnothing$. Any word $u$ can be written as a concatenation of letters
$$ u=u(0)u(1)\dots u(|u|-1) $$
where $u(0), u(1), \dots, u({|u|-1})\in\mathsf{A}$. 
If $S\subseteq \mathsf{A}^*$ and $u\in \mathsf{A}^*$, we say that $u$ is {\em built from} $S$ if we can write $u$ as a concatenation of words
$$ u=v_1\cdots v_k $$
where $v_1,\dots, v_k\in S$. The sequence $(v_1, \dots, v_k)$ is called a {\em building} of $u$ from $S$. Note that the ``built from'' relation is transitive, in the sense that if $u$ is built from $S$ and each word in $S$ is built from $T$, then $u$ is built from $T$. If $u$ is built from $S$ and the words in $S$ all have the same length, then there is a unique building of $u$ from $S$, and the occurrences of the terms of the building in $u$ are called {\em expected occurrences}. 

A {\em generating sequence} is a doubly indexed sequence $${\mathbf u}=(u_{n, i})_{n,i\in\mathbb{N}, i\leq \ell_n}$$ for some $(\ell_n)_{n\in\mathbb{N}}\in \mathbb{N}^\mathbb{N}$ satisfying that
\begin{itemize}
\item for any $n\in\mathbb{N}$ and $i\leq \ell_n$, $u_{n,i}\in \mathsf{A}^*$;
\item for any $n\in \mathbb{N}$ and $i,j\leq \ell_n$, if $i\neq j$ then $u_{n,i}\neq u_{n,j}$;
\item for any $n\in \mathbb{N}^+$ and $i\leq \ell_n$, $u_{n,i}$ is built from the set of words
$$S_{n-1}=\{ u_{n-1, j}\colon 0\leq j\leq \ell_{n-1}\}.$$
\end{itemize}
A generating sequence ${\bf u}$ as above is said to have {\em constant length} if for all $n\in\mathbb{N}$ and $i, j\leq \ell_n$, $|u_{n,i}|=|u_{n,j}|$. A generating sequence ${\bf u}$ as above is {\em proper} if for all $n\in\mathbb{N}^+$ there exist $v_0, v_1\in S_{n-1}$ such that for any $i\leq\ell_n$, there is some building of $u_{n,i}$ from $S_{n-1}$ of which $v_0$ is the first term and $v_1$ is the last term. A generating sequence ${\bf u}$ as above is {\em primitive} if for any $n\in\mathbb{N}$ there is $N>n$ such that for all $i\leq \ell_N$, there is a building of $u_{N,i}$ from $S_n$ in which every element of $S_n$ occurs as a term. 

Given a generating sequence ${\bf u}=(u_{n,i})_{n\in\mathbb{N}, i\leq \ell_n}$, we define 
$$\begin{array}{rl} X_{\bf u}=\{x\in \mathsf{A}^\mathbb{Z}\colon &\!\!\!\!\!\!\!\! \mbox{ every finite subword of $x$ is a subword of} \\
& \mbox{ some $u_{n,i}$ for $n\in\mathbb{N}$ and $i\leq \ell_n$}\}. 
\end{array}
$$ 
Then $X_{\bf u}$ is a subshift of $\mathsf{A}^\mathbb{Z}$. We will use the following result from \cite[Proposition 2.5]{ADE} (also see \cite[Proposition 3.1]{GLPS}).

\begin{proposition}\label{prop:Toe} Let ${\bf u}$ be a generating sequence. Suppose ${\bf u}$ has constant length and is proper and primitive. Then $X_{\bf u}$ is a Toeplitz subshift.
\end{proposition}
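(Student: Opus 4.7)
The plan is to construct an explicit Toeplitz sequence $x \in X_{\bf u}$ and then invoke primitivity to show that $X_{\bf u}$ equals its orbit closure. Let $h_n$ denote the common length of the words in $S_n$. Since the generating sequence has constant length, each $u_{n, i}$ has a unique building from $S_{n-1}$, namely its consecutive length-$h_{n-1}$ subwords, which must all belong to $S_{n-1}$. Properness then provides fixed words $v_0^{(n-1)}, v_1^{(n-1)} \in S_{n-1}$ such that every $u_{n, i}$ begins with $v_0^{(n-1)}$ and ends with $v_1^{(n-1)}$. Applying this at successive levels, $v_0^{(n+1)}\in S_{n+1}$ itself begins with $v_0^{(n)}$ and $v_1^{(n+1)}\in S_{n+1}$ itself ends with $v_1^{(n)}$, so the sequences $(v_0^{(n)})_n$ and $(v_1^{(n)})_n$ are compatible as growing prefixes and suffixes respectively.

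I would then define $x \in \mathsf{A}^{\mathbb{Z}}$ by $x(k) = v_0^{(n)}(k)$ for $0 \le k < h_n$ and $x(k) = v_1^{(n)}(h_n + k)$ for $-h_n \le k < 0$, which by compatibility is unambiguous on all of $\mathbb{Z}$. To see $x \in X_{\bf u}$, any finite subword $x[a, b)$ with $[a, b) \subseteq [-h_n, h_n)$ lies inside $v_1^{(n)} v_0^{(n)}$, and this pattern appears at the interface between any two consecutive level-$(n+1)$ subblocks of any level-$(n+2)$ word (using properness). To verify that $x$ is Toeplitz, I would show that the level-$(n+1)$ block decomposition of $x$ is aligned at multiples of $h_{n+1}$: one has $x[0, h_{n+1}) = v_0^{(n+1)}$, $x[-h_{n+1}, 0) = v_1^{(n+1)}$, and for other $m$ the block $x[m h_{n+1}, (m+1) h_{n+1})$ is the appropriate level-$(n+1)$ subblock extracted from the $v_0^{(N)}$ or $v_1^{(N)}$ governing that region of $x$. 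Each such block is a level-$(n+1)$ word, hence begins with $v_0^{(n)}$ and ends with $v_1^{(n)}$. Therefore, for every $k \in [-h_n, h_n)$ and every $m \in \mathbb{Z}$, $x(k + m h_{n+1}) = x(k)$, so $[-h_n, h_n) \subseteq \mbox{\rm Per}_{h_{n+1}}(x)$. Letting $n \to \infty$, every integer is periodic in $x$, so $x$ is Toeplitz.

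Finally, minimality of $X_{\bf u}$ follows from primitivity in a standard way: for any $n$, there is $N > n$ such that every element of $S_n$ occurs as a subword of every $u_{N, i}$, in particular of $v_0^{(N)}$, which is a prefix of $x[0, h_N)$. Hence every finite subword of any $y \in X_{\bf u}$ occurs inside $x$, so $X_{\bf u} = \overline{\mbox{\rm orb}_\sigma(x)}$, and $X_{\bf u}$ is a Toeplitz subshift. The main technical obstacle is the verification that the level-$n$ block decomposition of $x$ is simultaneously aligned at multiples of $h_n$ for every $n$; this rests on uniqueness of buildings under the constant length hypothesis together with the coherence of the $(v_0^{(n)})_n$ and $(v_1^{(n)})_n$ families provided by properness.
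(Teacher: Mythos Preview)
The paper does not actually prove this proposition: it is stated with a citation to \cite[Proposition 2.5]{ADE} (and \cite[Proposition 3.1]{GLPS}) and used as a black box. So there is no proof in the paper to compare against.

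Your argument is correct and is essentially the standard one found in those references. The construction of $x$ as the two-sided limit of the coherent prefix/suffix families $(v_0^{(n)})_n$ and $(v_1^{(n)})_n$, the verification that every $h_{n+1}$-aligned block of $x$ lies in $S_{n+1}$ (hence begins with $v_0^{(n)}$ and ends with $v_1^{(n)}$, giving $[-h_n,h_n)\subseteq\mbox{Per}_{h_{n+1}}(x)$), and the appeal to primitivity for $X_{\bf u}=\overline{\orb_\sigma(x)}$ are all sound. The alignment step you flag as the main technical obstacle is indeed routine once one uses that, under the constant-length hypothesis, each $v_0^{(N)}$ (resp.\ $v_1^{(N)}$) has a \emph{unique} building from $S_{n+1}$ whose blocks sit exactly at the intervals $[jh_{n+1},(j+1)h_{n+1})$; the paper states this uniqueness explicitly just before the proposition. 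One cosmetic point: to place $v_1^{(n)}v_0^{(n)}$ inside some $u_{m,i}$ you should take $m$ large enough that $h_m/h_{n+1}\ge 2$, which primitivity guarantees in the nondegenerate case; it need not happen already at $m=n+2$.
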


We will also use the concept of recognizability for $X_{\bf u}$, where ${\bf u}=(u_{n,i})_{n\in\mathbb{N}, i\leq \ell_n}$ is a generating sequence. We follow \cite{GL} in a review of this notion. 
For every $n\in\mathbb{N}$ and $x\in X_{\bf u}$, there is a way to express $x$ as an bi-infinite concatenation of words in $S_n$, which is called a {\em building} of $x$ from $S_n$. In such a building, an occurrence of any $v\in S_n$ in $x$ as a constituent of the building is called an {\em expected occurrence} of the building. We say that ${\bf u}$ is {\em recognizable} if for every $n\in \mathbb{N}$ and $x\in X_{\bf u}$, there is a unique building of $x$ from $S_n$. 
The notion of recognizability has a useful equivalent formulation. For each $n\in\mathbb{N}$ and $0\leq i\leq \ell_n$, define the {\em cylinder set} 
$ [v_{n,i}]$ to be the set of all $x\in X_{\bf u}$ such that  $v_{n,i}$ occurs in  $x$  at position $0$ as an expected occurrence of  some building of $x$  from $S_n$.  Denote the shift map on $X_{\bf u}$ by $\sigma$. Then ${\bf u}$ is recognizable if and only if for every $n\in\mathbb{N}$, the set
$$ \{ \sigma^k[v_{n,i}]\colon 0\leq i\leq \ell_n, 0\leq k<|v_{n,i}|\} $$
is a clopen partition of $X_{\bf u}$. This equivalent formulation is easy to work with when we consider invariant measures on $X_{\bf u}$. Specifically, if ${\bf u}$ has constant length and is recognizable, and $\mu$ is a $\varphi$-invariant probability measure on $X_{\bf u}$, then for every $n\in\mathbb{N}$, we have 
$$ \mu\left(\bigcup_{0\leq i\leq \ell_n}[v_{n,i}]\right)=\displaystyle\sum_{0\leq i\leq \ell_n}\mu([v_{n,i}])=\displaystyle\frac{1}{|v_{n,0}|} $$
by the equivalent formulation.

We are now ready for the following main theorem of this subsection.

\begin{theorem} \label{thm:Toe} The equivalence relation $=^+$ is Borel reducible to the orbit equivalence for regular $\{0,1\}$-Toeplitz subshifts.
\end{theorem}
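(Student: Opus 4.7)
The plan is to build, in a Borel way, a map $(x_n) \mapsto X_{(x_n)}$ sending $(2^{\mathbb{N}})^{\mathbb{N}}$ to regular $\{0,1\}$-Toeplitz subshifts so that the group $\Gamma_{\varphi_{(x_n)}} \subseteq \mathbb{R}$ encodes the unordered set $\{x_n : n \in \mathbb{N}\}$, and then to apply Lemma~\ref{lem:OEkey} in the case $n=1$.

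First, fix a Borel injection $\iota \colon 2^{\mathbb{N}} \to (0,1)$ whose image together with $1$ is $\mathbb{Q}$-linearly independent in $\mathbb{R}$; such a map exists by standard descriptive set-theoretic arguments (perfect $\mathbb{Q}$-linearly independent sets of reals exist, and one may take a Borel bijection from $2^{\mathbb{N}}$ onto such a set). Let $G(x_n)$ denote the subgroup of $\mathbb{R}$ generated by $\{1\} \cup \{\iota(x_n) : n \in \mathbb{N}\}$. By $\mathbb{Q}$-linear independence and the injectivity of $\iota$, we have $G(x_n) = G(y_n)$ if and only if $\{\iota(x_n) : n \in \mathbb{N}\} = \{\iota(y_n) : n \in \mathbb{N}\}$ if and only if $(x_n) =^+ (y_n)$.

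Second, for each $(x_n)$, build a generating sequence $\mathbf{u} = \mathbf{u}^{(x_n)}$ over $\{0,1\}$ that is of constant length, proper, primitive, and recognizable, and such that the resulting Toeplitz subshift (cf.\ Proposition~\ref{prop:Toe}) is regular, hence uniquely ergodic. Using the identity displayed after the definition of recognizability, the unique invariant measure assigns $\mu([u_{n,i}]) = f_{n,i}/L_n$, where $L_n = |u_{n,0}|$ and $f_{n,i}$ is the limiting frequency of $u_{n,i}$ in the canonical $S_n$-building. The $u_{n,i}$ are constructed in stages so that at stage $n$ the value $\iota(x_n)$ is introduced into $\Gamma_\varphi$ as a measure of some cylinder, by choosing the combinatorics of the concatenations building $u_{n+1,j}$ from $S_n$ to force the corresponding frequencies to converge to appropriate rational translates of $\iota(x_n)$. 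Regularity is enforced in parallel by making the hole densities $1 - \delta_{p_n}$ shrink fast enough across stages. With careful bookkeeping one arranges $\Gamma_{X_{\mathbf{u}}} = G(x_n)$, with no extraneous $\mathbb{Q}$-linear relations.

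Finally, the map $(x_n) \mapsto X_{\mathbf{u}^{(x_n)}} \in M(\mathcal{C})$ is Borel because the construction proceeds stage by stage using only the data $x_0, \dots, x_n$ at stage $n$. By Lemma~\ref{lem:OEkey} specialized to uniquely ergodic systems, the orbit equivalence class of $X_{\mathbf{u}^{(x_n)}}$ is determined by $\Gamma_{X_{\mathbf{u}^{(x_n)}}} = G(x_n)$, so the map is a Borel reduction from $=^+$ to the orbit equivalence of regular $\{0,1\}$-Toeplitz subshifts. The main obstacle is the combinatorial design in the second step: producing a constant-length, proper, primitive, recognizable, and regular binary generating sequence whose induced dimension group is exactly $G(x_n)$ and not a larger group, within the restrictive $\{0,1\}$ alphabet. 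This requires a careful multi-scale construction, likely passing through an auxiliary larger alphabet encoded into fixed-length binary blocks, together with an explicit scheme for approximating each $\iota(x_n)$ by cylinder frequencies whose contributions to $\Gamma_\varphi$ can be tracked and controlled.
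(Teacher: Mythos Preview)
Your plan is the paper's own approach: pick a $\mathbb{Q}$-linearly independent family of reals, build for each sequence a regular $\{0,1\}$-Toeplitz subshift via a constant-length, proper, primitive, recognizable generating sequence so that $\Gamma_\varphi$ records the unordered set, and invoke Lemma~\ref{lem:OEkey} with $n=1$. But your proposal explicitly stops at the hard step---``with careful bookkeeping one arranges $\Gamma_{X_{\mathbf u}}=G(x_n)$''---and that bookkeeping is essentially the entire proof. As written this is a correct outline, not a proof.

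For comparison, here is what the paper does to fill that gap. It works over $\{0,1\}$ directly but lets the number of words grow: at level $n$ there are $n{+}1$ words $v_{n,0},\dots,v_{n,n}$. A new irrational $c_{n,n}\in b_{n-1}+\mathbb{Q}$ is introduced at each stage as the target value of $\mu([v_{n,n}])$, and the remaining $c_{n,j}$ are determined as the unique solution of an explicit $(n{+}1)\times(n{+}1)$ linear system built from the transition counts $T^{n-1,j}_{n,i}$; invertibility is arranged by perturbing a rank-one matrix. This system is what guarantees $\mu([v_{n,i}])=c_{n,i}$ exactly and hence that $\Gamma_\sigma$ contains no extraneous elements. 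Two further points your sketch would have to absorb: (i) the construction inevitably forces $\mathbb{Q}\subseteq\Gamma_\sigma$ (lengths $|v_{n,0}|$ are multiples of $n$), so the achievable target is the $\mathbb{Q}$-span, not the $\mathbb{Z}$-span you wrote; the paper handles this by the reparametrization $b_i=a_{\pi_0(i)}/\pi_1(i)$ so that $\mathbb{Z}$-combinations of the $b_i$ exhaust $\mathbb{Q}$-combinations of the $a_i$; (ii) recognizability and regularity are obtained by concrete devices---each $v_{n,i}$ begins and ends with the marker $v_{n-1,0}v_{n-1,1}v_{n-1,0}$ while all other $v_{n-1,1}$'s occur in pairs, and a common-position set of density $\ge 1-1/n$ is maintained across the $v_{n,i}$. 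Your suggestion of ``an auxiliary larger alphabet encoded into fixed-length binary blocks'' is in spirit close to the paper's growing word-set, but the actual mechanism is the linear-algebra control of frequencies described above.
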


The rest of this subsection is devoted to a proof of Theorem~\ref{thm:Toe}.

By \cite[Lemma 4.5]{Kaya}, there is an uncountable Borel subset $A$ of $\mathbb{R}$ such that every finite subset of $A\cup\{1\}$ is $\mathbb{Q}$-linearly independent. We fix such a set $A$. Let $=^+_A$ denote the restriction of $=^+$ on $A^\mathbb{N}$. Then $=^+_A$ is Borel bireducible with $=^+$. It suffices to show that $=^+_A$ is Borel reducible to the orbit equivalence for regular $\{0,1\}$-Toeplitz subshifts.

Fix a bijection $\langle \cdot, \cdot\rangle\colon \mathbb{N}\times\mathbb{N}^+\to\mathbb{N}$. Let $\pi_0\colon \mathbb{N}\to \mathbb{N}$ and $\pi_1\colon \mathbb{N}\to \mathbb{N}^+$ be the unique functions such that for any $n\in\mathbb{N}$, 
$$ \langle \pi_0(n), \pi_1(n)\rangle=n. $$
Given $(a_i)_{i\in\mathbb{N}}\in A^\mathbb{N}$, define $(b_i)_{i\in\mathbb{N}}\in \mathbb{R}^\mathbb{N}$ by letting
$$ b_i=\displaystyle\frac{a_{\pi_0(i)}}{\pi_1(i)}.$$  
The point of this adjustment is so that all $\mathbb{Z}$-linear combinations of $(b_i)_{i\in\mathbb{N}}$ are exactly all $\mathbb{Q}$-linear combinations of $(a_i)_{i\in\mathbb{N}}$.

For notational simplicity, we will inductively define a generating sequence 
$$ {\bf v}=(v_{n,j})_{n\geq 1, 0\leq j\leq n} $$
which has constant length and is proper and primitive, 
along with a doubly indexed sequence of real numbers
$$ {\bf c}=(c_{n,j})_{n\geq 1, 0\leq j\leq n} $$
with $(b_i)_{i\in\mathbb{N}}$ as the parameter. Granting the construction, $X_{\bf v}$ is a Toeplitz subshift by Proposition~\ref{prop:Toe}. The composed map
$$ (a_i)_{i\in\mathbb{N}}\mapsto (b_i)_{i\in\mathbb{N}}\mapsto {\bf v}\mapsto X_{\bf v} $$
will witness that $=^+_A$ is Borel reducible to the orbit equivalence for regular Toeplitz subshifts. It will be routine to check that this map is Borel. 

For $n=1$, take $c_{1,1}$ to be an arbitrary element of $(b_0+\mathbb{Q})\cap(1/4,3/4)$, $c_{1,0}=1-c_{1,1}$, $v_{1,0}$ to be the word $0$, and $v_{1,1}$ to be the word $1$. 

For $ n\ge2$, suppose the following inductive hypotheses hold: for all $1\le m< n$, we have defined words $v_{m,j}$ and real numbers $c_{m,j}$ for $0\le j\le m$ with the following properties:
\begin{enumerate}
\item[(1)] The sequence $(v_{m,j})_{1\leq m<n, 0\leq j\leq m}$ satisfies the conditions of constant length and being proper,  primitive and recognizable; 
\item[(2)] For every $1\le m< n$ and $0\leq j\leq m$, $|v_{m,j}|$ is a multiple of $m$; 
\item[(3)] For every $1\le m< n$, the cardinality of the set 
$$\left\{ 0\le i <|v_{m,0}| \colon v_{m,j}(i) \mbox{ are the same letter for all } 0\le j\le m\right\}$$
is at least $(1-1/m)|v_{m,0}|$;

\item[(4)] For every $1\le m< n$ and $0\le j\le m$, $0< c_{m,j}<1$ and $c_{m,m}\in  b_{m-1}+\mathbb{Q}$.
\item[(5)] For $1\le m< m'<n$ and $0\le j\leq m$, $0\leq  j'\le m'$,  let $T^{m,j}_{m',j'}$ be the number of expected occurrences of $v_{m,j}$ in $v_{m',j'}$. Then for $1\leq m<n-1$, $(c_{m+1,j})_{0\le j\le m+1}$ is the unique solution of the system of equations 
\begin{equation*}
\begin{cases}
T^{m,0}_{m+1,0}x_0+T^{m,0}_{m+1,1}x_1+\dots+T^{m,0}_{m+1,m+1}x_{m+1}&=c_{m,0}\\
 \hspace{2em}\vdots\hspace{5em}\vdots \hspace{8em}\vdots&\hspace{2em}\vdots\\
T^{m,m}_{m+1,0}x_0+T^{m,m}_{m+1,1}x_1+\dots+T^{m,m}_{m+1,m+1}x_{m+1}&=c_{m,m}\\
 \hspace{17.5em}  x_{m+1}&=c_{m+1,m+1}
\end{cases}
\end{equation*}
\item[(6)] For $1\le m< m'<n$ and $0\le j\le m$, $0\le  j'\le m'$, we have
$$\left|\frac{T^{m,j}_{m',j'}}{|v_{m',0}|}-c_{m,j}\right|<\frac{1}{m(m+1)|v_{m,0}|}. $$
\end{enumerate}
We need to define words $v_{n,j}$ and real numbers $c_{n,j}$ for $0\le j\le n$. 

Before doing this, we note that inductive hypothesis (5) implies that for any $1\leq m<n-1$ and $0\leq j\leq m$, we have
\begin{equation}\label{eq:5}
\displaystyle\sum_{0\leq i\leq n-1}T^{m,j}_{n-1,i}c_{n-1,i}=c_{m,j}.\tag{*}
\end{equation}
Since $c_{1,0}+c_{1,1}=1=|v_{1,0}|$, it follows from inductive hypothesis (1) and equality (\ref{eq:5}) that
\begin{equation}\label{eq:c}
\displaystyle\sum_{0\leq i\leq n-1}c_{n-1,i}=\displaystyle\frac{1}{|v_{n-1,0}|}. \tag{**}
\end{equation}
We claim that there exists $0<\epsilon_1<1$ such that if 
\begin{equation}\label{eq:a}\left|\frac{T^{n-1,j}_{n,j'}}{|v_{n,0}|}-c_{n-1,j} \right|<\epsilon_1\tag{***}
\end{equation}
for every $0\le j\le n-1$ and $0\le j'\le n$, then inductive hypothesis (6) will continue to hold when $m'=n$.  In fact, suppose $1\leq m<n-1$ and $0\leq j\leq m$. For $0\leq i\leq n-1$ and $0\leq j'\leq n$, let 
$$ \frac{T^{n-1,i}_{n,j'}}{|v_{n,0}|}=c_{n-1,i}+\delta_{i,j'}. $$
Then inequality (\ref{eq:a}) gives $|\delta_{i,j'}|<\epsilon_1$. Denote
$$ M_{m,j}=\displaystyle\sum_{0\leq i\leq n-1}T^{m,j}_{n-1,i}. $$
By inductive hypothesis (5) and in particular equality (\ref{eq:5}), we have
\begin{align*}
\left|\frac{T^{m,j}_{n,j'}}{|v_{n,0}|}-c_{m,j}\right| &=\left|\sum_{0\le i\le n-1}T^{m,j}_{n-1,i}\frac{T^{n-1,i}_{n,j'}}{|v_{n,0}|}-c_{m,j}\right|\\
&=\left|\sum_{0\le i\le n-1}T^{m,j}_{n-1,i}(c_{n-1,i}+\delta_{i,j'})-c_{m,j}\right|\\
&=\left|\sum_{0\le i\le n-1}T^{m,j}_{n-1,i}c_{n-1,i}-c_{m,j}+\sum_{0\leq i\leq n-1}T^{m,j}_{n-1,i}\delta_{i,j'}\right|\\
&= \left| \sum_{0\leq i\leq n-1}T^{m,j}_{n-1,i}\delta_{i,j'}\right| \le \epsilon_1M_{m,j}.
\end{align*}
Thus, if we take 
$$\epsilon_1<\min\left\{\displaystyle\frac{1}{(n-1)n|v_{n-1,0}|}, \displaystyle\frac{1}{m(m+1)|v_{m,0}|M_{m,j}}\colon 1\le m<n-1, 0\le j \le m\right\},$$
then the claim holds. Fix such an $\epsilon_1$.

Take any $\epsilon_2>0$ such that 
$$\epsilon_2<\min\left\{\frac{\epsilon_1}{4}, \frac{c_{n-1,j}}{4}: 0\le j\le n-1\right\}. $$
Let $\epsilon_3>0$. 
Consider the following $(n+1)\times (n+1)$ matrix $U$
$$
\begin{bmatrix}
c_{n-1,0}+\epsilon_2 &c_{n-1,0}-\displaystyle\frac{\epsilon_2 }{n-1} &\cdots&c_{n-1,0}-\displaystyle\frac{\epsilon_2 }{n-1}&c_{n-1,0}-\displaystyle\frac{\epsilon_2 }{n-1}\\ \\
c_{n-1,1}-\displaystyle\frac{\epsilon_2 }{n-1}&c_{n-1,1}+\epsilon_2 &\cdots&c_{n-1,1}-\displaystyle\frac{\epsilon_2 }{n-1}&c_{n-1,1}-\displaystyle\frac{\epsilon_2 }{n-1}\\ \\
\vdots&\vdots&\ddots&\vdots&\vdots\\ \\
c_{n-1,n-1}-\displaystyle\frac{\epsilon_2 }{n-1}&c_{n-1,n-1}-\displaystyle\frac{\epsilon_2 }{n-1}&\cdots&c_{n-1,n-1}+\epsilon_2&c_{n-1,n-1}-\displaystyle\frac{\epsilon_2 }{n-1}\\ \\
0&0&\dots&0&1\\
\end{bmatrix}
$$
and the vector 
$$\vec{c}=[c_{n-1,0},\ c_{n-1,1},\ \dots,\ c_{n-1,n-1},\ \epsilon_3]^T.$$
It is routine to check that $U$ is invertible and therefore the equation $U\vec{x}=\vec{c}$ has a unique solution.
When $\epsilon_3$ converges to $0$, the solution converges to 
$$\left[\frac{1}{n},\cdots,\frac{1}{n},\frac{1}{n},0\right]^T. $$
Therefore we may find an $\epsilon_3 \in b_{n-1} +\mathbb Q$ such that  the solution is in $(0,1)^{n+1}$.

Fixing an $\epsilon_3$ as above, we note that there is $0<\epsilon_4<\epsilon_2$ such that,  if $U'$ is any $(n+1)\times (n+1)$ matrix such that $|U'_{j,i}-U_{j,i}|<\epsilon_4$ for any $0\le j<n$, $0\le i \le n$ and $U'_{n,i}=U_{n,i}$ for any $0\le i\le  n$, then the equation $U'\vec{x}=\vec{c}$ also has a unique solution in $(0,1)^{n+1}$ with the last coordinate exactly $\epsilon_3$. Fix such an $\epsilon_4$.

We conclude that when the  following conditions are satisfied, the system of equations in the inductive hypothesis (5), where $m=n-1$, has a unique solution in $(0,1)^{n+1}$:
\begin{itemize}

\item   for any $0\le j\le n-1$, $0\le i\le n$ and $j\neq i$, 
$$\left|\frac{T^{n-1,j}_{n,i}}{|v_{n,0}|}-c_{n-1,j}+\frac{\epsilon_2}{n-1} \right|<\epsilon_4;$$
\item  for any $0\le j\le n-1$,
$$\left|\frac{T^{n-1,j}_{n,j}}{|v_{n,0}|}-c_{n-1,j}-\epsilon_2 \right|<\epsilon_4;$$
\item $c_{n,n}=\epsilon_3$.
\end{itemize}

For any $0\le j\le n-1$ and $0\le i\le n$, let $\frac{T^{n-1,j}_{n,i}}{|v_{n,0}|}$ be a rational number satisfying the above conditions plus the following conditions:
\begin{itemize}
\item there are positive integers $p_{i,j}$, $q_{i,j}$ such that 
$$\frac{T^{n-1,j}_{n,i}}{|v_{n,0}|}=\frac{p_{i,j}}{n^{q_{i,j}}};$$
\item for any $0\le i\le n$,
$$\sum_{0\le j \le n-1}\frac{T^{n-1,j}_{n,i}}{|v_{n,0}|}=\frac{1}{|v_{n-1,0}|};$$
\item and
$$ \displaystyle\sum_{0\leq j\leq n-1}\min_{0\leq i\leq n}\displaystyle\frac{T^{n-1,j}_{n,i}}{|v_{n,0}|}\geq \displaystyle\frac{1}{n|v_{n-1,0}|}. $$
\end{itemize}
The second condition can be met because of equality (\ref{eq:c}). The last condition can be met because
$$ \displaystyle\frac{T^{n-1,j}_{n,i}}{|v_{n,0}|}\geq c_{n-1,j}-\frac{\epsilon_2}{n-1}-\epsilon_4, $$
$\epsilon_4<\epsilon_2$, and equality (\ref{eq:c}). 
These numbers give rise to an $(n+1)\times (n+1)$ matrix $U'$ such that $|U'_{j,i}-U_{j,i}|<\epsilon_4$ for any $0\le j<n$, $0\le i \le n$ and $U'_{n,i}=U_{n,i}$ for any $0\le i\le  n$. Let $(c_{n,i})_{0\le i\le n}$ be the unique solution of the equation $U'\vec{x}=\vec{c}$. 

Finally, define $(v_{n,i})_{0\leq i\leq n}$ to satisfy the following conditions:
\begin{itemize}
\item for all $0\leq i\leq n$, $|v_{n,i}|=|v_{n,0}|$ and it is a sufficiently large multiple of $n^{q_{i,j}}|v_{n-1,0}|$ so that for any $0\leq j\leq n-1$, $T^{n-1,j}_{n,i}=U'_{j,i}|v_{n,0}|$ is a sufficiently large even number;
\item for each $0\leq i\leq n$ and $0\leq j\leq n-1$, $v_{n-1,j}$ occurs exactly $T^{n-1,j}_{n,i}=U'_{j,i}|v_{n,0}|$ many times in the building of $v_{n,i}$ from $S_{n-1}$;
\item for each $0\leq i\leq n$, the first three terms and the last three terms of the building of $v_{n,i}$ are both $v_{n-1,0}v_{n-1,1}v_{n-1,0}$, and the other occurrences of $v_{n-1,1}$ in $v_{n,i}$ all occur in pairs;
\item letting $L=|v_{n,0}|/|v_{n-1,0}|$, there is a subset $K\subseteq \{1, \dots, L\}$ such that $|K|\geq L/n$ and for any $k\in K$, for all $0\leq i\leq n$, the $k$-th terms of the buildings of $v_{n,i}$ from $S_{n-1}$ are the same.
\end{itemize}
The third condition can be met because $T^{n-1,1}_{n,i}$ are all even by the first condition. The last condition can be met because, letting 
$$ |K|=\left(\displaystyle\sum_{0\leq j\leq n-1}\min_{0\leq i\leq n}\displaystyle\frac{T^{n-1,j}_{n,i}}{|v_{n,0}|}\right)|v_{n,0}|, $$
then $|K|\geq L/n$.

This finishes the definition of $v_{n,i}$ and $c_{n,i}$ for $0\leq i\leq n$. 

It is easily seen that the resulting sequence $(v_{n,i})_{1\leq  i\leq n}$ has constant length and is proper by the third condition above. It is primitive since $T^{n-1,j}_{n,i}>0$ for each $0\leq i\leq n$ and $0\leq j\leq n-1$. The third condition above, together with inductive hypothesis (1), imply that the sequence is recognizable. Thus inductive hypothesis (1) is maintained. It is also clear that inductive hypotheses (2), (4) and (5) are also explicitly maintained, and inductive hypothesis (6) follows from the claim above. For  (3), note that by inductive hypothesis (3), the cardinality of the set
$$ \{0\leq i<|v_{n,0}|\colon \mbox{$v_{n,i}$ are the same letter for all $0\leq i\leq n$}\} $$
is at least 
$$\begin{array}{rl}
 & |K||v_{n-1,0}|+(L-|K|)\left(1-\displaystyle\frac{1}{n-1}\right)|v_{n-1,0}| \\ \\
 \geq & 
 \displaystyle\frac{1}{n}\displaystyle\frac{|v_{n,0}|}{|v_{n-1,0}|}|v_{n-1,0}| + \displaystyle\frac{|v_{n,0}|}{|v_{n-1,0}|}\left(1-\displaystyle\frac{1}{n}\right)\left(1-\displaystyle\frac{1}{n-1}\right)|v_{n-1,0}| \\ \\
= & \left(1-\displaystyle\frac{1}{n}\right)|v_{n,0}|.
  \end{array}
  $$


Now the inductive construction gives a generating sequence
$$ {\bf v}=(v_{n,j})_{n\geq 1, 0\le j\le n}. $$ 
By Proposition~\ref{prop:Toe} and inductive hypotheses (1) and (3), $X_{\bf v}$ is a regular Toeplitz subshift. Let $\mu$ be the unique ergodic measure on $X_{\bf v}$. Then by inductive hypotheses (5) and (6), we have $\mu([v_{n,i}])=c_{n,i}$ 
 for every $n\geq 1$ and $0\le i\le n$. In fact, by inductive hypothesis (5), for any $m>n$, we have 
$$T_{m,0}^{n,i}c_{m,0}+T_{m,1}^{n,i}c_{m,1}+\dots+ T_{m,m}^{n,i}c_{m,m}=c_{n,i}.$$
By inductive hypothesis (6), for any $m'>m$, $0 \le j \le m$, we have 
$$ \left|\frac{T_{m',0}^{m,j}}{|v_{m',0}|}-c_{m,j}\right|< \frac{1}{m(m+1)|v_{m,0}|}.$$ Thus 
\begin{align*}
 \left| \frac{T_{m',0}^{n,i}}{|v_{m',0}|}-c_{n,i} \right| = & \left|\frac{1}{|v_{m',0}|}\sum_{0\le j \le m}T_{m,j}^{n,i}T_{m',0}^{m,j}-c_{n,i}\right| \\
 \leq & \sum_{0\le j \le m} T_{m,j}^{n,i}\left|\frac{T_{m',0}^{m,j}}{|v_{m',0}|}-c_{m,j}\right| \\
<& \frac{1}{m(m+1)|v_{m,0}|}\sum_{0\le j \le m} T_{m,j}^{n,i} < \frac{1}{m}.
\end{align*}
As $m$ and $m'$ go to infinity,  $\frac{T_{m',0}^{n,i}}{|v_{m',0}|}$ goes to $\mu([v_{n,i}])$. Thus $\mu([v_{n,i}])=c_{n,i}$.

By Lemma~\ref{lem:OEkey}, the orbit equivalence for regular Toeplitz subshifts is determined by $\Gamma_{\sigma}$ as a countable set of real numbers. Now it follows from indutive hypothesis (2) that $\mathbb{Q}\subseteq \Gamma_\sigma$. By inductive hypothesis (4), $b_n\in \Gamma_{\sigma}$ for every $n\in\mathbb{N}$. It follows that all $\mathbb{Z}$-linear combinations of $(b_i)_{i\in\mathbb{N}}$ are elements of $\Gamma_{\sigma}$. This further implies that all $\mathbb{Q}$-linear combinations of $(a_i)_{i\in\mathbb{N}}$ are elements of $\Gamma_{\sigma}$. By indutive hypothesis (5), for every $n\geq 1$ and $0\le i\le n$, $c_{n,i}$ is a $\mathbb{Q}$-linear combination of $\{b_0,\cdots, b_n\}$. So $\Gamma_{\sigma}$ is exactly the $\mathbb{Q}$-module generated by $\{1, a_n\colon n\in\mathbb{N}\}$. So the orbit equivalence is determined by this $\mathbb{Q}$-module as a countable set of real numbers. By our assumption for the set $A$, the $\mathbb{Q}$-modules generated by $\{1, a_n\colon n\in\mathbb{N}\}$ and $\{1, a'_n\colon n\in\mathbb{N}\}$ are the same if and only if $(a_n)_{n\in\mathbb{N}}=^+(a'_n)_{n\in\mathbb{N}}$, so our construction is a Borel reduction from $=^+_A$ to the orbit equivalence for regular Toeplitz subshifts.

This completes the proof of Theorem~\ref{thm:Toe}.

\section{Minimal Subshifts of Finite Topological Rank\label{sec:4}}

In this section we consider the orbit equivalence for minimal subshifts of finite topological rank. By the theorem of Downarowicz and Maass \cite{DM}, these are exactly Cantor minimal systems of finite topological rank $n\geq 2$. We first note that this is a subclass of Cantor minimal systems with finitely many ergodic measures.

Recall from \cite{GL} that a Cantor minimal system $(X, \varphi)$ has {\em finite topological rank} if it is topologically conjugate to a Bratteli--Vershik system in which the numbers of vertices at all levels are bounded. Such ordered Bratteli diagrams are also said to be of {\em finite rank}. The least bound in the above definition is the {\em topological rank} of $(X, \varphi)$. To study invariant measures of Cantor minimal systems, it is convenient to use their representation by a nested system of Kakutani--Rohlin partitions given by Herman, Putnam and Skau \cite[Theorem 4.2]{HPS} (also see \cite[Theorem 2.2]{GL}). There is a natural and straightforward correspondence between an ordered Bratteli diagram and the nested system of Kakutani--Rohlin partitions (see \cite[Section 2.5]{GL}). 

More specifically, recall that
 for a minimal Cantor system $(X, \varphi)$, a {\em Kakutani–Rohlin partition} is a
 partition
 $$ \mathcal{P} =\left\{ \varphi^jB(k)\colon 1\leq k\leq d, 0\leq j<h(k)\right\} $$
of $X$ with clopen sets, where $d, h(1), \dots, h(d)$ are positive integers and $B(1), \dots, B(d)$ are
 clopen subsets of $X$ such that
$$\bigcup^d_{k=1} \varphi^{h(k)}B(k) = \bigcup^d_{k=1} B(k). $$
 The set $B(\mathcal{P}) = \bigcup^d_{k=1} B(k)$ is called the {\em base} of $\mathcal{P}$. For $1\leq k\leq d$, the subpartition $T_k =\{\varphi^jB(k)\colon 0\leq  j<h(k)\}$ is the $k$-th {\em tower} of $\mathcal{P}$, which has {\em base} $B(k)$ and {\em height} $h(k)$. If $\mu$ is an invariant measure for $(X, \varphi)$, then for every $1\leq k\leq d$, every set in the $k$-tower $T_k$ has the same $\mu$-measure as the base of $T_k$. Moreover, if $\mu$ is a probability measure, then
$$ \displaystyle\sum_{k=1}^d h(k)\mu(B(k))=1. $$

\begin{lemma}\label{lem:MSN} For any $n\geq 1$, a Cantor minimal system of topological rank $n$ has at most $n$ ergodic measures. In particular, for any $n\geq 2$, a minimal subshift of topological rank $n$ has at most $n$ ergodic measures.
\end{lemma}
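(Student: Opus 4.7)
The plan is to exploit the representation of $(X,\varphi)$ by a nested sequence of Kakutani--Rohlin partitions $\mathcal{P}_m$ with at most $n$ towers at each level, available from the finite--rank Bratteli--Vershik representation (as in \cite[Theorem 2.2]{HPS} and \cite[Section 2.5]{GL}). Write $B_m(k)$ and $h_m(k)$ for the bases and heights of the towers of $\mathcal{P}_m$, $1\leq k\leq d_m\leq n$, and for a $\varphi$-invariant probability measure $\mu$ set $v_m(\mu)=(\mu(B_m(k)))_{k=1}^{d_m}\in\mathbb{R}^{d_m}$. Two structural facts will drive the argument: \emph{(i)} by invariance $\mu(\varphi^jB_m(k))=\mu(B_m(k))$, so each $v_m(\mu)$ lies in the affine simplex $\{x\in\mathbb{R}_{\geq 0}^{d_m}\colon \sum_k h_m(k)x_k=1\}$, which has affine dimension $d_m-1\leq n-1$, and the atoms of the $\mathcal{P}_m$ jointly generate the Borel $\sigma$-algebra, so $\mu$ is uniquely determined by the sequence $(v_m(\mu))_m$; \emph{(ii)} since $\mathcal{P}_{m'}$ refines $\mathcal{P}_m$ for $m'\geq m$, there is a nonnegative integer incidence matrix $T^m_{m'}$ with $v_m(\mu)=T^m_{m'}v_{m'}(\mu)$ for every such $\mu$.

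Suppose, for contradiction, that $(X,\varphi)$ has $n+1$ distinct ergodic measures $\mu_0,\dots,\mu_n$. For each sufficiently large $m$, the $n+1$ points $v_m(\mu_0),\dots,v_m(\mu_n)$ live in an affine space of dimension at most $n-1$, hence are affinely dependent: there exist coefficients $\alpha_0^{(m)},\dots,\alpha_n^{(m)}$, not all zero, with $\sum_i\alpha_i^{(m)}=0$, $\sum_i|\alpha_i^{(m)}|=1$, and $\sum_i\alpha_i^{(m)}v_m(\mu_i)=0$. Applying the matrix $T^\ell_m$ to this identity propagates it down: $\sum_i\alpha_i^{(m)}v_\ell(\mu_i)=T^\ell_m\cdot 0=0$ for every $\ell\leq m$. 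By compactness of the sphere $\{\alpha\in\mathbb{R}^{n+1}\colon \sum_i\alpha_i=0,\ \sum_i|\alpha_i|=1\}$, extract a subsequence along which $\alpha^{(m)}\to\alpha\neq 0$ with $\sum_i\alpha_i=0$, so that $\sum_i\alpha_iv_\ell(\mu_i)=0$ for every fixed $\ell$. By the determination property in (i), the signed measure $\sum_i\alpha_i\mu_i$ must vanish identically; but distinct ergodic measures are mutually singular, hence linearly independent as signed measures, forcing $\alpha=0$, a contradiction. The ``in particular'' clause for minimal subshifts of topological rank $n\geq 2$ is then immediate, since such subshifts form a subclass of Cantor minimal systems of rank $n$.

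The main point needing care is the precise setup of the Kakutani--Rohlin refinement data, in particular obtaining the incidence matrices $T^m_{m'}$ with the correct bookkeeping between levels of the Bratteli diagram; this is classical and supplied by the references already cited in the excerpt. Everything else is a compactness plus linear-algebra argument which I expect to be routine. I briefly considered trying to argue more directly that the simplex $M_\varphi$ has affine dimension $\leq n-1$ by embedding $M_\varphi$ into the inverse limit of the $\Phi_m(M_\varphi)\subseteq \Delta_m$, but controlling affine dimension across an inverse limit is awkward; the coefficient-extraction-by-compactness approach above seems cleaner and avoids that subtlety.
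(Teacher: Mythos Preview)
Your proposal is correct and follows essentially the same approach as the paper: both use the nested Kakutani--Rohlin partitions with at most $n$ towers, map invariant measures into simplices of affine dimension $\leq n-1$, and derive a contradiction from having $n+1$ ergodic measures. The only difference is in how the key step is justified: the paper asserts directly that for some large level $i$ the images $\Phi_i(\mu_0),\dots,\Phi_i(\mu_n)$ are linearly independent (implicitly using that the decreasing kernels $\ker\Phi_i$ on the finite-dimensional span of the $\mu_j$ must stabilize at $\{0\}$), whereas you argue the contrapositive via a compactness extraction of a limiting affine dependence---both arguments are valid and amount to the same thing.
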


\begin{proof}
Let $(X,\varphi)$ be a Cantor minimal system of topological rank $n$. Then there is a nested system of Kakutani--Rohlin partitions $(\mathcal{P}_i)$ satisfying   \cite[Theorem 4.2]{HPS} (also see \cite[Theorem 2.2]{GL}) and so that for every $i\in\mathbb{N}$, $\mathcal{P}_i$ has at most $n$ towers. 

Recall that $\mbox{\rm M}_\varphi$ is the set of all $\varphi$-invariant Borel probability measures of $X$. We define a map $\Phi_i\colon \mbox{\rm M}_\varphi\to [0,1]^{\mathcal{P}_i}$ as follows. For each $\mu\in\mbox{\rm M}_\varphi$ and $i\in\mathbb{N}$, let $\Phi_i(\mu)$ be the restriction of $\mu$ on $\mathcal{P}_i$. Let $S_i$ be  the image of $\Phi_i$. Then $S_i$ is a simplex. Because $\mathcal{P}_i$ has at most $n$ towers, $S_i$ is at most $(n-1)$-dimensional and therefore has at most $n$ extreme points.  

Now consider the inverse system
$$ Y_0\stackrel{\theta_0}{\longleftarrow} Y_1\stackrel{\theta_1}{\longleftarrow} \cdots \stackrel{\theta_{i-1}}{\longleftarrow}Y_i\stackrel{\theta_i}{\longleftarrow} Y_{i+1}\stackrel{\theta_{i+1}}{\longleftarrow}\cdots\cdots $$
where for each $i\in\mathbb{N}$, $Y_i=[0,1]^{\mathcal{P}_i}$ and for each $m\in [0,1]^{\mathcal{P}_{i+1}}$ and $A\in \mathcal{P}_i$,
$$ \theta_i(m)(A)=\displaystyle\sum\left\{ m(B)\colon B\subseteq A, B\in \mathcal{P}_{i+1}\right\}. $$
Then for $\mu\in \mbox{\rm M}_\varphi$, $\mu$ is the inverse limit of $(\Phi_i(\mu))_{i\in \mathbb{N}}$.

We verify that $\mbox{\rm M}_\varphi$ has at most $n$ ergodic measures. Toward a contradiction, assume there are at least $n+1$ ergodic measures $\mu_0, \dots, \mu_{n}$ for $(X, \varphi)$. Then there is a large enough $i\in\mathbb{N}$ such that $\Phi_i(\mu_j)=\theta_{\infty,i}(\mu_j)$, $0\leq j\leq n$, are pairwise distinct and linearly independent. Let $S$ be the simplex generated by $\mu_0, \dots, \mu_n$. Then $\theta_{\infty,i}$ is an injection on $S$. Since $S$ is at least $n$-dimensional, so is $\theta_{\infty,i}(S)=S_i$, a contradiction.
\end{proof}

\subsection{Upper bounds}
In this subsection we give some upper bounds for the orbit equivalence for minimal subshifts of finite topological rank. For $N\geq 2$, denote the orbit equivalence for minimal subshifts of topological rank $N$ by $R_N$. We will show that for $N\geq 2$, $R_N$ is Borel reducible to a countable Borel equivalence relation, and furthermore, for $N=2$, $R_2$ is Borel reducible to the orbit equivalence relation induced by an action of a countable amenable group. In view of the recent result of Naryshkin and Vaccaro \cite{NV}, it follows that $R_2$ is hyperfinite if and only if it is treeable.





For $N\geq 2$ and $1\leq K\leq N$, let $\mathcal{R}_{N,K}$ be the class of all minimal subshifts of topological rank $N$ and with exactly $K$ ergodic measures. By \cite[Theorem 3.3]{GL} and Corollary~\ref{cor:KEM}, $\mathcal{R}_{N,K}$ is a Borel subset of $M(\mathcal{C})$. By Theorem~\ref{thm:GPS} (3), $\mathcal{R}_{N,K}$ is invariant under $R_N$.  We let $R_{N,K}$ denote the restriction of $R_N$ on $\mathcal{R}_{N,K}$. For $L\geq 1$, let $\mathcal{R}_{N,K,L}$ be the subset of $\mathcal{R}_{N,K}$ consisting of all minimal subshifts $(X, \varphi)$ where the maximal cardinality of a $\mathbb{Q}$-linearly independent set of $\Gamma_\varphi$ has size $L$. It is easily seen that $\mathcal{R}_{N,K,L}$ is a Borel subset of $R_{N,K}$. By Theorem~\ref{thm:GPS} (2), $\mathcal{R}_{N,K,L}$ is invariant under $R_N$. Let $R_{N,K,L}$ denote the restriction of $R_N$ on $\mathcal{R}_{N,K,L}$.

\begin{lemma}\label{lem:NKL} $\mathcal{R}_{N,K,L}=\varnothing$ for $L>N$.
\end{lemma}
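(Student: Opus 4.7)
The plan is to show that for any Cantor minimal system $(X,\varphi)$ of topological rank $N$, the countable abelian group $\Gamma_\varphi$ has $\mathbb{Q}$-rank at most $N$, i.e.\ no $\mathbb{Q}$-linearly independent subset of $\Gamma_\varphi$ has size greater than $N$. Since $\mathcal{R}_{N,K,L}$ consists of systems for which this $\mathbb{Q}$-rank is exactly $L$, this immediately yields $\mathcal{R}_{N,K,L} = \varnothing$ whenever $L > N$.

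First I would invoke the Bratteli--Vershik representation: since $(X,\varphi)$ has topological rank $N$, it is topologically conjugate to a Bratteli--Vershik system associated with an ordered Bratteli diagram whose levels have at most $N$ vertices each, say $|V_i| = n_i \leq N$. Equivalently, in the nested Kakutani--Rohlin representation discussed in Section~\ref{sec:2}, every partition $\mathcal{P}_i$ has at most $N$ towers. By the classical construction of Herman--Putnam--Skau, the dimension group $K^0(X,\varphi)$ is then isomorphic (as an ordered abelian group with distinguished order unit) to the inductive limit of the system
$$\mathbb{Z}^{n_0} \xrightarrow{M_0} \mathbb{Z}^{n_1} \xrightarrow{M_1} \mathbb{Z}^{n_2} \xrightarrow{M_2} \cdots,$$
where each $M_i$ is the transpose of the incidence matrix between consecutive levels of the Bratteli diagram.

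Tensoring with $\mathbb{Q}$ (which preserves directed colimits), this presents $K^0(X,\varphi)\otimes_{\mathbb{Z}}\mathbb{Q}$ as a directed union of $\mathbb{Q}$-vector subspaces, each being the image of some $\mathbb{Q}^{n_i}$ and therefore of $\mathbb{Q}$-dimension at most $N$. Any directed union of subspaces of dimension at most $N$ in a fixed ambient space still has dimension at most $N$, so $\dim_{\mathbb{Q}}(K^0(X,\varphi)\otimes_{\mathbb{Z}}\mathbb{Q}) \leq N$. Passing to the quotient $K^0(X,\varphi)/\mbox{Inf}(K^0(X,\varphi))$ can only decrease the $\mathbb{Q}$-rank, and by Lemma~\ref{lem:dim} this quotient is group-isomorphic to $\Gamma_\varphi$. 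Therefore $\Gamma_\varphi$ has $\mathbb{Q}$-rank at most $N$, which is exactly the bound we want.

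The only delicate point is the identification of the Downarowicz--Maass notion of topological rank $N$ with the existence of a Bratteli diagram having $n_i \leq N$ vertices at every level (equivalently, a nested sequence of Kakutani--Rohlin partitions with at most $N$ towers each); this is precisely the setup underlying the proof of Lemma~\ref{lem:MSN} and is spelled out in \cite[Section 2.5]{GL}. Beyond this bookkeeping, the argument reduces to a dimension count on an inductive limit of free abelian groups of bounded rank, with no combinatorial or measure-theoretic obstruction.
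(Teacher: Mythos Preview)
Your proof is correct and takes a somewhat different, more structural route than the paper's. The paper invokes the Donoso--Durand--Maass--Petite theorem \cite[Theorem~4.1(1)]{DDMP} to obtain a primitive recognizable generating sequence $\mathbf{v}=(v_{n,i})_{n\in\mathbb{N},\,0\le i<N}$ with $X\cong X_{\mathbf{v}}$, and then argues directly at the level of measures: any finite family $f_1,\dots,f_L\in C(X,\mathbb{Z})$ is, for large enough $n$, a $\mathbb{Z}$-linear combination of the characteristic functions $1_{\sigma^j[v_{n,i}]}$; by shift-invariance the vectors $\bigl(\int f_\ell\,d\mu_1,\dots,\int f_\ell\,d\mu_K\bigr)$ therefore lie in the $\mathbb{Z}$-span of the $N$ vectors $\bigl(\mu_1([v_{n,i}]),\dots,\mu_K([v_{n,i}])\bigr)$, $0\le i<N$, forcing $L\le N$. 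You instead bound the $\mathbb{Q}$-rank of the full dimension group $K^0(X,\varphi)$ via its presentation (from \cite{HPS}) as an inductive limit of groups $\mathbb{Z}^{n_i}$ with $n_i\le N$, and then descend to the quotient $\Gamma_\varphi$ via Lemma~\ref{lem:dim}. Both arguments express the same phenomenon---at each level the relevant group is generated by at most $N$ elements---but yours is slightly more general (it applies to any Cantor minimal system of topological rank at most $N$, not just subshifts, and bounds the rank of $K^0$ itself rather than only its quotient), while the paper's stays entirely within the $\mathcal{S}$-adic formalism already set up in Section~\ref{sec:3} and avoids the detour through $K^0$.
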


\begin{proof}
By a theorem of Donoso, Durand, Maass and Petite \cite[Theorem 4.1 (1)]{DDMP}, for any minimal subshift $(X,\varphi)\in R_N$, there exists a primitive and recognizable generating sequence ${\bf v}=(v_{n, i})_{n\in\mathbb{N},0\le i< N}$ so that $(X, \varphi)$ is topologically conjugate to $(X_{\bf v}, \sigma)$. In particular, if $(X, \varphi)\in \mathcal{R}_{N,K,L}$ then so is $(X_{\bf v},\sigma)$ and we have $\Gamma_\varphi=\Gamma_\sigma$. Let $\mu_1, \dots, \mu_K$ be the ergodic measures of $(X_{\bf v}, \sigma)$. Let $\{f_1, f_2,\dots, f_L\}$ be a set of $C(X,\mathbb Z)$. Suppose that the set
$$ \left\{ \left(\int f_\ell\, d\mu_1, \dots, \int f_\ell\, d\mu_K\right)\colon 1\leq \ell\leq L\right\} $$
is $\mathbb{Q}$-linearly independent. Note that there is $n\in\mathbb{N}$ such that each $f_\ell$, $1\leq \ell\leq L$, is a $\mathbb{Z}$-linear combination of elements in the set
$$ \left\{ 1_{\sigma^j([v_{n,i}])}\colon 0\leq i<N, 0\leq j<|v_{n,i}|\right\}, $$
where for any $A\subseteq X_{\bf v}$, $1_A$ denotes the characteristic function of $A$. It follows that for each $1\leq \ell\leq L$, 
$$ \left(\int f_\ell\, d\mu_1, \dots, \int f_\ell\, d\mu_K\right) $$
is a $\mathbb{Z}$-linear combination of elements in the set
$$ \left\{ \big(\mu_1([v_{n,i}]), \dots, \mu_K([v_{n,i}])\big)\colon 0\leq i<N\right\}. $$
This implies that $L\leq N$. 
\end{proof}


Now for any $1\le K, L\le N$, let $E_{N,K,L}$ be the equivalence relation on 
$$ \Sigma=(\mathbb{R}^K)^L\times 2^{\mathbb{Q}^L} $$
defined as follows: for $\vec{a}_1,\cdots, \vec{a}_L, \vec{b}_1,\cdots, \vec{b}_L\in \mathbb{R}^K$ and $p,q\in 2^{\mathbb{Q}^k}$, 
$$ \mbox{$(\vec{a}_1,\dots, \vec{a}_L,p)$ and $(\vec{b}_1,\dots, \vec{b}_L,q)$ are $E_{N,K,L}$-equivalent } $$
if and only if there is $\phi \in \mbox{\rm Sym}(K)$ and $M\in \mbox{\rm GL}(L,\mathbb{Q})$ such that for every $\vec{r}\in\mathbb{Q}^L$,
$$M(\phi(\vec{a}_1),\dots, \phi(\vec{a}_L))^T=(\vec{b}_1,\dots, \vec{b}_L)^T \mbox{ and } q(\vec{r})=p(\vec{r}M),$$ 
where for $\vec{c}=(c_1,\dots, c_K)\in \mathbb{R}^K$, $\phi(\vec{c})=\phi(c_1,\dots, c_K)=(c_{\phi(1)},\dots, c_{\phi(K)})$.

It is easily seen that $E_{N,K,L}$ is a countable Borel equivalence relation induced by a continuous action of $\mbox{\rm GL}(L,\mathbb{Q})\times \mbox{\rm Sym}(K)$ on $\Sigma$. We show that $R_{N,K,L}$ is Borel reducible to $E_{N,K,L}$.

For any $(X,\varphi)\in \mathcal{R}_{N,K,L}$, let $\{ {\vec{a}}^\varphi_1, \dots, \vec{a}^\varphi_L\}$ be  a maximal $\mathbb{Q}$-linearly independent set of $\Gamma_\varphi$. Such a set can be obtained in a Borel way. Then for every $f \in C(X,\mathbb Z)$, there exists $\vec{r}\in\mathbb{Q}^L$ such that 
\begin{equation}\label{eq:m}
\left(\int f\, d\mu_1, \dots, \int f\, d\mu_K\right)=\vec{r}(\vec{a}^\varphi_1, \dots, \vec{a}^\varphi_L)^T.
\tag{$\dagger$}
\end{equation} 
Define  $p^\varphi\in2^{\mathbb{Q}^k}$ by letting $p^\varphi(\vec{r})=1$ if and only if there is  $f \in C(X,\mathbb{Z})$ such that identity (\ref{eq:m}) holds. Thus  we have defined a map $\pi\colon \mathcal{R}_{N,K,L} \to \Sigma$ by
$$ (X,\varphi)\mapsto (\vec{a}^\varphi_1, \dots, \vec{a}^\varphi_L, p^\varphi). $$
It is routine to see that $\pi$ is a Borel map. We verify that $\pi$ witnesses that $R_{N,K,L}$ is Borel reducible  to $E_{N,K,L}$.

First suppose $(X,\varphi), (Y, \psi)\in \mathcal{R}_{N,K,L}$ are orbit equivalent. By Lemma~\ref{lem:OEkey} there is $\phi\in\mbox{\rm Sym}(K)$ such that $\Gamma_\varphi=\phi(\Gamma_\psi)$. Thus $\phi(\vec{a}^\varphi_1), \dots, \phi(\vec{a}^\varphi_L)$ and $\vec{a}^\psi_1, \dots, \vec{a}^\psi_L$ can represent each other as $\mathbb{Q}$-linear combinations. This means that there is $M\in\mbox{\rm GL}(L, \mathbb{Q})$ such that  
$$ M(\phi(\vec{a}^\varphi_1),\dots, \phi(\vec{a}^\varphi_L))^T=(\vec{a}^\psi_1,\dots, \vec{a}^\psi_L)^T. $$
Moreover, since 
$$ \Gamma_\psi=\left\{ \vec{r}(\vec{a}^\psi_1,\dots, \vec{a}^\psi_L)^T\colon p^\psi(\vec{r})=1\right\}, $$
we have that $p^\varphi(\vec{r})=p^\psi(\vec{r}M)$.  This means  that $\pi(X,\varphi)E_{N,K,L} \pi(Y,\psi)$.
Conversely, if $\pi(X,\varphi)E_{N,K,L} \pi(Y,\psi)$, then there is $\phi\in\mbox{\rm Sym}(K)$ such that $\Gamma_\varphi=\phi(\Gamma_\psi)$. By Lemma~\ref{lem:OEkey}, $(X,\varphi)$ and $(X',\varphi)$ are orbit equivalent.

Thus we have proved the following theorem.

\begin{theorem}\label{thm:rn} For any $N\geq 2$, $R_N$ is virtually countable.
\end{theorem}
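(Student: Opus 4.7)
The plan is to assemble the pieces already developed in the excerpt. By Lemma~\ref{lem:MSN}, any minimal subshift of topological rank $N$ has between $1$ and $N$ ergodic measures, and by Lemma~\ref{lem:NKL} the associated invariant $L$ (the maximal size of a $\mathbb{Q}$-linearly independent subset of $\Gamma_\varphi$) also lies in $\{1,\dots,N\}$. Consequently the Borel sets $\mathcal{R}_{N,K,L}$, with $1\le K,L\le N$, form a finite Borel partition of $\mathcal{R}_N$ into $R_N$-invariant pieces, so that
\[
R_N=\bigsqcup_{1\le K,L\le N}R_{N,K,L}.
\]

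For each pair $(K,L)$ with $1\le K,L\le N$, the Borel map $\pi\colon\mathcal{R}_{N,K,L}\to\Sigma$ constructed just before the theorem witnesses $R_{N,K,L}\le_B E_{N,K,L}$, and $E_{N,K,L}$ is a countable Borel equivalence relation, being induced by a Borel action of the countable group $\mbox{\rm GL}(L,\mathbb{Q})\times\mbox{\rm Sym}(K)$ on the standard Borel space $\Sigma=(\mathbb{R}^K)^L\times 2^{\mathbb{Q}^L}$.

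Now form the disjoint union
\[
E=\bigsqcup_{1\le K,L\le N}E_{N,K,L},
\]
realized as a Borel equivalence relation on the standard Borel space $\Sigma\times\{(K,L)\colon 1\le K,L\le N\}$. Since a finite disjoint union of countable Borel equivalence relations is again a countable Borel equivalence relation, $E$ is countable Borel. Combining the reductions $\pi_{K,L}\colon\mathcal{R}_{N,K,L}\to\Sigma$ into the single Borel map $(X,\varphi)\mapsto(\pi_{K,L}(X,\varphi),(K,L))$ on the piece $\mathcal{R}_{N,K,L}$ yields a Borel reduction $R_N\le_B E$. There is no hard step here: all the real work has already been done in the construction of $\pi$ and in Lemmas~\ref{lem:MSN} and~\ref{lem:NKL}; the only thing to observe is that the indexing parameters $(K,L)$ range over a finite set, so reducing $R_N$ to the finite disjoint union of the $E_{N,K,L}$'s preserves countability.
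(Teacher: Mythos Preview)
Your proposal is correct and follows essentially the same approach as the paper: the paper simply writes ``Thus we have proved the following theorem'' immediately after constructing the Borel reductions $\pi\colon\mathcal{R}_{N,K,L}\to\Sigma$, leaving the finite-disjoint-union step you spell out as implicit. One small notational point: since $\Sigma=(\mathbb{R}^K)^L\times 2^{\mathbb{Q}^L}$ depends on $(K,L)$, the disjoint union should be over the varying spaces $\Sigma_{K,L}$ rather than a single $\Sigma$ crossed with the index set, but this does not affect the argument.
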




Our proof gives the following immediate corollary.

\begin{corollary}\label{cor:L1} For any $N\geq 2$ and $1\leq K\leq N$, $R_{N,K,1}$ is virtually hyperfinite.
\end{corollary}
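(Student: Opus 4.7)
The plan is to read off the corollary directly from the reduction that established Theorem~\ref{thm:rn}, specialized to $L=1$. The theorem gave a Borel reduction $\pi\colon \mathcal{R}_{N,K,L}\to \Sigma$ from $R_{N,K,L}$ to $E_{N,K,L}$, and $E_{N,K,L}$ was shown to be a countable Borel equivalence relation induced by a continuous action of $\mbox{\rm GL}(L,\mathbb{Q})\times \mbox{\rm Sym}(K)$. Specializing to $L=1$ gives a Borel reduction of $R_{N,K,1}$ to a countable Borel equivalence relation arising from a Borel action of $\mbox{\rm GL}(1,\mathbb{Q})\times \mbox{\rm Sym}(K)=\mathbb{Q}^\ast\times \mbox{\rm Sym}(K)$. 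So it suffices to show that every such orbit equivalence relation is hyperfinite, and then $R_{N,K,1}$ is virtually hyperfinite by composition of Borel reductions.

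To verify hyperfiniteness, I would argue in two steps. First, $\mathbb{Q}^\ast$ is a countable abelian group, and by the Gao--Jackson theorem (every Borel action of a countable abelian group induces a hyperfinite orbit equivalence relation), any Borel action of $\mathbb{Q}^\ast$ gives a hyperfinite equivalence relation. Second, $\mathbb{Q}^\ast\times \mbox{\rm Sym}(K)$ contains the normal subgroup $\mathbb{Q}^\ast\times\{e\}$ of finite index $K!$, and the orbit equivalence relation induced by a finite-index subgroup is a finite-index subequivalence relation of the whole orbit equivalence relation. Since hyperfiniteness is preserved under finite-index extensions (each equivalence class is covered by at most $K!$ classes of the subequivalence relation), it follows that $E_{N,K,1}$ is hyperfinite.

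Putting these together, $R_{N,K,1}\leq_B E_{N,K,1}$ and $E_{N,K,1}$ is a (countable) hyperfinite Borel equivalence relation, so $R_{N,K,1}$ is virtually hyperfinite by definition. The main conceptual point is the dimension reduction to $L=1$: having a one-dimensional $\Gamma_\varphi$ as a $\mathbb{Q}$-module collapses the acting linear group to the abelian group $\mathbb{Q}^\ast$, so the amenability we need for hyperfiniteness is strengthened to the abelian case already handled by Gao--Jackson. There is no real obstacle; the corollary is essentially a bookkeeping consequence of Theorem~\ref{thm:rn} once one observes that the $L=1$ specialization of the acting group falls within a class for which Weiss's question has an affirmative answer.
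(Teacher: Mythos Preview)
Your proposal is correct and follows essentially the same route as the paper: specialize the reduction from Theorem~\ref{thm:rn} to $L=1$, observe that $\mbox{\rm GL}(1,\mathbb{Q})=\mathbb{Q}^\ast$ is abelian so its orbit equivalence relation is hyperfinite by Gao--Jackson, and then pass to the finite-index extension by $\mbox{\rm Sym}(K)$ using the Jackson--Kechris--Louveau result that a countable Borel equivalence relation each of whose classes contains only finitely many classes of a hyperfinite subequivalence relation is itself hyperfinite. The paper's proof is identical in structure, citing \cite[Corollary 8.2]{GJ} and \cite[Proposition 1.3 (vii)]{JKL} at exactly these two steps.
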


\begin{proof} In this case $E_{N,K,1}$ is given by an action of the group $\mbox{\rm GL}(1,\mathbb{Q})\times \mbox{\rm Sym}(K)$ on $\Sigma$. Note that $\mbox{\rm GL}(1,\mathbb{Q})$ is the multiplicative group of $\mathbb{Q}\setminus\{0\}$, hence is abelian. The action induces an action of $\mbox{\rm GL}(1,\mathbb{Q})$ on $\Sigma$. Let $F$ be the equivalence relation induced by the action of $\mbox{\rm GL}(1, \mathbb{Q})$. Then $F\subseteq E_{N, K, 1}$ is a subequivalence relation with the property that every $E_{N,K,1}$-class contains finitely many $F$-classes.  Since $\mbox{\rm GL}(1,\mathbb{Q})$ is abelian, by a theorem of Gao and Jackson \cite[Corollary 8.2]{GJ}, $F$ is hyperfinite. It follows from a theorem of Jackson, Kechris and Louveau \cite[Proposition 1.3 (vii)]{JKL} that $E_{N,K, 1}$ is hyperfinite.
\end{proof}



By the same technique, and with a bit more care, we can handle the case $N=2$ and get a nontrivial upper bound.

\begin{theorem} For any $N\geq 2$ and $1\leq K\leq N$, $R_{N, K, 2}$ is Borel reducible to an orbit equivalence relation induced by a Borel action of a countable amenable group. In particular, $R_2$ is Borel reducible to an orbit equivalence relation induced by a Borel action of a countable amenable group. Consequently, $R_2$ is virtually amenable. 
\end{theorem}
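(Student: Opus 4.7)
The plan is to refine the proof of Theorem~\ref{thm:rn} in the case $L=2$, exploiting the fact that $1^K = (1,\dots,1)$ always lies in $\Gamma_\varphi$ (since the constant function $\mathbf{1}\in C(X,\mathbb{Z})$ contributes it). For each $(X,\varphi)\in \mathcal{R}_{N,K,2}$, I will Borel-select a $\mathbb{Q}$-basis of $\Gamma_\varphi\otimes\mathbb{Q}$ of the normalized form $(1^K, \vec{a}_2^\varphi)$, where $\vec{a}_2^\varphi$ is obtained via Kuratowski--Ryll-Nardzewski selection from a Borel enumeration of $\Gamma_\varphi$. With this normalization, I define $\pi(X,\varphi) := (\vec{a}_2^\varphi, p^\varphi) \in \mathbb{R}^K \times 2^{\mathbb{Q}^2}$ as in Theorem~\ref{thm:rn}.

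The key observation concerns the change-of-basis matrix. If $(X,\varphi), (Y,\psi) \in \mathcal{R}_{N,K,2}$ are orbit equivalent, then by Lemma~\ref{lem:OEkey} there exist $\phi\in \mbox{\rm Sym}(K)$ and $M\in \mbox{\rm GL}(2,\mathbb{Q})$ with $M(1^K, \phi(\vec{a}_2^\psi))^T = (1^K, \vec{a}_2^\varphi)^T$. Since $\phi(1^K) = 1^K$ and $\{1^K, \phi(\vec{a}_2^\psi)\}$ is $\mathbb{Q}$-linearly independent, the first row of $M$ is forced to be $(1,0)$. Thus $M$ lies in the subgroup
$$ B := \left\{ \begin{pmatrix} 1 & 0 \\ r & s \end{pmatrix} : r\in \mathbb{Q},\ s\in\mathbb{Q}\setminus\{0\} \right\} \subseteq \mbox{\rm GL}(2,\mathbb{Q}), $$
which is isomorphic to the affine group $\mathbb{Q}\rtimes (\mathbb{Q}\setminus\{0\})$, hence countable and solvable (in particular amenable). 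The group $G := B\times \mbox{\rm Sym}(K)$ is then countable and amenable, and acts naturally on $\mathbb{R}^K \times 2^{\mathbb{Q}^2}$ as the restriction of the $\mbox{\rm GL}(2,\mathbb{Q})\times \mbox{\rm Sym}(K)$-action from Theorem~\ref{thm:rn}. The verification that $\pi$ is a Borel reduction from $R_{N,K,2}$ to $E_G^{\mathbb{R}^K\times 2^{\mathbb{Q}^2}}$ then proceeds exactly parallel to Theorem~\ref{thm:rn}, with the first-row calculation above supplying the final ingredient.

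For the ``in particular'' and ``consequently'' statements: by Lemma~\ref{lem:NKL}, $\mathcal{R}_2$ is the Borel disjoint union of the $R_2$-invariant pieces $\mathcal{R}_{2,K,L}$ for $(K,L)\in \{(1,1),(2,1),(2,2)\}$. The first two restrictions are virtually hyperfinite by Corollary~\ref{cor:L1}, and $R_{2,2,2}$ is reduced to an amenable countable group action by the above. Combining the three reductions by letting the product of the witnessing (countable amenable) groups act componentwise on the disjoint union of target spaces yields a single Borel reduction of $R_2$ to an orbit equivalence of a countable amenable group, establishing virtual amenability. The main technical obstacle I expect is the Borel selection of $\vec{a}_2^\varphi$: one enumerates $\Gamma_\varphi$ via a Borel enumeration of $C(\mathcal{C},\mathbb{Z})$ together with the Borel map from Corollary~\ref{cor:KEM}, and then selects the first element $\mathbb{Q}$-linearly independent from $1^K$, noting that this last condition is Borel in $\mathbb{R}^K$.
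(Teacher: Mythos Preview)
Your proposal is correct and follows essentially the same route as the paper: both fix $1^K$ as one basis vector, observe that the resulting change-of-basis matrices lie in the affine subgroup $B\cong \mathbb{Q}\rtimes(\mathbb{Q}\setminus\{0\})$ of $\mbox{\rm GL}(2,\mathbb{Q})$, and conclude via solvability that $G=B\times\mbox{\rm Sym}(K)$ is amenable; the ``in particular'' is then obtained by combining with Lemma~\ref{lem:NKL} and Corollary~\ref{cor:L1}. One small slip: your list of pieces $\{(1,1),(2,1),(2,2)\}$ for $\mathcal{R}_2$ omits $(K,L)=(1,2)$, which can be nonempty (a uniquely ergodic rank-$2$ subshift with $\Gamma_\varphi$ of $\mathbb{Q}$-dimension $2$); however, this case is already handled by your own first part, so the argument remains complete once you add it to the enumeration.
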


\begin{proof} The key point is to note that if $(X, \varphi)\in \mathcal{R}_{N,K, L}$, then we may always choose $1^K$ to be an element of a maximally $\mathbb{Q}$-linearly independent subset of $\Gamma_\varphi$. Thus if $1\leq K\leq 2$ and $L=2$, then we may assume $1^K$ and $\vec{a}^\varphi\in \mathbb{R}^K$ form a maximally $\mathbb{Q}$-linearly independent subset of $\Gamma_\varphi$. 

Define an equivalence relation $Q_K$ on $\mathbb{R}^K\times 2^{\mathbb{Q}^2}$ as follows. For $\vec{a}, \vec{b}\in \mathbb{R}^K$ and $p,q\in 2^{\mathbb{Q}^2}$, 
$$ \mbox{ $(\vec{a}, p)$ and $(\vec{b}, q)$ are $Q_K$-equivalent} $$
if and only if there are $\phi\in\mbox{\rm Sym}(K)$ and $r_0, s_0\in \mathbb{Q}$ with $r_0\neq 0$ such that
$$ r_0\phi(\vec{a})+s_01^K=\vec{b} $$
and for any $r, s\in \mathbb{Q}$,
$$ p(r, s)=q(rr_0, rs_0+s). $$

Note that $Q_K$ is the orbit equivalence relation induced by a continuous action of the group
$$ G=(\mbox{\rm GL}(1, \mathbb{Q})\ltimes \mathbb{Q})\times \mbox{\rm Sym}(K), $$
where $\mathbb{Q}$ is the additive group and the semi-direct product is defined by
$$ (r, s)(r',s')=(rr', rs'+s) $$
for $r,r'\in\mathbb{Q}\setminus\{0\}$ and $s, s'\in\mathbb{Q}$. Since $G$ is solvable of rank $2$, it is amenable. 

By a similar argument as in the proof of Theorem~\ref{thm:rn}, we have that $R_{N, K, 2}$ is Borel reducible to $Q_K$. 

The second part of the theorem follows from Lemma~\ref{lem:NKL} and Corollary~\ref{cor:L1}.
\end{proof}

\subsection{Lower bounds}
In this final subsection we give some lower bounds for $R_N$, $N\geq 2$, in the Borel reducibility hierarchy. Along the way we also construct minimal subshifts of topological rank exactly $N$, which seems to be new. The minimal subshifts we construct will again be Toeplitz subshifts.

For $N\ge2$, let $F_N$ be the equivalence relation on $\mathbb{R}^{N-1}$ such that $$(x_1,\cdots,x_{N-1})F_N (y_1,\cdots,y_{N-1})$$ if and only if there is $M\in \mbox{\rm GL}(N,\mathbb{Q})$ such that
$$(x_1,\cdots,x_{N-1},1)M=(y_1,\cdots,y_{N-1},1). $$

Let $\mathsf{A}_N=\{1, \dots, N\}$. We will show that for any $N\ge2$, $F_N$ is Borel reducible to the orbit equivalence for $\mathsf{A}_N$-Toeplitz subshifts of topological rank $N$.

Fix the parameter $(x_1,\cdots,x_{N-1})\in\mathbb{R}^{N-1}$. Take $0<y_i\le1/n$ such that $y_i-x_i\in\mathbb{Q}$ for every $1\le i\leq N-1$. We will define a generating sequence ${\bf v}=(v_{n,i})_{n\in\mathbb{N},1\le i\le N}$ so that the following hold:
\begin{itemize}
\item $(X_{\bf v},\sigma)$ is a regular $\mathsf{A}_N$-Toeplitz subshift;
\item letting $\mu$ be the unique $\varphi$-invariant measure on $X_{\bf v}$, we have $y_i=\mu([i])$ for $1\le i\leq N-1$;
\item and
 $$\Gamma_\sigma=\left\{(q_1,\cdots,q_N)(x_1,\cdots,x_{N-1},1)^T\colon (q_1,\dots, q_N)\in \mathbb{Q}^N\right\}.$$
\end{itemize}
Granting this, we would obtain a map witnessing that $F_N$ is Borel reducible to the orbit equivalence for $\mathsf{A}_N$-Toeplitz subshifts.  In fact, if $$(x_1,\cdots,x_{N-1}), (x'_1,\cdots,x'_{N-1})\in\mathbb{R}^{N-1},$$ then by the last condition of the construction, they are $F_N$-equivalent if and only if
$$\begin{array}{c} \{(q_1,\cdots,q_N)(x_1,\cdots,x_{N-1},1)^T\colon (q_1,\dots, q_N)\in\mathbb{Q}^N\} \\ =\{(q_1,\cdots,q_N)(x'_1,\cdots,x'_{N-1},1)^T\colon (q_1,\dots, q_N)\in\mathbb{Q}^N\},
\end{array}$$ 
which holds if and only if $(X_{\bf v},\sigma)$ and $(X_{{\bf v}'}, \sigma)$ are orbit equivalent by Lemma~\ref{lem:OEkey}.

Next we define $(v_{n,i})_{n\in\mathbb{N},1\le i\le N}$ by induction on $n$. For $n=0$, let $v_{0,i}=i$ for every $1\le i\le N$. Let $c_{0,j}=y_j$ for $1\le j\le N-1$ and 
$$c_{0,N}=1-\sum_{1\le j\le N-1}y_j.$$ Let $h_0=1$. Let $h_1\ge1$ be a large enough even number and take $k_{1,i}\ge1$ for each $1\le i\le N$ to be an even number such that $$c_{0,i}-\frac{1}{2N}< \displaystyle\frac{k_{1,i}}{h_1}< c_{0,i}.$$ 
Let 
$$r_1=h_1-\sum_{1\le i\le N} k_{1,i}=h_1\left(1-\sum_{1\leq i\leq N}\frac{k_{1,i}}{h_1}\right). $$
Then 
$$ 0=h_1\left(1-\sum_{1\leq i\leq N} c_{0,i}\right)<r_1<\frac{h_1}{2}. $$
We can construct $(v_{1,i})_{1\le i\le N}$ such that for every $1\le i\le N$,
\begin{itemize}
\item $v_{1,i}$ is bulit from $S_0=\{v_{0,j}\colon 1\le j\le N\}$;
\item $|v_{1,i}|=h_1$;
\item for each $1\leq j\leq N$, the number of (expected) occurrences of $v_{0,j}=j$ in $v_{1,i}$ is $k_{1,j}+r_1\delta(i,j)$, where 
$$ \delta(i,j)=\left\{\begin{array}{ll} 0, & \mbox{ if $i\neq j$,} \\
1, & \mbox{ if $i=j$;}\end{array}\right.
$$
\item the first and last three terms of the building of $v_{1,i}$ from $S_0$ are both $v_{0,1}v_{0,2}v_{0,1}$, and the other occurrences of $v_{0,2}$ occur in pairs.
\end{itemize}
For every $1\le i\le N$, let 
$$c_{1,i}=\displaystyle\frac{1}{r_{1}}\left(c_{0,i}-\displaystyle\frac{k_{1,i}}{h_{1}}\right).$$ 

In general, suppose we have defined $v_{m,i}$, $h_m$, $k_{m,i}, r_m$ and $c_{m,i}$  for $1\le m\le n, 1\le i\le N$ such that the following hold:
\begin{enumerate}
\item For each $1\leq m\leq n$ and $1\leq i\leq N$, $v_{m,i}$ is built from $S_{m-1}=\{v_{m-1, i}\colon 1\leq i\leq N\}$;
\item The sequence $(v_{m,i})_{0\leq m\leq n, 1\leq i\leq N}$ satisfies the conditions of constant length $(h_m)_{0\leq m\leq n}$ and being proper, primitive and recognizable;
\item For every $1\leq m\leq n$, there is a subset $J$ of $\{0,\dots, h_m-1\}$ with $|J|\geq (1-1/m)h_m$ such that for any $j\in J$, $v_{m,i}(j)$ take the same value for all $1\leq i\leq N$;

\item Letting $T^{m,i}_{m', i'}$ be the number of expected occurrences of $v_{m,i}$ in $v_{m',i'}$ for $0\leq m<m'\leq n$ and $1\le i, i'\leq N$, then for all $0\le m<n$ and $1\leq i,j\leq N$, we have
$$ T^{m,j}_{m+1,i}=k_{m+1,j}+r_{m+1}\delta(i,j) $$ 
and 
$$c_{m+1,i}=\displaystyle\frac{1}{r_{m+1}} \left(c_{m,i}-\displaystyle\frac{k_{m+1,i}}{h_{m+1}}\right); $$ 

\item For each $1\le m\le n$ and $1\leq i\leq N$, we have
$$ \displaystyle\frac{k_{m,i}}{h_{m}}<c_{m-1,i}, \ r_{m}=\displaystyle\frac{h_{m}}{h_{m-1}}-\displaystyle\sum_{1\le i\le N}k_{m,i}, $$ $h_m$ and $r_m$ are positive multiples of $m$, and
$$\sum_{1\le i\le N}c_{m,i}=\displaystyle\frac{1}{h_m}; $$

\item For every $0\le m< m'\le n$ and $1\le i,i'\le  N$, 
$$\left|\displaystyle\frac{T^{m,i}_{m',i'}}{h_{m'}}-c_{m,i}\right|\le\frac{1}{2^{m'}}.$$
\end{enumerate}

Before defining $(v_{n+1,i})_{1\leq i\leq N}$ we note that inductive hypotheses (4) and (5) imply that
for any $0\leq m< n$ and $1\leq j\leq N$, 
$$ c_{m,j}=\displaystyle\sum_{1\leq i\leq N} T^{m,j}_{m+1,i}c_{m+1,i}. $$
From this it follows that for any $0\leq m<m'\leq n$ and $1\leq j\leq N$, 
$$ c_{m,j}=\displaystyle\sum_{1\leq j'\leq N} T^{m,j}_{m',j'}c_{m',j'}. $$


Similar to the claim in the proof of Theorem~\ref{thm:Toe}, we get an $\epsilon>0$ such that the following holds. 
\begin{quote}
Suppose each $v_{n+1,i}$, $1\leq i\leq N$, is built from $S_n=\{v_{n,i}\colon 1\leq i\leq N\}$, with $|v_{n+1, i}|=h_{n+1}$. Suppose for any $1\leq m\leq n$ and $1\leq i,j\leq N$, the number of expected occurrences of $v_{m,j}$ in $v_{n+1,i}$ is $T^{m,j}_{n+1,i}$. If for all $1\leq i,j\leq N$, we have
$$ \left|\displaystyle\frac{T^{n,j}_{n+1,i}}{h_{n+1}}-c_{n,j}\right|<\epsilon, $$
then for all $1\leq m\leq n$ and $1\leq i,j\leq N$,
$$ \left|\displaystyle\frac{T^{m,j}_{n+1,i}}{h_{n+1}}-c_{m,j}\right|<\displaystyle\frac{1}{2^{n+1}}. $$
\end{quote}

Now define $v_{n+1,i}$, $h_{n+1}$, $k_{n+1,i}$, $r_{n+1}$ and $c_{n+1,i}$ for $1\leq i\leq N$ to satisfy the following conditions:
\begin{itemize}
\item[(a)] For each $1\leq i\leq N$, $v_{n+1,i}$ is built from $S_n$ and has length $h_{n+1}$;
\item[(b)] $h_{n+1}$ is a sufficiently large multiple of $(n+1)h_n$;
\item[(c)] For each $1\leq i\leq N$, $k_{n+1,i}$ is a sufficiently large positive multiple of $2(n+1)$ with 
$$ c_{n,i}-\displaystyle\frac{\epsilon}{N}<\displaystyle\frac{k_{n+1,i}}{h_{n+1}}<c_{n,i};$$
\item[(d)] $r_{n+1}$ is a sufficiently large positive multiple of $2(n+1)$ so that
$$ r_{n+1}=\displaystyle\frac{h_{n+1}}{h_n}-\displaystyle\sum_{1\leq i\leq N} k_{n+1, i}; $$
\item[(e)] For each $1\leq i\leq N$,
$$c_{n+1,i}=\displaystyle\frac{1}{r_{n+1}} \left(c_{n,i}-\displaystyle\frac{k_{n+1,i}}{h_{n+1}}\right); $$ 
\item[(f)] For each $1\leq i,j\leq N$, letting
 $$ T^{n,j}_{n+1,i}=k_{n+1,j}+r_{n+1}\delta(i,j), $$ 
then $T^{n,j}_{n+1,i}$ is the number of expected occurrence of $v_{n,j}$ in the building of $v_{n+1,i}$ from $S_n$;
\item[(g)] For each $1\leq i\leq N$, the first and last three terms of the building of $v_{n+1,i}$ from $S_n$ are $v_{n,1}v_{n,2}v_{n,1}$, and the other expected occurrences of $v_{n,2}$ in $v_{n+1,i}$ occur in pairs;
\item[(h)] There is a subset $K\subseteq\{1,\dots, h_{n+1}/h_n\}$ such that $|K|\geq h_{n+1}/[(n+1)h_n]$ and for any $k\in K$, the $k$-th terms of the buildings of $v_{n+1,i}$ from $S_n$ are the same for all $1\leq i\leq N$.
\end{itemize}
Conditions (a)--(f) are straightforward to meet. Condition (g) can be met because for any $1\leq i,j\leq N$, $T^{n,j}_{n+1,i}$ is an even number by conditions (c) and (d). Condition (h) can be met following a similar argument as in the proof of Theorem~\ref{thm:Toe}. Also similarly, the inductive hypotheses are maintained by these definitions. 

This finishes the definition of the generating sequence ${\bf v}=(v_{n,i})_{n\in\mathbb{N}, 1\leq i\leq N}$. 

Now inductive hypotheses (1)--(3) guarantees that $(X_{\bf v}, \sigma)$ is a regular Toeplitz subshift. Let $\mu$ be the unique $\sigma$-invariant measure of $X_{\bf v}$. Then similar as before we have $\mu([v_{n,i}])=c_{n,i}$ for all $n\in\mathbb{N}$ and $1\leq i\leq N$. In particular, we get $\mu([v_{0,j}])=\mu([j])=c_{0,j}=y_j$ for $1\leq j\leq N-1$. By condition (b), we have $\mathbb{Q}\subseteq \Gamma_\sigma$. By condition (e), we have $y_j\mathbb{Q}\subseteq \Gamma_\sigma$ for each $1\leq j\leq N-1$. Therefore,
 $$\Gamma_\sigma=\left\{(q_1,\cdots,q_N)(y_1,\cdots,y_{N-1},1)^T\colon (q_1,\dots, q_N)\in \mathbb{Q}^N\right\}. $$
By the definition of $(y_j)_{1\leq j\leq N-1}$, we have
$$\begin{array}{c} \{(q_1,\cdots,q_N)(x_1,\cdots,x_{N-1},1)^T\colon (q_1,\dots, q_N)\in\mathbb{Q}^N\} \\ =\{(q_1,\cdots,q_N)(y_1,\cdots,y_{N-1},1)^T\colon (q_1,\dots, q_N)\in\mathbb{Q}^N\}.
\end{array}$$ 
Therefore, we finally get
 $$\Gamma_\sigma=\left\{(q_1,\cdots,q_N)(x_1,\cdots,x_{N-1},1)^T\colon (q_1,\dots, q_N)\in \mathbb{Q}^N\right\} $$
 as desired.

We now compute the exact topological rank of $(X_{\bf v},\sigma)$. First, by a result of Donoso, Durand, Maass and Petite \cite[Proposition 4.5]{DDMP} (also see \cite[Proposition 3.6 (ii)]{GLPS}), there is an ordered Bratteli diagram of rank $N$ whose Bratteli--Vershik system in topologically conjugate to $(X_{\bf v}, \sigma)$. This means that $(X_{\bf v},\sigma)$ has topological rank at most $N$. If we choose the parameter $(x_1,\dots, x_{N-1})\in \mathbb{R}^{N-1}$ so that $x_1, \dots, x_{N-1}$ and $1$ are $\mathbb{Q}$-linearly independent, then $(X_{\bf v},\sigma)\in\mathcal{R}_{N,1,N}$. By Lemma~\ref{lem:NKL}, $(X_{\bf v},\sigma)$ has topological rank exactly $N$. 

We have thus proved the following theorem.

\begin{theorem}\label{thm:FN}  For any $N\ge2$, $F_N$ is Borel reducible to the orbit equivalence for regular $\mathsf{A}_N$-Toeplitz subshifts of topological rank $N$. In particular, $F_N\leq_B R_{N,1}\leq_B R_N$.
\end{theorem}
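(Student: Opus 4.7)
The plan is to Borel-reduce $F_N$ to $R_{N,1}$ by constructing, in a Borel way from each parameter $(x_1,\dots,x_{N-1})\in\mathbb{R}^{N-1}$, a regular $\mathsf{A}_N$-Toeplitz subshift $(X_{\bf v},\sigma)$ of topological rank exactly $N$ (so uniquely ergodic, by a theorem of Jacobs--Keane) whose unique ergodic measure $\mu$ yields
$$\Gamma_\sigma=\left\{(q_1,\dots,q_N)(x_1,\dots,x_{N-1},1)^T\colon (q_1,\dots,q_N)\in\mathbb{Q}^N\right\}.$$
Granted this, since $\Gamma_\sigma$ is the $\mathbb{Q}$-module generated by $\{1,x_1,\dots,x_{N-1}\}$, Lemma~\ref{lem:OEkey} (with $K=1$, so the symmetry group $\mathrm{Sym}(1)$ is trivial) turns orbit equivalence into equality of $\mathbb{Q}$-modules, which is exactly the defining condition of $F_N$.

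I would first replace each $x_i$ by some $y_i\in(0,1/N)$ with $y_i-x_i\in\mathbb{Q}$, so that $\{1,y_1,\dots,y_{N-1}\}$ generates the same $\mathbb{Q}$-module and the $y_i$'s can be used as prescribed cylinder masses $\mu([i])$. Then I would inductively build a constant-length, proper, primitive, recognizable generating sequence $(v_{n,i})_{n\in\mathbb{N},\,1\le i\le N}$ with $v_{0,i}=i$ by arranging, at each step, that the expected occurrence counts take the special form $T^{n,j}_{n+1,i}=k_{n+1,j}+r_{n+1}\delta(i,j)$, where $k_{n+1,j}$ is common across $i$ and $r_{n+1}$ is a small ``diagonal'' perturbation. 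This shared-structure ansatz makes a proportion $(h_{n+1}/h_n-r_{n+1})/(h_{n+1}/h_n)$ of the building positions of $v_{n+1,i}$ from $S_n$ agree across $i$, which, when $r_{n+1}/(h_{n+1}/h_n)\to 0$, yields regularity via Proposition~\ref{prop:Toe}. In parallel I would track auxiliary reals $c_{n,i}$ (starting at $c_{0,j}=y_j$ and $c_{0,N}=1-\sum_j y_j$) satisfying $c_{n+1,i}=r_{n+1}^{-1}(c_{n,i}-k_{n+1,i}/h_{n+1})$, and choose $k_{n+1,i}/h_{n+1}$ to approximate $c_{n,i}$ within a tolerance $\epsilon_n$ computed from the finitely many earlier data, so that $T^{m,j}_{m',i'}/h_{m'}$ converges (rapidly) to $c_{m,j}$.

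Once the construction is in place, the routine verifications are: $(X_{\bf v},\sigma)$ is a regular $\mathsf{A}_N$-Toeplitz subshift; its unique invariant measure $\mu$ satisfies $\mu([v_{n,i}])=c_{n,i}$, in particular $\mu([j])=y_j$; $\mathbb{Q}\subseteq\Gamma_\sigma$ (from integrality of the $h_n$) and each $y_j\in\Gamma_\sigma$, hence $\Gamma_\sigma$ equals the $\mathbb{Q}$-module generated by $\{1,y_1,\dots,y_{N-1}\}=\{1,x_1,\dots,x_{N-1}\}\cdot\mathbb{Q}$. The map $(x_1,\dots,x_{N-1})\mapsto(X_{\bf v},\sigma)$ is Borel by inspection of the construction. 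For the exact topological rank: the Donoso--Durand--Maass--Petite representation theorem gives rank at most $N$ from the $\mathcal{S}$-adic form with $N$ letters per level, and when $\{1,x_1,\dots,x_{N-1}\}$ is $\mathbb{Q}$-linearly independent the resulting system lies in $\mathcal{R}_{N,1,N}$, so Lemma~\ref{lem:NKL} forbids rank less than $N$.

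The main obstacle is the simultaneous arithmetic bookkeeping at each inductive step. The integers $h_{n+1}$, $k_{n+1,i}$, and $r_{n+1}$ must be chosen to satisfy divisibility relations (so that constant length is preserved and the concatenation has the right expected counts), the identity $r_{n+1}+\sum_i k_{n+1,i}=h_{n+1}/h_n$, positivity and parity constraints (so that properness can be secured by placing the distinguished suffix/prefix and pairing the occurrences of $v_{n,2}$), and the closeness conditions $|k_{n+1,i}/h_{n+1}-c_{n,i}|<\epsilon_n$; verifying that all of these can be met together, and that the resulting error bounds telescope down to earlier levels, requires a careful $\epsilon$-budget claim analogous to the one appearing in the proof of Theorem~\ref{thm:Toe}. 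Everything else — regularity, identification of $\mu$, computation of $\Gamma_\sigma$, and Borelness — then follows mechanically.
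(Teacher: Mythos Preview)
Your proposal is correct and follows essentially the same route as the paper: the same diagonal-perturbation ansatz $T^{n,j}_{n+1,i}=k_{n+1,j}+r_{n+1}\delta(i,j)$ for the occurrence counts, the same auxiliary sequence $c_{n,i}$ with the same recursion $c_{n+1,i}=r_{n+1}^{-1}(c_{n,i}-k_{n+1,i}/h_{n+1})$, the same mechanism for regularity via shared building positions, the same $\epsilon$-budget claim borrowed from the proof of Theorem~\ref{thm:Toe}, and the same rank computation via \cite{DDMP} together with Lemma~\ref{lem:NKL} on the $\mathbb{Q}$-linearly independent locus. There is nothing substantive to add.
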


\begin{lemma} The equivalence relation $F_2$ is not smooth. When $n\ge4$, $F_n$ is not hyperfinite. When $n\ge5$, $F_n$ is not treeable.
\end{lemma}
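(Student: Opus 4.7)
My plan is to identify $F_N$ on a Borel invariant subset with the orbit equivalence of a countable group action, then appeal to known lower-bound results. Let $V_N\subseteq\mathbb{R}^{N-1}$ be the Borel set of tuples $(x_1,\dots,x_{N-1})$ such that $1,x_1,\dots,x_{N-1}$ are $\mathbb{Q}$-linearly independent in $\mathbb{R}$. First I would verify that $V_N$ is $F_N$-invariant, using that the $\mathbb{Q}$-dimension of $\mathrm{span}_{\mathbb{Q}}\{1,x_1,\dots,x_{N-1}\}$ is preserved under the right action of $\mathrm{GL}(N,\mathbb{Q})$ on $\mathbb{R}^N$. Next I would identify $F_N\!\upharpoonright\!V_N$ with the orbit equivalence of the affine action of the countable group $G_{N-1}:=\mathrm{GL}(N-1,\mathbb{Q})\ltimes\mathbb{Q}^{N-1}$ on $V_N$ given by $(A,b)\cdot\vec{x}=\vec{x}A+b$. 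The key step: if $(\vec{x},1)M=(\vec{y},1)$ with $\vec{x}\in V_N$ and $M\in\mathrm{GL}(N,\mathbb{Q})$, then the last coordinate reads $\sum_{i=1}^{N-1}x_iM_{i,N}+M_{N,N}=1$, and $\mathbb{Q}$-linear independence of $\{1,x_1,\dots,x_{N-1}\}$ forces $M_{i,N}=0$ for $i<N$ and $M_{N,N}=1$. Hence $M=\bigl(\begin{smallmatrix}A&0\\b&1\end{smallmatrix}\bigr)$ for some $A\in\mathrm{GL}(N-1,\mathbb{Q})$ and $b\in\mathbb{Q}^{N-1}$, and $\vec{y}=\vec{x}A+b$. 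In particular, $F_N\!\upharpoonright\!V_N$ is a countable Borel equivalence relation, and the inclusion $V_N\hookrightarrow\mathbb{R}^{N-1}$ is a Borel reduction $F_N\!\upharpoonright\!V_N\le_B F_N$.

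For $F_2$: the action of $G_1=\mathbb{Q}^*\ltimes\mathbb{Q}$ on $V_2=\mathbb{R}\setminus\mathbb{Q}$ has every orbit dense in $\mathbb{R}$, so $F_2\!\upharpoonright\!V_2$ admits no Borel transversal and is therefore a non-smooth countable Borel equivalence relation. Combined with $F_2\!\upharpoonright\!V_2\le_B F_2$, this yields that $F_2$ is not smooth.

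For $F_N$ with $N\geq 4$ (respectively $N\geq 5$): the subgroup $\mathrm{SL}(N-1,\mathbb{Z})\le\mathrm{GL}(N-1,\mathbb{Q})$ also preserves $V_N$, and its linear action on $V_N$ induces a countable Borel sub-equivalence relation $F'_N\subseteq F_N\!\upharpoonright\!V_N$. By Poulin's \cite[Corollary 1.3]{Poulin}, when $N-1\ge 3$ (respectively $N-1\ge 4$), the orbit equivalence of $\mathrm{SL}(N-1,\mathbb{Z})\curvearrowright\mathbb{R}^{N-1}$, and hence its restriction $F'_N$ to the conull invariant Borel set $V_N$, is not hyperfinite (respectively not treeable). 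Since hyperfiniteness and treeability are inherited by Borel sub-equivalence relations of countable Borel equivalence relations (cf.\ \cite[Proposition 3.3]{JKL}), if $F_N$ were hyperfinite (respectively treeable), then so would be $F_N\!\upharpoonright\!V_N$ (via the Borel reduction above) and hence its Borel sub-equivalence relation $F'_N$, contradicting Poulin. Thus $F_N$ is not hyperfinite for $N\ge 4$ and not treeable for $N\ge 5$.

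The main obstacle is verifying that Poulin's complexity lower bounds for $\mathrm{SL}(N-1,\mathbb{Z})\curvearrowright\mathbb{R}^{N-1}$ persist upon restriction to the Borel invariant set $V_N$. This reduces to checking that $V_N$ is of full Lebesgue measure (its complement is a countable union of rational hyperplanes, hence null), so that the measure-theoretic non-hyperfiniteness and non-treeability transfer to the Borel equivalence relation $F'_N$ on $V_N$.
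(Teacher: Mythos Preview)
Your argument is correct, but it takes a longer route than necessary. The paper's proof simply observes that the linear action of $\mathrm{GL}(n-1,\mathbb{Q})$ on all of $\mathbb{R}^{n-1}$ (via $\vec{x}\mapsto \vec{x}A$) is already a Borel sub-equivalence relation of $F_n$, by embedding $A\in\mathrm{GL}(n-1,\mathbb{Q})$ as the block matrix $\left(\begin{smallmatrix}A&0\\0&1\end{smallmatrix}\right)\in\mathrm{GL}(n,\mathbb{Q})$. Poulin's result then applies directly to this sub-relation on $\mathbb{R}^{n-1}$, and closure of hyperfiniteness and treeability under sub-equivalence relations finishes the argument. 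No restriction to $V_N$, no identification with an affine action, and no measure-theoretic verification that the lower bounds persist on a conull set are needed. Your structural identification of $F_N\!\upharpoonright\!V_N$ with the orbit relation of $\mathrm{GL}(N-1,\mathbb{Q})\ltimes\mathbb{Q}^{N-1}$ is a nice observation in its own right, but it is extra machinery for this lemma.

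One small wording issue: when you write that $F_N$ hyperfinite implies $F_N\!\upharpoonright\!V_N$ hyperfinite ``via the Borel reduction above,'' this is potentially misleading, since hyperfiniteness and treeability are not in general inherited downward under Borel reducibility. What you are actually using is that $V_N$ is an $F_N$-invariant Borel subset, so the restriction inherits these properties trivially. You should phrase it that way.
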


\begin{proof}
Note that the orbit equivalence relation induced by the right action of ${\rm GL}(n-1,\mathbb{Q})$ on $\mathbb{R}^{n-1}$ such that $M\cdot(x_1,\cdots,x_{n-1})=(x_1,\cdots,x_{n-1})M$ is a subrelation of $F_n$. When $n=2$, this action induces non-smooth orbit equivalence relation. By a recent result of Poulin \cite[Corollary 1.3]{Poulin}, when $n\ge4$, this action induces a non-hyperfinite equivalence relation, and when $n\ge5$, it induces a non-treeable equivalence relation. Since a subrelation of a smooth, hyperfinite or treeable countable Borel equivalence relation is still respectively smooth, hyperfinite or treeable, the conclusions of the lemma hold.
\end{proof}

We have the following immediate corollary.

\begin{corollary} For $n\geq 2$, $R_n$ is not smooth.  For $n\geq 4$, $R_n$ is not virtually hyperfinite. For $n\geq 5$, $R_n$ is not virtually treeable.
\end{corollary}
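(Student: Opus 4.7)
The plan is to obtain the corollary as a straightforward consequence of Theorem~\ref{thm:FN}, which gives $F_n \leq_B R_n$ for every $n \geq 2$, combined with the preceding lemma on the complexity of the $F_n$'s. The pattern for all three claims is identical: if $R_n$ enjoyed the property in question (smooth, virtually hyperfinite, virtually treeable), then so would $F_n$ by composing Borel reductions, and since $F_n$ is itself a countable Borel equivalence relation (each $F_n$-class is contained in the orbit of a countable group action, hence countable), the ``virtual'' qualifier can be dropped, producing a contradiction with the lemma.

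For non-smoothness, I would first dispatch the base case $n=2$ directly: by Theorem~\ref{thm:FN} we have $F_2 \leq_B R_2$, and by the lemma $F_2$ is not smooth, so $R_2$ is not smooth since smoothness is downward closed under $\leq_B$. For $n \geq 3$ I would first observe that $F_2 \leq_B F_n$ via the Borel embedding $x \mapsto (x,0,\dots,0)$ from $\mathbb{R}$ into $\mathbb{R}^{n-1}$. The verification that two such images are $F_n$-related if and only if their first coordinates are $F_2$-related is a short direct check from the definition: any $M \in \mbox{\rm GL}(n,\mathbb{Q})$ witnessing the equivalence of $(x,0,\dots,0,1)$ and $(y,0,\dots,0,1)$ can be replaced by a block-diagonal matrix whose nontrivial block is a $2\times 2$ matrix in $\mbox{\rm GL}(2,\mathbb{Q})$ acting on the first and last coordinates. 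Chaining $F_2 \leq_B F_n \leq_B R_n$ then forces $R_n$ to be non-smooth.

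For non-virtual-hyperfiniteness with $n \geq 4$, suppose toward a contradiction that $R_n \leq_B E$ for some hyperfinite countable Borel equivalence relation $E$. Then $F_n \leq_B R_n \leq_B E \leq_B E_0$, which makes $F_n$ hyperfinite, contradicting the lemma. The non-virtual-treeability case for $n \geq 5$ proceeds identically: a witness $R_n \leq_B E$ with $E$ countable treeable gives $F_n \leq_B E$, and downward closure of treeability among countable Borel equivalence relations (a standard fact going back to \cite{JKL}) forces $F_n$ to be treeable, again contradicting the lemma.

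I do not foresee any genuine obstacle; the corollary is essentially a bookkeeping step that packages Theorem~\ref{thm:FN} with the preceding lemma. The one point needing a brief verification is the embedding $F_2 \leq_B F_n$ for $n \geq 3$, which handles the non-smoothness claim beyond the base case, but this is immediate from the definition of $F_n$. All the substantive work lies in Theorem~\ref{thm:FN} (the Toeplitz construction giving the lower bound) and in the lemma's appeal to Poulin's result \cite{Poulin}.
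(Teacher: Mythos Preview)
Your proposal is correct and follows essentially the same approach as the paper, which treats the corollary as an immediate consequence of Theorem~\ref{thm:FN} ($F_n\leq_B R_n$) together with the preceding lemma on the complexity of $F_n$. The only addition is your explicit verification of $F_2\leq_B F_n$ to cover non-smoothness for $n=3$, a case the paper leaves implicit (the lemma only asserts non-smoothness for $F_2$); your argument for this embedding is fine, with the irrational case forcing the relevant $2\times 2$ submatrix to be invertible and the rational case handled by noting all rationals lie in a single $F_2$-class.
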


\subsection{Further remarks}

In Lemma~\ref{lem:MSN} we showed that any minimal subshift of topological rank $n\geq 2$ has at most $n$ ergodic measures.  In our proof of Theorem~\ref{thm:FN}, we constructed uniquely ergodic minimal subshifts of topological rank $n$ for any $n\geq 2$. Thus it is possible for a minimal subshift of topological rank $n\geq 2$ to have strictly less than $n$ ergodic measures. 

Our proof of Theorem~\ref{thm:FN} also yields the following.

\begin{corollary} For any $N\geq 2$, there exists a regular $\mathsf{A}_N$-Toeplitz subshift of topological rank exactly $N$. In particular, for any $N\geq 2$, there exists a uniquely ergodic minimal subshift of topological rank exactly $N$.
\end{corollary}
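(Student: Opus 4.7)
The plan is to directly invoke the construction carried out in the proof of Theorem~\ref{thm:FN}, with a specific choice of parameter. Fix $N\geq 2$ and choose real numbers $x_1,\dots,x_{N-1}$ such that the set $\{1,x_1,\dots,x_{N-1}\}$ is $\mathbb{Q}$-linearly independent (for instance, any $N-1$ algebraically independent transcendentals, or simply $x_i=\sqrt{p_i}$ for distinct primes $p_1,\dots,p_{N-1}$). Feed these $(x_1,\dots,x_{N-1})$ into the construction of the generating sequence ${\bf v}=(v_{n,i})_{n\in\mathbb{N},\,1\le i\le N}$.

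By the verification already given in the proof of Theorem~\ref{thm:FN}, the subshift $(X_{\bf v},\sigma)$ is a regular $\mathsf{A}_N$-Toeplitz subshift. In particular, by the Jacobs--Keane theorem, it is uniquely ergodic, and
$$\Gamma_\sigma=\bigl\{(q_1,\dots,q_N)(x_1,\dots,x_{N-1},1)^T\colon (q_1,\dots,q_N)\in\mathbb{Q}^N\bigr\}.$$
Since $\{1,x_1,\dots,x_{N-1}\}$ is $\mathbb{Q}$-linearly independent, $\Gamma_\sigma$ has a maximal $\mathbb{Q}$-linearly independent subset of size exactly $N$, and hence $(X_{\bf v},\sigma)\in\mathcal{R}_{N,1,N}$ in the notation used earlier (once the topological rank is known to be $N$).

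By \cite[Proposition 4.5]{DDMP}, there is an ordered Bratteli diagram of rank at most $N$ whose Bratteli--Vershik system is topologically conjugate to $(X_{\bf v},\sigma)$, so the topological rank of $(X_{\bf v},\sigma)$ is at most $N$. Conversely, by Lemma~\ref{lem:NKL}, if the topological rank were some $N'<N$, then the maximal size of a $\mathbb{Q}$-linearly independent subset of $\Gamma_\sigma$ would be bounded by $N'<N$, contradicting our choice of parameter. Hence the topological rank is exactly $N$, proving the first assertion. The second assertion follows immediately since $(X_{\bf v},\sigma)$ is uniquely ergodic.

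The proof is essentially a bookkeeping exercise over Theorem~\ref{thm:FN}; the only non-routine step is confirming that a suitable parameter $(x_1,\dots,x_{N-1})$ exists, which is trivial over $\mathbb{R}$. The main conceptual obstacle (already resolved in the proof of Theorem~\ref{thm:FN}) was producing the generating sequence with constant length, properness, primitivity, recognizability, and the regularity estimate in inductive hypothesis~(3); here we merely harvest its output.
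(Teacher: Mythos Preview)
Your proposal is correct and follows essentially the same approach as the paper: choose $(x_1,\dots,x_{N-1})$ with $\{1,x_1,\dots,x_{N-1}\}$ $\mathbb{Q}$-linearly independent, invoke \cite[Proposition~4.5]{DDMP} for the upper bound on topological rank, and use Lemma~\ref{lem:NKL} for the lower bound. The only cosmetic difference is that the paper records this argument inline just before stating Theorem~\ref{thm:FN} rather than as a separate proof of the corollary.
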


\end{document}